\documentclass{article}
\usepackage{amsmath, amssymb, latexsym}
\usepackage{amsopn,amsfonts, amsthm}
\usepackage[T1]{fontenc}

\newcommand{\zero}{\varepsilon^{\bullet}}
\DeclareMathOperator{\sgn}{sgn}

\DeclareMathOperator{\limes}{lim}
\newtheorem{theorem}{Theorem}

\newtheorem{corollary}[theorem]{Corollary}

\newtheorem{proposition}[theorem]{Proposition}

\theoremstyle{definition}

\newtheorem{definition}[theorem]{Definition}
\newtheorem{example}[theorem]{Example}

\newtheorem{problem}[theorem]{Problem}
\newtheorem{remark}[theorem]{Remark}

\title{The minimizing vector theorem\\ in symmetrized max-plus algebra}
\author{Cenap \"Ozel\and Artur Pi\k{e}kosz \and Eliza Wajch\and Hanifa Zekraoui\\
Department of Mathematics, King Abdulaziz University, \\
Jeddah, Kingdom of Saudi Arabia\\
Institute of Mathematics, Cracow University of Technology,\\
Cracow, Poland\\ 
Department of Mathematics and Physics,\\ University of Natural Sciences and Humanities in Siedlce,\\  
Siedlce, Poland \\
Department of Mathematics, Larbi Ben M'hidi University,\\
Oum El Bouaghi, Algeria\\}

\begin{document}

\maketitle

\begin{abstract} 
Assuming \textbf{ZF} and its consistency, we study some topological and geometrical properties of the
symmetrized max-plus algebra in the absence of the axiom of choice in order to discuss the minimizing vector
theorem for finite products of copies of the symmetrized max-plus algebra.  Several relevant statements that follow from the axiom of countable choice restricted to sequences of subsets of the real line are shown. Among them, it is proved that if all simultaneously complete and connected subspaces of the plane are closed, then the real line is sequential. A brief discussion about semidenrites is included. Older known proofs in \textbf{ZFC} of several basic facts relevant to proximinal and Chebyshev sets in metric spaces are replaced by new proofs in \textbf{ZF}. It is proved that a nonempty subset $C$ of the symmetrized max-plus algebra is Chebyshev in this algebra if and only if $C$ is simultaneously closed and connected. An application of it to a version of the minimizing vector theorem for finite products of the symmetrized max-plus algebra is shown. Open problems concerning some statements independent of \textbf{ZF} and other statements relevant to Chebyshev sets are posed.

\textbf{Key words: } symmetrized max-plus algebra, metric, complete metric, Cantor complete metric, proximinal set, Chebyshev set, convexity, geometric convexity,  minimizing vector theorem, semidendrite, \textbf{ZF}, axiom of countable choice for the real line,  independence results.

\textbf{MSC[2010]: } Primary: 15A80, 16Y60. Secondary: 03E25, 54F15.
\end{abstract}

\section{Introduction}

The Max-Plus Algebra (shortly: MPA), known also as the max-plus dioid (\cite{GM}) or  the tropical semiring (\cite{S}),  is the idempotent semiring (see Definition 1 in \cite{MP}) over the union of the set of all real numbers and one-point set consisting of $\varepsilon
=-\infty $, equipped with the maximum (as the addition) and the addition 
(as the multiplication). The zero and the unit of this semiring are respectively 
$\varepsilon $ and $0$. 

Despite the remarkable analogy between these two basic operations in MPA and 
the conventional algebra, the principal difference
reflects in  missing of inverse elements related to the first operation:
$\max$. A partial remedy is the Symmetrized Max-Plus Algebra (shortly: SMPA) which is an extension of MPA built by a method similar to that used in extending a given set of all natural numbers satisfying Peano's postulates to a set that can be called the set of all integers. 

Compared to
conventional linear algebra and linear system theory, the linear max-plus algebra is far
from fully developed, even though many papers and textbooks were
written on the topics of matrix calculations, among them, relevant to different types of
decomposition, spectral theory, linear systems and linear
optimization. Some important papers dealing with these subjects also were
written in SMPA; for instance, let us refer to \cite{SM}. 

In this paper, we give some topological and geometrical properties of SMPA in order to start a deeper investigation of the minimizing vector theorem in products of SMPA and point out new open problems relevant to Chebyshev sets. 

Although most mathematicians seem to assume \textbf{ZFC} as the axiomatic foundation, we realize that the axiom of choice (\textbf{AC}) is not needed to obtain the main results of our work. Therefore, the basic set-theoretic assumption of this study is \textbf{ZF}, formulated in \cite{Ku}, without any form of the axiom of choice. If it occurs necessary, we also use \textbf{AC} or another axiom which is a weaker form of \textbf{AC}; however, we clearly inform which statements in our article are independent of \textbf{ZF}.  We use the notation of \cite{Her} for \textbf{ZF} and axioms relevant to the axiom of choice. All topological notions not defined here can be found in \cite{ES}, \cite{Her} and \cite{M}. The book \cite{HoR} is the most useful source of information about validity and falsity in models of \textbf{ZF}. 

Our paper is organized as follows. Section 2 concerns basic properties of $d$-proximinal and $d$-Chebyshev sets in an arbitrary metric space $(X,d)$. New proofs in \textbf{ZF} to several facts on $d$-proximinality are shown. The proofs are more subtle than the already known ones in \textbf{ZFC}.   
We give a brief introduction about
calculation in MPA and the construction of max-plus algebra of pairs (MPAP)
 in subsection 3.1. In subsection 3.2, we give a short presentation of the construction of SMPA from MPAP and some remarks about computation in SMPA. In section 4, we define equivalent Euclidean and inner metrics in the set $\mathbb{S}$ of all points of SMPA. Equipped with the natural topology induced by the inner metric, the space $\mathbb{S}$ is a semidendrite which is embeddable in the complex plane $\mathbb{C}$. Some properties of semidendrites in \textbf{ZF} are shown. Section 5 concerns finite products $\mathbb{S}^n$ of $\mathbb{S}$. Natural metrics in $\mathbb{S}^n$ are considered. A notion of a geometric segment in $\mathbb{S}^n$ is introduced and it is shown that geometric segments can be identified with suitable broken lines in $\mathbb{C}^n$. Section 6 is about completeness and Cantor-completeness of the metrics in $\mathbb{S}^n$ defined in section 5. A few set-theoretic open problems are posed. New equivalents of the independent of \textbf{ZF} statement that $\mathbb{R}$ is sequential are found. In section 7, we introduce the structure of a semimodule in $\mathbb{S}^n$. Section 8 is devoted to distinct concepts of convex sets in $\mathbb{S}^n$. The final section 9 is about the minimizing vector theorem in $\mathbb{S}^n$. We give here a complete, satisfactory characterization of Chebyshev sets in $\mathbb{S}$. We also investigate finite products of Chebyshev sets in $\mathbb{S}$. We pose open problems on the minimizing vector theorem. Our paper is a preparation to deeper investigations of Chebyshev sets in $\mathbb{S}^n$ in the future.
 
 We shall use the set $\omega$ of all finite ordinal numbers of von Neumann as the set of all non-negative integers (see \cite{Ku}). 

\section{Chebyshev sets in metric spaces-preliminaries}
The aim of this section is to establish terminology and basic properties of Chebyshev sets in an arbitrary metric space. An extensive bibliography on Chebyshev sets in normed spaces which includes also survey articles is given in \cite{FM}, so it is not necessary to repeat this bibliography here.

\subsection{$d$-proximinal and $d$-Chebyshev sets}

Throughout this section, we assume that $(X, d)$ is a metric space. As usual, for $x\in X$ and a positive real number $r$, we denote $B_d(x, r)=\{ y\in X: d(x, y)<r\}$ and $\bar{B}_d(x, r)=\{ y\in X: d(x, y)\leq r\}$. The symbol $\tau(d)$ stands for the topology induced by $d$.  For simplicity,  we denote by $X$ the topological space $(X, \tau(d))$. If $Y$ is a subset of $X$, we shall consider $Y$ as a metric subspace of $(X, d)$ and as a topological subspace of $(X, \tau(d))$. The following definition can be regarded as an adaptation of Definition 2.3 of \cite{FM} to arbitrary metric spaces.  If it is not stated otherwise, $\mathbb{R}^n$ will stand for the $n$-dimensional Euclidean space equipped with the Euclidean metric induced by the standard inner product in $\mathbb{R}^n$.

\begin{definition} Let $K\subseteq X$.
\begin{enumerate}
\item[(i)] We define a set-valued mapping $P_{d,K}: X\to \mathcal{P}(X)$ by $P_{d, K}(x)=\{ y\in K: d(x, y)=d(x, K)\}$ if $K\neq\emptyset$ and by $P_{d, K}(x)=\emptyset$ if $K=\emptyset$, where $x\in X$.
\item[(ii)] We say that a $d$-\textbf{nearest point} to $x\in X$ in $K$ or, equivalently, \textbf{ the best $d$-approximation} of $x$ in $K$  is a point which belongs to the set $P_{d,K}(x)$.
\item[(iii)] The set $K$ is called \textbf{$d$-proximinal} in $X$ if $P_{d, K}(x)\neq\emptyset$ for all $x\in X$.
\item[(iv)] The set $K$ is called a \textbf{$d$-Chebyshev set} in $X$ if $P_{d,K}(x)$ is a singleton for each $x\in X$.
\item[(v)] If $x\in X$ is such that $P_{d,K}(x)$ is a singleton, we denote by $p_{d,K}(x)$ the unique point of $P_{d,K}(x)$ and we call $p_{d,K}(x)$ the \textbf{$d$-projection} of the point $x$ onto $K$. 
\item[(vi)] If $K$ is a $d$-Chebyshev set in $X$, then the mapping $p_{d, K}:X\to K$ is called the \textbf{$d$-projection of $X$ onto $K$}. If this does not lead to misunderstanding, $d$-projections will be called \textbf{metric projections}. 
\end{enumerate}
\end{definition} 

\begin{definition} Let $Y$ be a subset of the metric space $(X, d)$ and let $K\subseteq Y$. We say that $K$ is \textbf{$d$-proximinal in $Y$} if, for each $y\in Y$, the set $P_{d,K}(y)$ is nonempty. If, for each $y\in Y$, the set $P_{d,K}(y)$ is a singleton, we say that $K$ is \textbf{$d$-Chebyshev in $Y$}.
\end{definition}

A set which is $d$-Chebyshev in a subspace $Y$ of the metric space $(X, d)$ need not be $d$-Chebyshev in $X$. (Take a non-Chebyshev subset $Y$ in $\mathbb{R}^2$. Then 
$Y$ is Chebyshev in itself.)

Although it has been written a lot about Chebyshev sets in normed vector spaces, in general, it is not known much about the nature of $d$-Chebyshev sets for an arbitrary metric $d$; however, the concepts of proximinal and Chebyshev sets in metric spaces, especially in strictly convex metric spaces,  were investigated in the past, for instance, in \cite{ANT}, \cite{N} and \cite{NS}.  Unfortunately, even in the case when $d$ is induced by the inner product of a Hilbert space over the field $\mathbb{R}$,  versions of the axiom of choice have been involved in proofs of some properties of $d$-Chebyshev sets, while it is more desirable to investigate whether a statement is provable in $\mathbf{ZF}$ or it can fail in a model of $\mathbf{ZF}$. For instance, the statement that $\mathbb{R}^n$ is a Fr\'echet space was used in the proof on page 363 of \cite{Web} that every Chebyshev set in the $n$-dimensional Euclidean space $\mathbb{R}^n$ is closed. However, in view of Theorem 4.54 of \cite{Her}, that $\mathbb{R}$ is Fr\'echet is an equivalent of $\mathbf{CC}(\mathbb{R})$ which asserts that every nonempty countable collection of nonempty subsets of $\mathbb{R}$ has a choice function (see Definition 2.9 (1) of $\mathbf{CC}(\mathbb{R})$ in \cite{Her} and Form 94 in \cite{HoR}). In Cohen's original model of $\mathbf{ZF}$ (see model $\mathcal{M}1$ in \cite{HoR}), $\mathbb{R}$ is not Fr\'echet (see Form 73 of \cite{HoR}). Therefore, the proof in \cite{Web} that Chebyshev sets in $\mathbb{R}^n$ are closed is not a proof in $\mathbf{ZF}$ but it is a too complicated proof in $\mathbf{ZF}+\mathbf{CC}(\mathbb{R})$. If we replace the norm by the metric $d$ in Proposition 2.9 of \cite{FM} and in its proof in \cite{FM}, then we can see that the following more general fact than Proposition 2.9 of \cite{FM} has a trivially simple proof in $\mathbf{ZF}$ which is included below to show its simplicity:

\begin{proposition} Each $d$-proximinal set in a nonempty metric space $(X,d)$ is nonempty and closed in $(X, d)$. In particular, each $d$-Chebyshev set in the metric space $(X, d)$ is closed in $(X,d)$.
\end{proposition}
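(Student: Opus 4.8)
The plan is to prove both assertions directly, keeping every step inside \textbf{ZF} by working with the neighbourhood characterization of the closure and the infimum characterization of the distance to a set, thereby avoiding any appeal to sequences -- which is precisely where the older \textbf{ZFC} argument of \cite{Web} smuggled in $\mathbf{CC}(\mathbb{R})$.

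First I would dispose of nonemptiness. By Definition 1(i), if $K=\emptyset$ then $P_{d,K}(x)=\emptyset$ for every $x\in X$; hence, since $X\neq\emptyset$, a $d$-proximinal $K$ must be nonempty, because $P_{d,K}(x)\neq\emptyset$ holds for some (indeed every) $x\in X$. No form of choice is involved.

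For closedness, I would fix an arbitrary point $x\in\overline{K}$ and show $x\in K$. Using the purely topological definition of the closure via neighbourhoods, for every real $\varepsilon>0$ the ball $B_d(x,\varepsilon)$ meets $K$, so there is a point of $K$ within distance $\varepsilon$ of $x$; consequently $d(x,K)\leq\varepsilon$ for all $\varepsilon>0$, whence $d(x,K)=0$. This step only asserts that the relevant infimum equals $0$ and requires no selection of a minimizing sequence, so it is valid in \textbf{ZF}. Now I invoke $d$-proximinality: the set $P_{d,K}(x)$ is nonempty, so by a single existential instantiation -- legitimate in \textbf{ZF}, since only one nonempty set is involved and not an infinite family -- I may fix $y_0\in P_{d,K}(x)\subseteq K$. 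Then $d(x,y_0)=d(x,K)=0$, and since $d$ is a metric this forces $x=y_0\in K$. Therefore $\overline{K}\subseteq K$ and $K$ is closed. The ``in particular'' clause is then immediate: a $d$-Chebyshev set has each $P_{d,K}(x)$ a singleton, hence nonempty, so it is $d$-proximinal and the preceding conclusion applies.

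The main obstacle here is not depth but vigilance. The only thing that could silently break the \textbf{ZF} character of the argument is an inadvertent recourse to sequential closure or to a chosen minimizing sequence, both of which would reintroduce a countable-choice assumption on $\mathbb{R}$. I therefore expect the one point deserving explicit justification to be the claim that $d(x,K)=0$ for $x\in\overline{K}$ rests solely on the neighbourhood definition of closure together with the definition of the infimum, and that picking the single nearest point $y_0$ is an ordinary existential instantiation rather than a disguised instance of $\mathbf{CC}(\mathbb{R})$.
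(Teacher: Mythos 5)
Your proof is correct and follows essentially the same route as the paper's: take $x$ in the closure of $K$, note $d(x,K)=0$, pick $a\in P_{d,K}(x)$ by proximinality, and conclude $x=a\in K$. The paper's version is just more terse, leaving implicit the \textbf{ZF}-vigilance points (neighbourhood definition of closure, single existential instantiation) that you spell out.
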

\begin{proof} Let $A$ be a $d$-proximinal set in $X$. It is obvious that $A\neq\emptyset$ because $X\neq\emptyset$. Let $x\in\text{cl}_d(A)$ and $a\in P_{d,A}(x)$. Then $d(x, a)=d(x, A)=0$, so $x\in A$ because $x=a$. Therefore, $A=\text{cl}_d(A)$. 
\end{proof}

\begin{definition} The  \textbf{$d$-graph} of a set $A\subseteq X$ is the set 
$$\text{Gr}_d(A)=\bigcup_{x\in X}[\{x\}\times P_{d, A}(x)].$$
\end{definition}

\begin{proposition} The $d$-graph of a closed subset of $X$ is closed in $X\times X$.
\end{proposition}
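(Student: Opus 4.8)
The plan is to avoid convergent sequences entirely---since we are working in \textbf{ZF}, where $X\times X$ need not be sequential and arguing with sequences of pairs $(x_n,y_n)$ would silently invoke a form of countable choice---and instead to exhibit $\text{Gr}_d(A)$ as the intersection of two manifestly closed sets. First I would dispose of the trivial case $A=\emptyset$: here $P_{d,A}(x)=\emptyset$ for every $x\in X$, so $\text{Gr}_d(A)=\emptyset$, which is closed. Thus I may assume $A\neq\emptyset$, in which case $d(x,A)=\inf_{a\in A}d(x,a)$ is a well-defined nonnegative real number for each $x\in X$, the relevant infimum existing by the order completeness of $\mathbb{R}$, which requires no choice.

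The key observation is the identity
\[
\text{Gr}_d(A)=(X\times A)\cap g^{-1}(\{0\}),
\]
where $g\colon X\times X\to\mathbb{R}$ is defined by $g(x,y)=d(x,y)-d(x,A)$. Indeed, $(x,y)\in\text{Gr}_d(A)$ means precisely that $y\in A$ and $d(x,y)=d(x,A)$, that is, $(x,y)\in X\times A$ and $g(x,y)=0$. Next I would check that $g$ is continuous within \textbf{ZF}: the map $(x,y)\mapsto d(x,y)$ is continuous on $X\times X$ by the triangle inequality, while the map $x\mapsto d(x,A)$ is $1$-Lipschitz, since for all $x,x'\in X$ and every $a\in A$ we have $d(x',a)\le d(x',x)+d(x,a)$, and taking the infimum over $a\in A$ on both sides gives $d(x',A)\le d(x',x)+d(x,A)$, whence $|d(x',A)-d(x,A)|\le d(x',x)$ by symmetry. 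Being a difference of continuous functions (the second composed with the projection onto the first coordinate), $g$ is continuous.

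Finally I would assemble the conclusion: $g^{-1}(\{0\})$ is closed in $X\times X$ as the preimage of the closed set $\{0\}\subseteq\mathbb{R}$ under the continuous map $g$; the set $X\times A$ is closed in $X\times X$ because $A$ is closed in $X$; and an intersection of two closed sets is closed. Hence $\text{Gr}_d(A)$ is closed, as required. I do not anticipate a genuine obstacle in this route---everything reduces to continuity of the distance-to-$A$ function and to preimages of closed sets---so the main point worth flagging is not a hard step but a pitfall to steer around, namely the seductive sequential argument that would quietly leave \textbf{ZF}; the continuous-function formulation above is exactly what keeps the proof choice-free.
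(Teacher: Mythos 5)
Your proof is correct, works entirely in $\mathbf{ZF}$, and takes a genuinely different route from the paper's. You exhibit $\text{Gr}_d(A)$ as $(X\times A)\cap g^{-1}(\{0\})$, where $g(x,y)=d(x,y)-d(x,A)$, and then invoke the continuity of $g$ together with the facts that preimages of closed sets under continuous maps are closed and that finite intersections of closed sets are closed. The paper instead argues directly at the level of neighbourhoods: given $(x,y)\notin\text{Gr}_d(K)$, either $y\notin K$, in which case $X\times(X\setminus K)$ is a disjoint neighbourhood, or $y\in K$ and $d(x,y)>d(x,K)$, in which case one picks $r$ with $3r<d(x,y)-d(x,K)$ and shows by a chain of triangle inequalities that $B_d(x,r)\times B_d(y,r)$ misses the graph. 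At bottom the two arguments rest on the same estimate: the paper's $3r$ computation is precisely the $1$-Lipschitz continuity of $d(\cdot,K)$ and the continuity of $d$ unwound inline, without ever naming the function $g$. Your formulation buys modularity --- the continuity of the distance-to-set function is a reusable lemma, and the structure of the proof (closed set equals intersection of two visibly closed sets) is transparent --- whereas the paper's version is self-contained and needs nothing beyond the definition of an open set in a metric space. Your cautionary remark about avoiding sequential arguments is well taken and is exactly the concern the paper raises when criticizing the older $\mathbf{ZFC}$ proofs of closedness of Chebyshev sets, which establish only sequential closedness.
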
 
\begin{proof} Let $K$ be a closed subset of $X$. If $K=\emptyset$, then $\text{Gr}_d(K)=\emptyset$. Assume that $K\neq\emptyset$. Let $(x,y)\in (X\times X)\setminus \text{Gr}_d(K)$. Then $y\notin P_{d,K}(x)$. If $y\notin K$, then $U=X\times (X\setminus K)$ is a neighbourhood of $(x,y)$ such that $U\cap\text{Gr}_d(K)=\emptyset$. Now, suppose  that $y\in K$. Then $d(x,y)>d(x, K)$ because $y\notin P_{d,K}(x)$. Let $r$ be a positive real number such that $3r<d(x,y)-d(x,K)$. Put $V=B_d(x, r)\times B_d(y, r)$. Suppose that $(z,t)\in V\cap\text{Gr}_d(K)$. Then $t\in P_{d,K}(z)$, so $d(z,t)=d(z, K)$. Moreover, $d(x,y)\leq d(x,z)+d(z,t)+d(t,y)<d(z,K)+2r\leq d(z,x)+d(x,K)+2r\leq d(x,K)+3r$.This gives that $d(x,y)-d(x, K)<3r$ which is impossible. Hence $V\cap\text{Gr}_d(K)=\emptyset$. In consequence, $(X\times X)\setminus\text{Gr}_d(K)$ is open in $X\times X$.
\end{proof}

Propositions 3 and 5 taken together imply the following theorem:

\begin{theorem} If $K$ is a $d$-proximinal subset of $X$ then the $d$-graph of $K$ is closed in $X\times X$. In particular, if $K$ is a $d$-Chebyshev set in $X$, then the $d$-projection of $X$ onto $K$ has a closed graph.
\end{theorem}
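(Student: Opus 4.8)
The plan is to deduce the theorem directly from Propositions 3 and 5, exactly as the sentence ``Propositions 3 and 5 taken together imply the following theorem'' already signals; all the substantive work has been done in those two propositions. First I would observe that a $d$-proximinal subset $K$ of $X$ satisfies the hypothesis of Proposition 3 verbatim, since $d$-proximinality asserts precisely that $P_{d,K}(x)\neq\emptyset$ for every $x\in X$. Hence Proposition 3 gives that $K$ is closed in $(X,\tau(d))$.

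Having established that $K$ is closed, I would invoke Proposition 5, which states that the $d$-graph of a closed subset of $X$ is closed in $X\times X$. Applying it to $K$ yields at once that $\text{Gr}_d(K)$ is closed in $X\times X$, which is the first assertion of the theorem.

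For the \emph{in particular} clause, I would first note that every $d$-Chebyshev set is $d$-proximinal, because a singleton is nonempty; thus the first part already gives that $\text{Gr}_d(K)$ is closed. It then remains only to recognize $\text{Gr}_d(K)$ as the graph of the metric projection $p_{d,K}\colon X\to K$. Since $K$ is $d$-Chebyshev, we have $P_{d,K}(x)=\{p_{d,K}(x)\}$ for each $x\in X$, so the definition of the $d$-graph collapses to $\text{Gr}_d(K)=\bigcup_{x\in X}(\{x\}\times\{p_{d,K}(x)\})=\{(x,p_{d,K}(x)):x\in X\}$, which is exactly the graph of $p_{d,K}$. The main (and essentially only) obstacle here is this final identification, which is nothing more than a routine unwinding of Definitions 1 and 4. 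I would also remark that, because Propositions 3 and 5 are both proved in \textbf{ZF}, the entire deduction stays within \textbf{ZF} and requires no form of the axiom of choice.
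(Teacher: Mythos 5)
Your proof is correct and follows exactly the route the paper intends: Proposition 3 gives closedness of the $d$-proximinal set $K$, Proposition 5 then gives closedness of $\text{Gr}_d(K)$, and the \emph{in particular} clause follows since a $d$-Chebyshev set is $d$-proximinal and its $d$-graph is precisely the graph of $p_{d,K}$. The paper itself offers no further argument beyond citing Propositions 3 and 5, so your write-up matches (and spells out) the paper's own reasoning.
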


Theorem 6 is an extension and generalization of Corollary 2.19 from \cite{FM} and, for instance, of Theorem 4 from \cite{N}. However, the proofs to Corollary 2.19 in \cite{FM} and to Theorem 4 in \cite{N} are not proofs in $\mathbf{ZF}$ because, in $\mathbf{ZF}$,  only sequential closedness of Chebyshev sets was shown in \cite{FM} and \cite{N}, while it is known that a sequentially closed subset of a metric space may fail to be closed in a model of $\mathbf{ZF}$. Namely, as in Theorem 4.55 of \cite{Her}, we denote by 
$\mathbf{CC}(c\mathbb{R})$ the following statement: Every sequence of nonempty complete metric subspaces of $\mathbb{R}$ has a choice function. 
In view of Theorem 4.55 of \cite{Her}, that $\mathbb{R}$ is sequential is an equivalent of $\mathbf{CC}(c\mathbb{R})$. 

\begin{remark} In Cohen's original model (see model $\mathcal{M}1$ in \cite{HoR}), for each natural number $n\ge 1$, the space $\mathbb{R}^n$ is not sequential because it contains non-separable subspaces (see Form 92 of \cite{HoR}). 
\end{remark}
 
It is known that $\mathbf{CC}(\mathbb{R})$ is strictly stronger than $\mathbf{CC}(c\mathbb{R})$ in $\mathbf{ZF}$ (see pages 74-77 of \cite{Her}).

\begin{definition} Let $K$ be a nonempty subset of $X$. We say that the mapping $P_{d,K}: X\to P(X)$ is \textbf{$d$-continuous at $x\in X$} if $P_{d, K}(x)$ is  a singleton and the following condition is satisfied:
$$\forall_{\varepsilon\in(0;+\infty)}\exists_{\delta\in (0;+\infty)}\forall_{y, z \in X}[(d(x,y)<\delta \wedge z\in P_{d,K}(y))\Rightarrow d(z, p_{d,K}(x))<\varepsilon].$$

\end{definition} 

Definition 8 is a modification of Definition 2.23 of \cite{FM}. In the case when $K$ is a nonempty $d$-Chebyshev set in $X$, then the metric projection $p_{d,K}$ of $X$ onto $K$ is continuous if and only if $P_{d,K}$ is $d$-continuous  at each $x\in X$ in the sense of our Definition 8, while $p_{d,K}$ is sequentially continuous if and only if $P_{d,K}$ is continuous  at each $x\in X$ in the sense of Definition 2.23 of \cite{FM} under the additional assumption that $X$ is a normed vector space and $d$ is induced by the norm of $X$.  In general, given metric spaces $(X, d)$ and $(Y, \rho)$, the sequential continuity of a mapping $f: X\to Y$ is not  equivalent to the continuity of $f$ in $\mathbf{ZF}$ (see Theorem 4.54 of \cite{Her} and Form 94 in \cite{HoR} which fails in model $\mathcal{M}1$ of \cite{HoR}). It can de deduced from the proof in $\mathbf{ZF}$ to Theorem 3.15 in \cite{Her} that the following theorem holds true in $\mathbf{ZF}$:

\begin{theorem} If $X$ is a separable metrizable space and $Y$ is a metrizable space, then a mapping $f: X\to Y$ is continuous if and only if $f$ is sequentially continuous.
\end{theorem}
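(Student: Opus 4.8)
The plan is to prove the nontrivial implication (sequential continuity $\Rightarrow$ continuity) directly in $\mathbf{ZF}$, exploiting separability to replace the choice-dependent construction of a ``bad sequence'' by a canonically definable one. The reverse implication, that continuity implies sequential continuity, holds in $\mathbf{ZF}$ for arbitrary topological spaces and needs no separability: if $f$ is continuous and $x_n\to x$, then for every neighbourhood $V$ of $f(x)$ the set $f^{-1}(V)$ is a neighbourhood of $x$, so $x_n$ is eventually in $f^{-1}(V)$ and hence $f(x_n)$ is eventually in $V$; I would dispose of this in one line.

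For the hard direction I would first fix, once and for all, a metric $d$ inducing the topology of $X$, a metric $\rho$ inducing the topology of $Y$, and an enumeration $(a_k)_{k\in\omega}$ of a countable dense subset of $X$; each of these is a single selection and so is harmless in $\mathbf{ZF}$. The whole point of the enumeration is that it well-orders the dense set, so that ``the least index $k$ with a given property'' is a legitimate, choice-free definition. As a general principle I would record that for every $x\in X$ the canonical sequence $a_{k(x,n)}$, where $k(x,n)$ is the least $k$ with $d(x,a_k)<\tfrac1n$, converges to $x$, whence sequential continuity gives $\rho(f(x),f(a_{k(x,n)}))\to 0$.

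Next I would argue by contradiction, assuming $f$ is sequentially continuous but discontinuous at some $x$. Then there is $\varepsilon>0$ such that every ball $B_d(x,\delta)$ contains a point $y$ with $\rho(f(x),f(y))\ge\varepsilon$. The main obstacle is exactly here: such witnesses $y$ exist but cannot be selected simultaneously over all $\delta$ without a form of countable choice, and moreover they need not lie in the dense set. I would overcome this by transferring the bad behaviour to the dense set: given $\delta>0$, fix one such $y$, apply the general principle at $y$ to get $a_{k(y,m)}\to y$ with $f(a_{k(y,m)})\to f(y)$, and choose $m$ large enough that $d(x,a_{k(y,m)})<\delta$ and $\rho(f(x),f(a_{k(y,m)}))\ge\varepsilon/2$. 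This establishes the purely existential statement that \emph{every} ball about $x$ contains a dense point whose $f$-value is $\varepsilon/2$-far from $f(x)$ --- a statement proved for each $\delta$ by instantiating a single witness, so no choice is involved.

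Finally I would convert this existential statement into an honest sequence without choice: for each $n$ let $k_n$ be the \emph{least} $k$ with $d(x,a_k)<\tfrac1n$ and $\rho(f(x),f(a_k))\ge\varepsilon/2$; the previous step guarantees the defining set is nonempty, and ``least element of a nonempty subset of $\omega$'' makes $n\mapsto k_n$ a genuine function in $\mathbf{ZF}$. Then $a_{k_n}\to x$ while $\rho(f(x),f(a_{k_n}))\ge\varepsilon/2$ for all $n$, so $f(a_{k_n})\not\to f(x)$, contradicting sequential continuity. This proves continuity at every $x$, hence on $X$. The argument is a self-contained rephrasing of what can be extracted from the $\mathbf{ZF}$ proof of Theorem 3.15 in \cite{Her}; its single delicate point, and the reason separability is indispensable, is that the enumerated dense set is precisely what lets the otherwise choice-dependent selection of a counterexample sequence be carried out by the ``least index'' rule.
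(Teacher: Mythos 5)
Your proof is correct and is essentially the argument the paper relies on: the paper gives no explicit proof but defers to the $\mathbf{ZF}$ proof of Theorem 3.15 in \cite{Her}, whose key device is exactly your ``least index in an enumerated countable dense set'' selection, which replaces the choice-dependent construction of a witnessing sequence. Your write-up is a faithful, self-contained generalization of that argument to the separable metrizable setting, so there is nothing to correct.
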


\begin{remark} Let $K$ be a $d$-Chebyshev set in a normed vector space $X$ where $d$ is induced by the norm of $X$. If $X$ is finitely dimensional, the classical proofs of the continuity of the metric projection $p_{d,K}$ of $X$ onto $K$ given, for instance, in \cite{FM} and \cite{Web} have a gap in $\mathbf{ZF}$ because only sequential continuity of $p_{d, K}$ was shown there. To deduce from the sequential continuity of $p_{d,K}$ its continuity, we need Theorem 9. If $X$ is infinitely dimensional, $p_{d,K}$ may fail to be continuous (see Example 2.54 in \cite{FM} and  page  246 in \cite{G}).
\end{remark}

We are going to give a direct proof in $\mathbf{ZF}$ of the continuity of $p_{d,K}$ if $K$ is a $d$-Chebyshev set in an arbitrary nonempty metric space $(X, d)$ having the Heine-Borel property.

\begin{definition} Let $Y$ be a subset of the metric space $(X,d)$. We say that:
\begin{enumerate}
\item[(i)] $Y$ has the \textbf{Heine-Borel property with respect to $d$} or, equivalently, $Y$ is \textbf{$d$-totally complete} if, for each $K\subseteq Y$, it is true that $K$ is compact if $K$ is simultaneously closed in $Y$ and $d$-bounded;
\item[(ii)] $Y$ is \textbf{$d$-boundedly compact} if, for each positive real number $r$ and for each $x\in X$, the set $Y\cap\bar{B}_d(x, r)$ is compact.
\end{enumerate}
\end{definition}

The Heine-Borel property, called also total compactness by some authors (see \cite{N}),  is a familiar notion in topology and in the theory of metric spaces. Definition 11(ii) is an adaptation of Definition 2.25 of \cite{FM} to metric spaces,  given here in order to notice that a subset $Y$ of the metric space $(X, d)$ is $d$-boundedly compact if and only if $Y$  has the Heine-Borel property with respect to $d$.  

\begin{theorem} Let $K$ be a $d$-boundedly compact subset of the metric space $(X,d)$ and let $x\in X$ be such that $P_{d,K}(x)$ is a singleton. Then the mapping $P_{d, K}$ is $d$-continuous at $x$.
\end{theorem}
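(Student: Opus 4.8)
The plan is to give a direct proof in $\mathbf{ZF}$ that never selects a sequence, thereby avoiding any appeal to countable choice. Write $p=p_{d,K}(x)$ and $R=d(x,K)=d(x,p)$; note that $P_{d,K}(x)$ being a singleton forces $K\neq\emptyset$, so $p$ exists. The first step is a localization: I would show that if $d(x,y)\le 1$ and $z\in P_{d,K}(y)$, then $z$ lies in the fixed set $C:=K\cap\bar{B}_d(x,R+2)$. Indeed, $d(y,z)=d(y,K)\le d(y,p)\le d(x,y)+R$, whence $d(x,z)\le d(x,y)+d(y,z)\le 2d(x,y)+R\le R+2$. Since $K$ is $d$-boundedly compact, the set $C$ is compact, and it is nonempty because $p\in C$. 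Thus all candidate nearest points arising from $y$ near $x$ are trapped inside one compact set, which is what will replace the usual ``extract a convergent subsequence'' step.

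Now fix $\varepsilon\in(0;+\infty)$ and consider $A:=\{z\in C:d(z,p)\ge\varepsilon\}$. Being the intersection of the compact set $C$ with the closed set $\{z:d(z,p)\ge\varepsilon\}$, the set $A$ is compact. If $A=\emptyset$, then every point of $C$ lies within $\varepsilon$ of $p$, and by the localization step any $\delta\le 1$ already forces $d(z,p)<\varepsilon$ whenever $d(x,y)<\delta$ and $z\in P_{d,K}(y)$. Suppose instead $A\neq\emptyset$. The real-valued function $z\mapsto d(x,z)$ is continuous on the nonempty compact space $A$, so in $\mathbf{ZF}$ the image $\{d(x,z):z\in A\}$ is a nonempty closed and bounded subset of $\mathbb{R}$, which therefore contains its infimum; pick $z_0\in A$ with $m:=d(x,z_0)=\inf_{z\in A}d(x,z)$. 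Because $d(z_0,p)\ge\varepsilon>0$ we have $z_0\neq p$, and since $p$ is the \emph{unique} $d$-nearest point to $x$ in $K$, it follows that $m=d(x,z_0)>d(x,p)=R$. This strict inequality $m>R$ is the crux of the argument, and it is exactly here that the hypothesis that $P_{d,K}(x)$ is a singleton enters.

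Put $\eta:=m-R>0$ and set $\delta:=\min(1,\eta/2)$; I would then verify that this $\delta$ witnesses $d$-continuity at $x$. Let $d(x,y)<\delta$ and $z\in P_{d,K}(y)$. Since $\delta\le 1$, the localization step gives $z\in C$. If one had $d(z,p)\ge\varepsilon$, then $z\in A$ and hence $d(x,z)\ge m=R+\eta$; on the other hand, the same triangle-inequality bound as above yields $d(x,z)\le 2d(x,y)+R<2\delta+R\le R+\eta$, a contradiction. Therefore $d(z,p)<\varepsilon$, which is precisely the condition in Definition 8.

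The main obstacle, and the reason the classical argument must be rethought, is the commitment to $\mathbf{ZF}$. The textbook proofs localize and then argue by contradiction through a sequence $y_n\to x$ together with chosen witnesses $z_n\in P_{d,K}(y_n)$ satisfying $d(z_n,p)\ge\varepsilon$, finally passing to a convergent subsequence of $(z_n)$; both the formation of $(z_n)$ and the subsequential extraction invoke forms of choice, and, in view of Remark 7 and Theorem 9, such choices genuinely matter in models of $\mathbf{ZF}$. The device that removes them is to package the entire obstruction into the single compact set $A$ and to invoke the $\mathbf{ZF}$-theorem that a continuous real-valued function on a nonempty compact space attains its minimum; this converts an infinite sequence of selections into one extremal point $z_0$, after which only elementary estimates remain.
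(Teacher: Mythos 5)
Your proof is correct, every step of it is available in $\mathbf{ZF}$, and it takes a genuinely different route from the paper's. The paper argues by contradiction: assuming $d$-continuity fails at $x$ with witness $\varepsilon$, it forms, for each $n$, the set $F_n=[\bigcup\{P_{d,K}(y):y\in B_d(x,\tfrac{1}{n})\}]\setminus B_d(p_{d,K}(x),\varepsilon)$, shows that each closure $C_n=\text{cl}_X(F_n)$ is a compact subset of $K$, uses the finite intersection property of the centred family $\{C_n\}$ to produce a point $z_0\in\bigcap_n C_n$, and then, by a somewhat delicate limiting estimate (letting an auxiliary $\eta>0$ and $n\ge n_0$ vary), concludes $d(x,z_0)=d(x,K)$, so $z_0=p_{d,K}(x)$, while $z_0$ also stays away from $p_{d,K}(x)$ --- a contradiction. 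You instead give a direct proof: you trap all nearest points arising from $y$ near $x$ in the single compact set $C=K\cap\bar{B}_d(x,R+2)$, note that the bad set $A=\{z\in C:d(z,p)\ge\varepsilon\}$ is compact, and apply the $\mathbf{ZF}$ extreme value theorem to attain $m=\inf_{z\in A}d(x,z)$; the singleton hypothesis then gives the strict gap $m>R$, from which the explicit modulus $\delta=\min(1,(m-R)/2)$ follows. Both proofs exploit bounded compactness precisely to eliminate the sequence-plus-subsequence selections of the classical argument, and both produce one extremal point by a choice-free compactness principle; but your version is direct rather than by contradiction, yields a constructive $\delta$, and replaces the paper's nested-compacta bookkeeping with one application of minimum attainment, whereas the paper's $C_n$'s are the choice-free surrogate for ``pass to a convergent subsequence'' and hence stay closer to the textbook proof. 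One point worth making explicit, since the whole issue is the $\mathbf{ZF}$ setting: when you ``pick'' $z_0\in A$ with $d(x,z_0)=m$, no choice is involved --- the image $\{d(x,z):z\in A\}$ is compact, hence closed and bounded in $\mathbb{R}$, so it contains its infimum, and selecting one witness of an existential statement is mere instantiation.
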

\begin{proof} Suppose that $P_{d,K}$ is not $d$-continuous at $x$. There exists a positive real number $\varepsilon$ such that, for each $n\in\omega \setminus \{ 0\}$, the set 
$$F_n=[\bigcup\{P_{d,K}(y): y\in B_d(x, \frac{1}{n})\}]\setminus B_d(p_{d,K}(x),\varepsilon)$$
 is nonempty. Let $n\in\omega \setminus \{ 0\}$ and $y\in B_d(x,\frac{1}{n})$. For any $z\in P_{d,K}(y)$, we have: $d(x, z)\leq d(x, y)+d(y,z)\leq \frac{1}{n}+ d(y,K)\leq\frac{1}{n}+d(y,x)+d(x, K)\leq\frac{2}{n}+d(x, K)$. This implies that the set $F_n$ is $d$-bounded. Hence the set $C_n=\text{cl}_{X}(F_n)$ is $d$-bounded. Then $C_n\cap K$ is compact because $K$ is $d$-boundedly compact. Since $F_n\subseteq C_n\cap K$, we obtain that $C_n\subseteq K$. Of course, the collection $\{C_n: n\in\omega \setminus \{ 0\}\}$ is centred. Thus, it follows from the compactness of $C_n$ that there exists $z_0\in\bigcap_{n\in\omega \setminus \{ 0\}}C_n$. Then $z_0\in K$. Let $g(t)=d(t, K)$ for $t\in X$. Let us fix a positive real number $\eta$. Since the function $g$ is continuous, there exists $n_0\in\omega \setminus \{ 0\}$ such that $\frac{1}{n_0}<\varepsilon$ and if $t\in B_d(x, \frac{1}{n_0})$, then $\vert g(t)-g(x)\vert <\eta$. Since $z_0\in\text{cl}_X(F_n)$, there exists $z_1\in F_n$ such that $d(z_0, z_1)<\frac{1}{n}$. There exists $y_1\in B_d(x, \frac{1}{n})$ such that $z_1\in P_{d,K}(y_1)$. Since $z_0\in K$, we have that $d(x, K)\leq d(x, z_0)$. On the other hand, $d(x, z_0)\leq d(x, y_1)+ d(y_1, z_1)+d(z_1, z_0)\leq\frac{2}{n}+d(y_1,K)\leq\frac{2}{n}+d(x,K)+\eta$. This, together with the arbitrariness of  the numbers $\eta>0$ and $n\ge n_0$, implies that $d(x, z_0)\leq  d(x, K)$. In consequence, we obtain that $d(x, z_0)=d(x, K)=d(x, p_{d,K}(x))$, so $z_0=p_{d,K}(x)$. However, $\varepsilon\leq d(p_{d, K}(x), z_1)\leq d(p_{d,K}(x), z_0)+d(z_0, z_1)\leq d(p_{d, K}(x), z_0)+\frac{1}{n}$ and thus $d(p_{d,K}(x), z_0)>0$. The contradiction obtained completes the proof.
\end{proof}

An application of our proof to Theorem 12 is that the following immediate consequence of Theorem 12 holds true in $\mathbf{ZF}$: 
 
\begin{corollary} If $K$ is a $d$-boundedly compact $d$-Chebyshev set in $X$, then the $d$-projection of $X$ onto $K$ is continuous.
\end{corollary}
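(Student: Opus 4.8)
The plan is to read off the corollary directly from Theorem 12, the only additional work being to translate the $d$-continuity of the set-valued map $P_{d,K}$ into ordinary continuity of the single-valued map $p_{d,K}$ under the Chebyshev hypothesis. First I would observe that, since $K$ is a $d$-Chebyshev set in $X$, the set $P_{d,K}(x)$ is a singleton for every $x\in X$; hence the metric projection $p_{d,K}\colon X\to K$ is a well-defined single-valued mapping, and for each $y\in X$ the unique element of $P_{d,K}(y)$ is $p_{d,K}(y)$.

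Next I would apply Theorem 12 pointwise. Because $K$ is $d$-boundedly compact and, for every fixed $x\in X$, the set $P_{d,K}(x)$ is a singleton, Theorem 12 guarantees that $P_{d,K}$ is $d$-continuous at $x$ in the sense of Definition 8. The substance of the corollary is thus already contained in Theorem 12; what remains is purely formal.

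To finish, I would fix $x\in X$ and $\varepsilon\in(0;+\infty)$ and take the $\delta\in(0;+\infty)$ furnished by Definition 8. Given any $y\in X$ with $d(x,y)<\delta$, the point $z=p_{d,K}(y)$ belongs to $P_{d,K}(y)$, so the implication in Definition 8 yields $d(p_{d,K}(y),p_{d,K}(x))<\varepsilon$. This is exactly the $\varepsilon$--$\delta$ statement that $p_{d,K}$ is continuous at $x$, and since $x$ was arbitrary, $p_{d,K}$ is continuous on $X$.

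I do not expect a genuine obstacle at the level of the corollary: all the difficulty is absorbed into Theorem 12, whose proof is arranged to use compactness and centredness of a family of closed sets rather than any sequential extraction, and therefore stays within $\mathbf{ZF}$. The one point I would stress is precisely this choice-free character. Because both Definition 8 and the argument above are phrased entirely in $\varepsilon$--$\delta$ terms and never pass to sequences, the deduction avoids the gap described in Remark 10, where only sequential continuity of the metric projection is obtained and Theorem 9 (hence some fragment of choice) would otherwise be needed to upgrade it to continuity.
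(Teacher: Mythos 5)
Your proof is correct and follows the same route as the paper, which states this corollary as an immediate consequence of Theorem 12 with no separate argument. The only thing you add is the explicit $\varepsilon$--$\delta$ unwinding of Definition 8 to pass from $d$-continuity of $P_{d,K}$ to continuity of $p_{d,K}$, a translation the paper already records (for Chebyshev sets) in the discussion following Definition 8.
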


\subsection{Operations on $d$-Chebyshev sets}

As in subsection 2.1, let us assume that $(X, d)$ is a nonempty metric space. 

\begin{proposition} Let $\mathcal{C}$ be a nonempty collection of $d$-Chebyshev sets in $X$ and let $K=\bigcup\mathcal{C}$. Suppose that $K$ is  $d$-boundedly compact and has the property that, for each pair $x,y$ of points of $K$, there exists $C\in\mathcal{C}$ such that $x,y\in C$. Then $K$ is a $d$-Chebyshev set in $X$.
\end{proposition}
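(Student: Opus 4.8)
The plan is to verify directly the two requirements in the definition of a $d$-Chebyshev set: for every $x\in X$, the set $P_{d,K}(x)$ is nonempty and contains at most one point. First I would note that $K$ is nonempty: since $\mathcal{C}$ is nonempty, pick any $C_0\in\mathcal{C}$, and because $C_0$ is $d$-Chebyshev the set $P_{d,C_0}(x)$ is a singleton, hence $C_0\neq\emptyset$ and so $K\supseteq C_0\neq\emptyset$.

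For existence of a nearest point, I would fix $x\in X$, choose a single point $a\in K$ (one choice from a nonempty set, legitimate in $\mathbf{ZF}$), and set $r=d(x,a)$ and $L=K\cap\bar{B}_d(x,r)$. Then $L$ is nonempty and compact, directly by the $d$-bounded compactness of $K$ via Definition 11(ii). The function $g(y)=d(x,y)$ is continuous on $L$, so $g(L)$ is a nonempty compact subset of $\mathbb{R}$; by the least-upper-bound property of $\mathbb{R}$ its infimum $m$ exists, and since compact subsets of $\mathbb{R}$ are closed, $m$ is attained at some $y_0\in L$. Because every point of $K\setminus\bar{B}_d(x,r)$ lies at distance greater than $r=d(x,a)$ from $x$, one has $m=d(x,K)$, whence $y_0\in P_{d,K}(x)$. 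I would emphasize that this argument uses only open-cover compactness and the order-completeness of $\mathbb{R}$, so it requires no form of choice; in particular it avoids any sequential-compactness reasoning, which is exactly the kind of step that fails in $\mathbf{ZF}$ and is the point the paper is at pains to circumvent.

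For uniqueness, suppose $y_1,y_2\in P_{d,K}(x)$, so that $d(x,y_1)=d(x,y_2)=d(x,K)$. By the hypothesis on $\mathcal{C}$ there is $C\in\mathcal{C}$ with $y_1,y_2\in C$. Since $C\subseteq K$ we have $d(x,K)\leq d(x,C)$, while $y_1\in C$ gives $d(x,C)\leq d(x,y_1)=d(x,K)$; hence $d(x,C)=d(x,K)$. Therefore $d(x,y_i)=d(x,C)$ for $i\in\{1,2\}$, that is, $y_1,y_2\in P_{d,C}(x)$. As $C$ is $d$-Chebyshev, $P_{d,C}(x)$ is a singleton, forcing $y_1=y_2$. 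Combining the two parts, $P_{d,K}(x)$ is a singleton for every $x\in X$, so $K$ is $d$-Chebyshev.

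The only step needing genuine care is the existence argument, not because it is deep but because it must be executed through compactness and the supremum property rather than through sequences, so as to remain a proof in $\mathbf{ZF}$; the uniqueness step is a short purely metric computation whose whole content is using the pairwise covering property to transfer the question to a single Chebyshev member $C$ of $\mathcal{C}$, where uniqueness is already guaranteed.
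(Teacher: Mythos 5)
Your proof is correct and follows essentially the same route as the paper's: existence is obtained by intersecting $K$ with a closed ball around $x$ and using the compactness guaranteed by $d$-bounded compactness, and uniqueness is obtained by using the pairwise covering hypothesis to place both candidate nearest points in a single $d$-Chebyshev member $C\in\mathcal{C}$, where the projection is unique. The only difference is presentational: you spell out why the infimum-attainment step is choice-free and you verify $d(x,C)=d(x,K)$ directly, whereas the paper compares distances to the point $p_{d,C}(x_0)$; these are the same argument.
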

\begin{proof} Let $x_0\in X$. Take any $c\in K$. Put $r=d(x_0, c)$ and $E=\bar{B}_d(x_0, r)\cap K$. Then $d(x_0, K)=d(x_0, E)$. Since $E$ is nonempty and compact, there exists $c_0\in E$ such that $d(x_0, c_0)=d(x_0, E)$, so $K$ is $d$-proximinal. Suppose that $c_1, c_2\in K$ and $d(x_0, c_1)=d(x_0, c_2)=d(x_0, K)$. There exists $C\in\mathcal{C}$ such that $c_1, c_2\in C$. Let $i\in\{1, 2\}$ and let $a=p_{d, C}(x_0)$. Then $d(x_0, c_i)\leq d(x_0, a)$ because $a\in K$. Since $c_i\in C$, we have $d(x_0, a)\leq d(x_0, c_i)$. Hence $d(x_0, c_i)=d(x_0, a)$. This implies that $c_i=a$ becauce $P_{d, C}(x_0)=\{a\}$. 
\end{proof}

The following notions have appeared useful for investigations of Chebyshev sets, especially in strictly convex metric spaces (see, for instance, \cite{ANT}, \cite{N} and \cite{NS}):

\begin{definition} Let $Y\subseteq X$ and let $x, y\in Y$. Then:
\begin{enumerate}
\item[(i)] (cf. Definition 2.1 of \cite{NS})  the \textbf{$d$-segment} in $Y$ between points $x$ and $y$ is the set $[x, y]_{d, Y}$ defined by:
$$[x, y]_{d,Y}=\{ z\in Y: d(x, z)+d(z, y)=d(x, y)\};$$
\item[(ii)] if $Y=X$, then $[x, y]_d=[x, y]_{d, Y}$;
\item[(iii)] (cf. Definition 2.6 of \cite{NS}) a set $A\subseteq Y$ is called \textbf{$d$-convex} in $Y$ if, for each pair of points $x, y\in A$, the inclusion $[x, y]_{d,Y}\subseteq A$ holds;
\item[(iv)] (cf. Definition 2.1 of \cite{NS})  a point $z\in X$  is \textbf{$d$-between $x$ and $y$} in $Y$ if $z\in [x, y]_{d,Y}$.
\end{enumerate}
\end{definition}

The following proposition, relevant to Corollary 1 of \cite{ANT},  follows immediately from Proposition 14:

\begin{proposition} If all $d$-segments in $X$ are $d$-Chebyshev sets, then every  nonempty $d$-boundedly compact $d$-convex set in $X$ is $d$-Chebyshev in $X$.
\end{proposition}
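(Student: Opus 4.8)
The plan is to deduce Proposition~16 from Proposition~14 by verifying that a nonempty $d$-boundedly compact $d$-convex set $A$ satisfies the hypotheses of Proposition~14, under the standing assumption that every $d$-segment in $X$ is a $d$-Chebyshev set. First I would form the collection $\mathcal{C}=\{[x,y]_d : x,y\in A\}$ of all $d$-segments joining pairs of points of $A$. By assumption each member of $\mathcal{C}$ is a $d$-Chebyshev set in $X$, so $\mathcal{C}$ is a nonempty collection of $d$-Chebyshev sets. The pairwise-joining hypothesis of Proposition~14 is immediate: given $x,y\in A$, the segment $[x,y]_d$ itself lies in $\mathcal{C}$ and contains both $x$ and $y$, since $d$-betweenness is reflexive at the endpoints (each endpoint satisfies $d(x,z)+d(z,y)=d(x,y)$ trivially when $z\in\{x,y\}$).

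Next I would identify $K=\bigcup\mathcal{C}$ with $A$ itself. Because $A$ is $d$-convex in $X$, for any $x,y\in A$ we have $[x,y]_d=[x,y]_{d,X}\subseteq A$, so $K\subseteq A$; conversely each point of $A$ lies in a segment joining it to itself, or to any other point of $A$, giving $A\subseteq K$. Hence $K=A$. With this identification, the remaining hypothesis of Proposition~14, namely that $K$ be $d$-boundedly compact, is exactly the assumed $d$-bounded compactness of $A$. Proposition~14 then yields directly that $K=A$ is a $d$-Chebyshev set in $X$, which is the desired conclusion.

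The one point that requires a little care, rather than being a genuine obstacle, is confirming $A\subseteq K$ when $A$ is degenerate or when one worries whether a single point generates a segment; this is handled by observing that $[x,x]_d=\{x\}$, so every singleton of $A$ appears as a member of $\mathcal{C}$, and the union therefore recovers all of $A$. I would also take a moment to note that the whole argument stays within $\mathbf{ZF}$: no choice is invoked beyond what already appears in the proof of Proposition~14, and the set-valued constructions here are explicit. Since the reduction is purely formal once the identification $K=A$ is established, the proof is short, and I expect no step to present real difficulty beyond this bookkeeping.
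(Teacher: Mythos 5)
Your proof is correct and is exactly the reduction the paper intends: the paper states Proposition 16 as following immediately from Proposition 14, and your argument supplies precisely the omitted bookkeeping. Taking $\mathcal{C}=\{[x,y]_d : x,y\in A\}$, noting each segment is $d$-Chebyshev by hypothesis and contains its endpoints, and using $d$-convexity to get $\bigcup\mathcal{C}=A$ so that $d$-bounded compactness transfers, is the intended application of Proposition 14, and it stays within $\mathbf{ZF}$ as you observe.
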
 

\begin{proposition} Suppose that $A_1, A_2$ and $A=A_1\cup A_2$ are all $d$-Chebyshev sets in $X$. Then, for each $x\in X$, we have:
\begin{eqnarray}
p_{d,A}(x)=\left\{ 
\begin{array}{ccc}
p_{d,A_1}(x) \text{ if } d(x, p_{d, A_1}(x))\leq d(x, p_{d, A_2}(x)) \\ 
p_{d,A_2}(x) \text{ \ \ \ \ \ \ \ \ \ \ \ \ \ otherwise}
\end{array}
\right. \text{.}
\end{eqnarray}
\end{proposition}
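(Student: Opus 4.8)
The plan is to prove the formula by directly analyzing the three quantities $d(x, A)$, $d(x, A_1)$ and $d(x, A_2)$ and using the fact that each of the three sets is Chebyshev, so that all the projections $p_{d,A}(x)$, $p_{d,A_1}(x)$ and $p_{d,A_2}(x)$ are well-defined single points. Fix $x\in X$. The key observation is that, since $A=A_1\cup A_2$, the distance from $x$ to $A$ is the smaller of the two distances to the pieces; in symbols,
\[
d(x, A)=\min\{d(x, A_1), d(x, A_2)\}=\min\{d(x, p_{d,A_1}(x)), d(x, p_{d,A_2}(x))\}.
\]
This identity is the heart of the argument, and it holds simply because the infimum of $d(x,\cdot)$ over a union is the minimum of the infima over the two sets.

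Next I would handle the two cases of the formula separately. Suppose first that $d(x, p_{d,A_1}(x))\le d(x, p_{d,A_2}(x))$. Then the displayed minimum equals $d(x, p_{d,A_1}(x))$, so $p_{d,A_1}(x)$ is a point of $A$ realizing $d(x, A)$; that is, $p_{d,A_1}(x)\in P_{d,A}(x)$. Since $A$ is Chebyshev, $P_{d,A}(x)=\{p_{d,A}(x)\}$ is a singleton, and therefore $p_{d,A}(x)=p_{d,A_1}(x)$, which is exactly the first line of the claimed formula. In the complementary case $d(x, p_{d,A_2}(x))<d(x, p_{d,A_1}(x))$, the same reasoning with the roles of $A_1$ and $A_2$ interchanged gives $d(x, A)=d(x, p_{d,A_2}(x))$, so $p_{d,A_2}(x)\in P_{d,A}(x)=\{p_{d,A}(x)\}$, whence $p_{d,A}(x)=p_{d,A_2}(x)$, matching the ``otherwise'' line.

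I do not expect any serious obstacle here; the statement is a short uniqueness argument once the minimum identity is in hand, and it requires no choice principle since all three projections are given as genuine points by the Chebyshev hypothesis. The only point demanding a little care is the boundary case of equality $d(x, p_{d,A_1}(x))=d(x, p_{d,A_2}(x))$: here both $p_{d,A_1}(x)$ and $p_{d,A_2}(x)$ lie in $P_{d,A}(x)$, and the Chebyshev property of $A$ forces them to coincide, so the formula is consistent and the convention of selecting $p_{d,A_1}(x)$ in the case of a tie is legitimate. This is precisely why the hypothesis that $A$ itself is Chebyshev (not merely proximinal) is needed, and I would flag that the equality case is where that hypothesis does its work.
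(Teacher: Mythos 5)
Your proof is correct, but it runs in the opposite direction from the paper's. The paper starts from $a=p_{d,A}(x)$, splits into cases according to whether $a\in A_1$ or $a\in A_2\setminus A_1$, and identifies $a$ with $a_1=p_{d,A_1}(x)$ or $a_2=p_{d,A_2}(x)$ by invoking uniqueness of the nearest point in the \emph{piece}: from $a\in A_1$ and $d(x,a)=d(x,a_1)$ it concludes $a=a_1$ because $A_1$ is Chebyshev, and the inequality $d(x,a_1)\le d(x,a_2)$ then falls out as a consequence (with the strict inequality $d(x,a_1)>d(x,a)$ in the second case). You instead split on the inequality $d(x,a_1)\le d(x,a_2)$ itself, i.e.\ on the cases of the displayed formula, use the identity $d(x,A)=\min\{d(x,A_1),d(x,A_2)\}$ to see that the winning point belongs to $P_{d,A}(x)$, and then invoke uniqueness of the nearest point in the \emph{union}. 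Both are equally short \textbf{ZF} uniqueness arguments, but the logical resources differ slightly: your version only needs $A_1,A_2$ to be proximinal (any choice of nearest points works) plus the Chebyshev property of $A$, so it proves a marginally stronger statement and matches the statement's case split verbatim; the paper's version leans on the Chebyshev property of the pieces to pin down $a$. Your closing remark on the tie case is also accurate: equality $d(x,a_1)=d(x,a_2)$ with $a_1\ne a_2$ is precisely the configuration that the Chebyshev hypothesis on $A$ excludes, which is why that hypothesis cannot be weakened to proximinality of $A$.
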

\begin{proof} Let $x\in X$. Denote $a=p_{d,A}(x)$ and $a_i=p_{d,A_i}(x)$ for $i\in\{1, 2\}$. Of course, $d(x, a)\leq d(x, a_i)$ for $i\in\{1, 2\}$. Suppose that $a\in A_1$. Then $d(x, a_1)\leq d(x, a)$,  so $d(x, a)=d(x, a_1)$. This implies that $a=a_1$ and $d(x, a_1)\leq d(x, a_2)$. If $a\in A_2\setminus A_1$, then $a=a_2$ and $d(x, a_1)>d(x, a)$. 
\end{proof}


Using similar elementary arguments as in the proof to Proposition 17,  one can show that the following proposition holds:

\begin{proposition} Let $n\in\omega\setminus\{0\}$. Assume that, for each $i\in n$, we are given a $d$-Chebyshev set  $A_i$ in $X$ such that the set $A=\bigcup_{i\in n}A_i$ is $d$-Chebyshev in $X$. Let $x\in X$ and let $a=p_{d,A}(x)$. Then $a=p_{d,A_i}(x)$ for each $i\in n$ such that $a\in A_i$.
\end{proposition} 

Now, we assume that $n\in\omega \setminus \{ 0\}$ and $(X_i, d_i)$ is a nonempty metric space for each $i\in n$, while $X=\prod_{i\in n}X_i$. Usually, the equivalent metrics $\rho_k$ for $k\in 3$ are considered in $X$ where, for each $x, y\in X$, we have:
\begin{eqnarray*}
\rho_0(x, y)& = & \max\{d_i(x(i), y(i)): i\in n\},\\
\rho_1(x, y)&=&\sqrt{\sum_{i\in n}d_i(x(i), y(i))^2},\\
\rho_2(x, y)& = & \sum_{i\in n}d_i(x(i), y(i)).
\end{eqnarray*}

Since the theorem below will be of essential importance, we include its simple proof for completeness:

\begin{theorem} Let $A_i\subseteq X_i$ for each $i\in n$ and let $A=\prod_{i\in n}A_i$, while $k\in\{1, 2\}$. Then the following conditions are satisfied:
\begin{enumerate}
\item[(i)] For each $x\in X$, the equality $P_{\rho_k, A}(x)=\prod_{i\in n}P_{d_i, A_i}(x(i))$ holds.
\item[(ii)] $A$ is a $\rho_k$-Chebyshev set in $X$ if and only if $A_i$ is a $d_i$-Chebyshev set in $X_i$ for each $i\in n$.
\end{enumerate}
\end{theorem}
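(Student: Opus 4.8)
The plan is to prove both parts by reducing the product statement to coordinatewise statements, using the defining formulas for the metrics $\rho_1$ and $\rho_2$. The crucial observation is that both metrics are built from the coordinate distances $d_i(x(i),y(i))$ in a \emph{strictly increasing} way: $\rho_2$ is their sum, and $\rho_1$ is the square root of the sum of their squares. This monotonicity is what forces a point of $A$ to be $\rho_k$-nearest to $x$ exactly when each of its coordinates is $d_i$-nearest to $x(i)$ in $A_i$.

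For part (i), I would first establish the key distance identity for the candidate minimizer. The plan is to show that $\rho_k(x,A)=\left(\sum_{i\in n} d_i(x(i),A_i)^2\right)^{1/2}$ when $k=1$ and $\rho_k(x,A)=\sum_{i\in n} d_i(x(i),A_i)$ when $k=2$. To see this, note that for any $a\in A=\prod_{i\in n}A_i$ we have $a(i)\in A_i$, hence $d_i(x(i),a(i))\ge d_i(x(i),A_i)$ for each $i$; since $\rho_k$ is a strictly increasing function of the tuple $(d_i(x(i),a(i)))_{i\in n}$, this yields $\rho_k(x,a)\ge$ the claimed value. For the reverse inequality one must produce points achieving the coordinate infima, but here the argument is cleanest if I avoid claiming existence: instead, for every $\eta>0$ I choose, for each $i\in n$, a point $b_i\in A_i$ with $d_i(x(i),b_i)<d_i(x(i),A_i)+\eta$ (a \emph{finite} choice over the index set $n$, so no form of choice beyond $\mathbf{ZF}$ is needed), assemble $b=(b_i)_{i\in n}\in A$, and check $\rho_k(x,b)$ is within a controllable amount of the target. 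Letting $\eta\to 0$ gives the infimum formula. With this formula in hand, the set equality $P_{\rho_k,A}(x)=\prod_{i\in n}P_{d_i,A_i}(x(i))$ follows by the same strict-monotonicity argument: equality $\rho_k(x,a)=\rho_k(x,A)$ holds if and only if each coordinate inequality $d_i(x(i),a(i))\ge d_i(x(i),A_i)$ is an equality, i.e. $a(i)\in P_{d_i,A_i}(x(i))$ for every $i$.

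Part (ii) is then almost immediate from part (i). A product of sets is a singleton if and only if each factor is a singleton, and a product is nonempty if and only if each factor is nonempty. Thus $P_{\rho_k,A}(x)$ is a singleton for every $x\in X$ if and only if, for each $x\in X$ and each $i\in n$, the set $P_{d_i,A_i}(x(i))$ is a singleton. Since the evaluation map $x\mapsto x(i)$ sends $X$ onto $X_i$ (each $X_j$ is nonempty, so again only a finite choice of a witness in the remaining coordinates is needed to realize an arbitrary prescribed $i$-th coordinate), the latter condition is equivalent to $A_i$ being $d_i$-Chebyshev in $X_i$ for each $i\in n$. I would state both directions explicitly, reading off the forward direction by fixing one coordinate and filling the others with arbitrary points.

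The main obstacle is the reverse inequality in the infimum formula of part (i): one must resist the temptation to pick actual nearest points coordinatewise, since $A_i$ need not be $d_i$-proximinal. The $\eta$-approximation over the finite index set $n$ circumvents this and keeps the argument within $\mathbf{ZF}$, which is the whole point of the paper. The only care needed is the elementary estimate showing that $\eta$-closeness in each coordinate propagates to $\rho_k$-closeness of the assembled point $b$; for $\rho_2$ this is just the triangle-free additive bound, and for $\rho_1$ it follows from monotonicity of $t\mapsto\sqrt{t}$ together with the bound on each squared coordinate term.
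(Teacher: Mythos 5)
Your proof is correct, but it takes a genuinely different route from the paper's for part (i). The paper proves the set equality directly by two inclusions and never introduces a distance formula: for the inclusion $P_{\rho_k,A}(x)\subseteq\prod_{i\in n}P_{d_i,A_i}(x(i))$ it fixes $a\in P_{\rho_k,A}(x)$ and a coordinate $j$, replaces only the $j$-th coordinate of $a$ by an arbitrary $y_j\in A_j$, and notes that the resulting point $y$ still lies in $A$; comparing $\rho_k(x,a)\le\rho_k(x,y)$, whose two sides agree in every term except the $j$-th, immediately yields $d_j(x(j),a(j))\le d_j(x(j),y_j)$, so $a(j)\in P_{d_j,A_j}(x(j))$. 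The reverse inclusion is the same coordinatewise-domination argument you use. Your proof instead first establishes the infimum decomposition $\rho_1(x,A)^2=\sum_{i\in n} d_i(x(i),A_i)^2$ and $\rho_2(x,A)=\sum_{i\in n} d_i(x(i),A_i)$ via finitely many $\eta$-approximate selections, and then reads the set equality off strict monotonicity. What your route buys is a stronger intermediate fact --- the distance decomposition itself, valid without any proximinality hypothesis --- at the cost of a limiting argument; the paper's one-coordinate-variation trick is shorter and purely order-theoretic, avoiding infima and limits altogether. Two small points to tidy in your write-up: the infimum formula presupposes every $A_i\neq\emptyset$ (otherwise $\rho_k(x,A)$ is an infimum over the empty set), so the degenerate case where some $A_i$ is empty --- in which both sides of (i) are empty and the equality is trivial --- deserves a separate sentence; and part (ii) does indeed need, as you observe, that each $X_j$ is nonempty (a standing hypothesis of that subsection) so that every point of $X_i$ occurs as the $i$-th coordinate of some $x\in X$. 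Both arguments are equally choice-free, since only finitely many selections are ever made.
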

\begin{proof} To prove $(i)$, assume first that $a\in P_{\rho_k, A}(x)$. Let $j\in n$. Then $a(j)\in A_j$. Consider any $y_j\in A_j$ and define $y\in X$ as follows: $y(i)=a(i)$ for each $i\in n\setminus\{ j\}$, while $y(j)=y_j$. Since $y\in A$, we have $\rho_k(x, a)\leq \rho_k(x, y)$. This implies that $d_j(x(j), a(j))\leq d_j(x(j), y_j)$, so $a(j)\in P_{d_j, A_j}(x(j))$ and, in consequence,  $a\in \prod_{i\in n}P_{d_i, A_i}(x(i))$. 

Now, assume that $b\in \prod_{i\in n}P_{d_i, A_i}(x(i))$. Then $b\in A$. Moreover, if $z\in A$, then, since $d_i(x(i), b(i))\leq d_i(x(i), z(i))$ for each $i\in n$, we have that $\rho_k(x, b)\leq \rho_k(x, z)$. Hence $b\in P_{\rho_k, A}(x)$. This completes the proof to $(i)$. 

That $(ii)$ also holds is an immediate consequence of $(i)$. 
\end{proof}

The following example shows that Theorem 19 cannot be extended to the metric $\rho_0$.
\begin{example} Let $ A_0=A_1=[-1,1]$. For $x, y\in\mathbb{R}$, let $d_0(x,y)=d_1(x,y)=\vert x-y\vert$. Then, for $x, y\in\mathbb{R}^2$, we have $\rho_0(x,y)=\max\{\vert x(0)-y(0)\vert, \vert x(1)-y(1)\vert\}$. Now, let $x^{\ast}=(0,2)$.  Then $\{0\}=P_{d_0,A_0}(0)$ and $\{1\}=P_{d_1, A_1}(2)$, while, for $A=A_0\times A_1$, the set $P_{\rho_0,A}(x^{\ast})$ is infinite because $P_{\rho_0,A}(x^{\ast})=[-1,1]\times \{1\}$.
\end{example}
 
\section{Calculations in dioids of interest}

In this section, we recall the construction and calculations in algebraic structures of our interest. Readers can compare this material with \cite{SM}.

\subsection{Calculations in MPA and MPAP}

The max as the addition operation and the addition as the multiplication
operation are denoted respectively by $\oplus $ and $\otimes $. The union 
$\mathbb{R} \cup \left\{ \varepsilon \right\} $ (where $\varepsilon=-\infty\notin \mathbb{R}$ is a fixed element)  is denoted by 
$\mathbb{R}_{\varepsilon }$\ and the semiring 
$\left( \mathbb{R}_{\varepsilon },\oplus ,\otimes \right) $ is denoted by 
$\mathbb{R}_{\max }$. So, for any $a$, $b\in \mathbb{R}_{\varepsilon }$, 
\begin{eqnarray}
\text{ }a\oplus b=\max \left( a,b\right) \text{ and }\ a\otimes b=a+b\text{.}
\end{eqnarray}

The structure $\mathbb{R}_{\max}$ is a dioid (=an idempotent semiring) and a semifield (see Definition 1 in \cite{MP}). Notice that
$\mathbb{R}^n_{\max}$ is naturally a semimodule over $\mathbb{R}_{\max}$.

Analogously to conventional algebra, scalars in linear combinations are written before vectors. The rules for the order of evaluation of the tropical operations are similar to those of conventional algebra, i.e. max-plus algebraic power has the highest
priority, and the tropical multiplication has a higher priority than
the tropical addition. All rules of calculation in 
$\mathbb{R}_{\max }$ can be illustrated in the following example:

\begin{example}
\begin{eqnarray*}
2\oplus \left( 3^{5}\oplus 2^{-1}\right) \otimes 1\oplus \varepsilon ^{2}
&=&2\oplus \left( 5\times 3\oplus \left( -2\right) \right) \otimes 1\oplus
\varepsilon \\
&=&2\oplus \left( 15\oplus \left( -2\right) \right) \otimes 1 \\
&=&2\oplus \left( 15\otimes 1\oplus \left( -2\right) \otimes 1\right) \\
&=&2\oplus \left( \left( 15+1\right) \oplus \left( -2+1\right) \right) \\
&=&2\oplus \left( 16\oplus \left( -1\right) \right) \\
&=&\max \left( 2,16,-1\right) =16
\end{eqnarray*}
\end{example}

There is no inverse related to max operation, which makes a simple equation 
$a\oplus x=b$ often insolvable. For this reason, we first consider the algebra of
pairs as a tool in the construction of the Symmetrized Max-Plus Algebra. 

Let 
$\mathcal{P}_{\varepsilon }=$ $
\mathbb{R}_{\varepsilon }\times 
\mathbb{R}_{\varepsilon }$ with the operations $\oplus $ and $\otimes $ defined by
operations in (1) as follows: 
\begin{eqnarray}
\left( a,b\right) \oplus \left( c,d\right) &=&\left( a\oplus c,b\oplus
d\right) 
 \\
\left( a,b\right) \otimes \left( c,d\right) &=&\left( a\otimes c\oplus
b\otimes d,a\otimes d\oplus b\otimes c\right)
\end{eqnarray}

It is easy to verify that $\left( \mathcal{P}_{\varepsilon },\oplus ,\otimes
\right) $ is a dioid with $\left( \varepsilon ,\varepsilon
\right) $ and $\left( 0,\varepsilon \right) $ as its zero and unit,
respectively. Notice that $\left( \varepsilon ,\varepsilon \right) $ is the absorbing
element for $\otimes $. This structure is called the \textbf{Max-Plus Algebra of
Pairs} (abbreviated to MPAP) and denoted by $\mathcal{P}_{\max }$. As 
$\mathbb{R}_{\max }$ is isomorphic to the subdioid of pairs $\left( a,\varepsilon
\right) $, $a\in \mathbb{R}_{\varepsilon }$, then\ $\mathbb{R}_{\max }$ itself can be considered as a subdioid of $\mathcal{P}_{\max }$. If $u=(x,y)\in \mathcal{P}_{\varepsilon }$, then one defines the\textbf{ max-plus norm} (or \textbf{absolute
value}) of $u$ as $|u|_{\oplus }=x\oplus y$ and we have two unary
operators: $\ominus $ (the max-plus algebraic minus operator) and 
$($\textperiodcentered $)^{\bullet }$ (the balance operator) such that 
$\ominus  u=(y,x)$ and $u^{\bullet }=u\oplus (\ominus u)$ $=(|u|_{\oplus} ,
|u|_{\oplus})$. The properties of these operators can be expressed in the following
proposition:

\begin{proposition}[cf. \cite{SM}]
For all $u$ and $v$ in $\mathcal{P}_{\varepsilon }$, we have:
\begin{enumerate}
\item[(i)] $u^{\bullet }=\left( \ominus u\right) ^{\bullet }=\left( u^{\bullet
}\right) ^{\bullet }$;
\item[(ii)] $u\otimes v^{\bullet }=\left( u\otimes v\right) ^{\bullet }=u^{\bullet}
\otimes v=u^{\bullet}\otimes v^{\bullet}$; 
\item[(iii)] $\ominus \left( \ominus u\right) =u$, $\ominus \left( u\oplus v\right)
=(\ominus u)\oplus (\ominus v)$ and\\ 
$(\ominus u)\otimes v=\ominus \left(u\otimes v\right) =u\otimes (\ominus v)$.
\end{enumerate}
\end{proposition}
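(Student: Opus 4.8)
The plan is to verify all three groups of identities by direct coordinate computation in $\mathcal{P}_{\varepsilon}$, reducing every claim to the elementary arithmetic of $\mathbb{R}_{\varepsilon}$ under $\oplus=\max$ and $\otimes=+$. Throughout I would fix $u=(x,y)$ and $v=(c,d)$ with $x,y,c,d\in\mathbb{R}_{\varepsilon}$ and record at the outset three auxiliary facts: that $\oplus$ is commutative, associative and idempotent on $\mathbb{R}_{\varepsilon}$; that addition distributes over the maximum, i.e. $t\otimes(a\oplus b)=(t\otimes a)\oplus(t\otimes b)$ for all $t,a,b\in\mathbb{R}_{\varepsilon}$ (with the convention $\varepsilon\otimes t=\varepsilon$); and the explicit form $u^{\bullet}=(|u|_{\oplus},|u|_{\oplus})$ already noted before the statement. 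From these I would isolate the two structural properties of the max-plus norm that carry most of the work: its \emph{symmetry}, $|\ominus u|_{\oplus}=|u|_{\oplus}$, which is immediate because $\ominus u=(y,x)$ and $x\oplus y=y\oplus x$; and its \emph{multiplicativity}, $|u\otimes v|_{\oplus}=|u|_{\oplus}\otimes|v|_{\oplus}$.

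The identities in (i) and (iii) then follow in a line or two each. For (iii), $\ominus(\ominus u)=\ominus(y,x)=(x,y)=u$; the additivity $\ominus(u\oplus v)=(\ominus u)\oplus(\ominus v)$ is checked coordinatewise using commutativity of $\oplus$; and the three $\otimes$-relations reduce, after expanding $\otimes$ by its defining formula, to the observation that swapping the two coordinates of $u$ (or of $v$) simply swaps the two coordinates of $u\otimes v$. For (i), symmetry of the norm gives $(\ominus u)^{\bullet}=(|\ominus u|_{\oplus},|\ominus u|_{\oplus})=(|u|_{\oplus},|u|_{\oplus})=u^{\bullet}$, and since the norm of any balanced element $(m,m)$ is $m\oplus m=m$, applying the balance operator a second time leaves $u^{\bullet}$ unchanged, giving $(u^{\bullet})^{\bullet}=u^{\bullet}$.

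For (ii) I would compute all four quantities and show each equals the balanced element $(|u|_{\oplus}\otimes|v|_{\oplus},\,|u|_{\oplus}\otimes|v|_{\oplus})$. Writing $m=|u|_{\oplus}$ and $n=|v|_{\oplus}$, distributivity yields $u\otimes v^{\bullet}=(x,y)\otimes(n,n)=\big((x\otimes n)\oplus(y\otimes n),\,(x\otimes n)\oplus(y\otimes n)\big)=\big((x\oplus y)\otimes n,\,(x\oplus y)\otimes n\big)=(m\otimes n,m\otimes n)$, and symmetrically $u^{\bullet}\otimes v=(m\otimes n,m\otimes n)$ and $u^{\bullet}\otimes v^{\bullet}=(m\otimes n,m\otimes n)$. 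Finally $(u\otimes v)^{\bullet}=(|u\otimes v|_{\oplus},|u\otimes v|_{\oplus})$, and multiplicativity of the norm identifies $|u\otimes v|_{\oplus}=m\otimes n$, so this too coincides with the common value.

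The one step deserving genuine care — and the place I expect the only real friction — is multiplicativity of the norm, where one expands
\[
|u\otimes v|_{\oplus}=(x\otimes c)\oplus(y\otimes d)\oplus(x\otimes d)\oplus(y\otimes c)=(x\oplus y)\otimes(c\oplus d)=|u|_{\oplus}\otimes|v|_{\oplus},
\]
regrouping the four terms and applying distributivity twice. The subtlety is purely bookkeeping about the absorbing element $\varepsilon=-\infty$: one must confirm that $\varepsilon\otimes t=\varepsilon$ and $a\oplus\varepsilon=a$ make every step above valid even when some of $x,y,c,d$ equal $\varepsilon$, so that no spurious $-\infty+(+\infty)$ situation arises. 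Since $\otimes$ is genuine addition on $\mathbb{R}_{\varepsilon}$ with this absorbing convention, the distributive identity $t\otimes(a\oplus b)=(t\otimes a)\oplus(t\otimes b)$ holds without exception, and the verification goes through with no appeal to any choice principle.
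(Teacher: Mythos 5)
Your proof is correct in every detail, but there is nothing in the paper to compare it against: the proposition is stated with the attribution ``cf.~\cite{SM}'' and no proof is given at all, the authors treating it as imported from De Schutter and De Moor and following it only with a remark interpreting the content (that $(\cdot)^{\bullet}$ is a projection onto the subdioid of fixed points of $\ominus$, and that $\ominus$ is an involution commuting with $\oplus$). Your coordinatewise verification is therefore a valid, self-contained substitute for the missing argument. The two lemmas you isolate carry exactly the right weight: symmetry of $|\cdot|_{\oplus}$ disposes of (i) together with idempotency ($m\oplus m=m$), and multiplicativity, obtained by regrouping the four terms of $|u\otimes v|_{\oplus}$ and applying distributivity twice, is precisely what identifies $(u\otimes v)^{\bullet}$ with the common value $(m\otimes n,\,m\otimes n)$ of the other three expressions in (ii). Your bookkeeping about $\varepsilon$ is also sound: since $+\infty\notin\mathbb{R}_{\varepsilon}$, no indeterminate sum can arise, and $t\otimes(a\oplus b)=(t\otimes a)\oplus(t\otimes b)$ holds without exception under the convention $\varepsilon\otimes t=\varepsilon$, so every step survives coordinates equal to $\varepsilon$. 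Finally, your observation in (iii) that swapping the coordinates of one factor merely swaps the coordinates of the product is the cleanest way to obtain all three $\otimes$-identities simultaneously, and it is consistent with the paper's later remark; your proof could be inserted after the proposition essentially verbatim.
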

 
 \begin{remark}
 The above can be interpreted: $($\textperiodcentered $)^{\bullet }$ is  a
 projection onto another subdioid of $\mathcal{P}_{\varepsilon }$ consisting of fixed points of $\ominus$ 
  and
  $\ominus $ is an involution commuting with $\oplus $.
 \end{remark}

We shall write $u\ominus v$ instead
of $u\oplus (\ominus v)$.

\subsection{Construction of SMPA}

For readers' convenience, let us recall in brief the construction of SMPA described in \cite{SM}. 

Since $u\ominus u=(|u|_{\oplus },|u|_{\oplus })=\left( \varepsilon
,\varepsilon \right) \Rightarrow u=\left( \varepsilon ,\varepsilon \right)$, 
then $\mathcal{P}_{\max }$ is, as 
$
\mathbb{R}_{\max }$, without inverses with respect to $\oplus $. For this reason, the
balance relation $\bigtriangledown $ is introduced: For all $u=\left(
a,b\right)\in\mathcal{P}_{\varepsilon} $ and $v=\left( c,d\right) \in \mathcal{P}_{\varepsilon }$, we have 
\begin{eqnarray}
u\bigtriangledown v\Leftrightarrow a\oplus d=b\oplus c\text{.}
\end{eqnarray}
This relation is reflexive and symmetric, but not transitive because, for real numbers $x, y, z$ such that
$x>z>y$, we have that 
\[
x=x\oplus x=x\oplus y\text{ }\neq y\oplus z\text{, }
\]
which gives $\left( x,y\right) \bigtriangledown \left( x,x\right) $ and 
$\left( x,x\right) \bigtriangledown \left( z,x\right) $, but $\left(
x,y\right) $ is not balanced with $\left( z,x\right) $. So we need to modify
the relation $\bigtriangledown$ somewhat to have an associated with it equivalence relation $\sim $ in $\mathcal{P}_{\varepsilon}$ such that it is possible, in a standard way, to extend the operations $\oplus $ and $\otimes $ of $\mathcal{P}_{\max }$
to the quotient set $\mathbb{S}=\mathcal{P}_{\varepsilon }/_{\sim }$ to obtain the semiring $\mathcal{P_{\max}}/_{\sim}$  which
contains not inverses but symmetrized elements which have some properties of
inverses.  Let the equivalence relation $\sim $ be defined as follows: for all $u,v\in\mathcal{P}_{\varepsilon}$ with
$u=\left( a,b\right) $ and $v=\left( c,d\right)$ where $a,b,c,d\in\mathbb{R}_{\varepsilon}$, we have
\begin{eqnarray}
u\sim v\Leftrightarrow \left\{ 
\begin{array}{ccc}
u & \bigtriangledown  & v\text{ if }a\neq b\text{ and }c\neq d \\ 
u & = & v\text{ \ \ \ \ \ \ \ \ \ \ \ \ \ otherwise}
\end{array}
\right. \text{.}
\end{eqnarray}
The structure $\left( \mathbb{S},\oplus ,\otimes \right)=\mathcal{P}_{\max}/_{\sim} $ is
called the \textbf{Symmetrized Max-Plus Algebra} (shortly: SMPA) and denoted by 
$\mathbb{S}_{\max }$. The semifield  $\mathbb{R}_{\max }$ can be identified with a 
subdioid of $\mathbb{S}_{\max }$, and $\mathbb{S}_{\max }$ can
be considered as a semimodule over 
$\mathbb{R}_{\max }$.

We have three types of equivalence classes of $\sim $  which are called max-plus positive, max-plus negative and balanced elements, respectively. Namely, for $u\in\mathcal{P}_{\varepsilon}$, let $\overline{u}=\{v\in\mathcal{P}_{\varepsilon}: u\sim v\}$. Then, for $a\in\mathbb{R}$, we get  
\begin{eqnarray}
\overline{\left( a,\varepsilon \right) } &=&\left\{ \left( a,b\right) \in 
\mathcal{P}_{\max }\text{: }a>b\right\}\text{,} \\
\overline{\left( \varepsilon ,a\right) } &=&\left\{ \left( a,b\right) \in 
\mathcal{P}_{\max }\text{: }a<b\right\}\text{.}
\end{eqnarray}
Moreover, for each $a\in\mathbb{R}_{\varepsilon}$, we get
\begin{eqnarray}
\overline{\left( a,a\right) } &=&\left\{ \left( a,a\right)\right\}\text{.}
\end{eqnarray}

If $a\in\mathbb{R}$, the element  $\oplus a=\overline{\left( a, \varepsilon\right)}$ of $\mathbb{S}$ is called \textbf{ max-plus positive}, while the element $\ominus a=\overline{\left(\varepsilon, a\right)}$ of $\mathbb{S}$ is called \textbf{max-plus negative}. If $a\in\mathbb{R}_{\varepsilon}$, the element $a^{\bullet }=\overline{\left( a, a\right)}$ of $\mathbb{S}$ is called \textbf{balanced}. The element $\varepsilon^{\bullet}$ is the \textbf{zero element} of $\mathbb{S}_{\max}$. Let 
$\mathbb{S}^{\oplus }$ be
the set of all elements of $\mathbb{S}$ which are either max-plus positive or equal to $\varepsilon^{\bullet}$. Let 
$\mathbb{S}^{\ominus }$ be the set of  all elements of $\mathbb{S}$ which are either max-plus negative or equal to $\varepsilon^{\bullet}$. Finally, let $\mathbb{S}^{\bullet }$ be the set of all balanced elements of $\mathbb{S}$. 
The set $\mathbb{S}$ can be written as the union $\mathbb{S}^{\oplus }\cup \mathbb{S}^{\ominus }\cup \mathbb{S}^{\bullet }$. The sets $\mathbb{S}^{\oplus}, \mathbb{S}^{\ominus}$ and $\mathbb{S}^{\bullet}$ have only one 
common point $\varepsilon^{\bullet}$. Let us put $\oplus\varepsilon=\ominus\varepsilon=\varepsilon^{\bullet}$.  We can identify $\mathbb{S}^{\oplus }$ with 
$\mathbb{R}_{\max}$ by assigning $\oplus a$ to $a\in\mathbb{R}_{\varepsilon}$.  Similarly, $\mathbb{S}^{\ominus }$ can be identified with 
$\ominus \mathbb{R}_{\max}=\{\ominus a: a\in\mathbb{R}_{\varepsilon}\}$, and $\mathbb{S}^{\bullet }$ can be identified with 
$(\mathbb{R}_{\max})^{\bullet}=\{a^{\bullet}: a\in\mathbb{R}_{\varepsilon}\}$.

\begin{definition}[cf. \cite{SM}]
Let $a\in \mathbb{S}$. The \textbf{max-plus positive part} $a^{\oplus }$ and the
\textbf{max-plus negative part} $a^{\ominus }$ of $a$ are defined as follows:
\begin{enumerate}
\item[(i)] if $a\in $ $\mathbb{S}^{\oplus }$, then, $a^{\oplus }$ $=a$ and $a^{\ominus
}=\varepsilon $; 
\item[(ii)] if $a\in $ $\mathbb{S}^{\ominus }$ then $a^{\oplus }$ $=$ 
$\varepsilon $ and $a^{\ominus }=$ $\ominus a$;
\item[(iii)] if $a\in $ $\mathbb{S}^{\bullet }$, then there exists $x\in 
\mathbb{R}_{\varepsilon }$ such that $a=$ $\overline{(x,x)}$ and then $a^{\oplus }=$ 
$a^{\ominus }$ $=x$.
\end{enumerate}
\end{definition}

It is clear from the previous definition that, for each $a\in \mathbb{S}$,  the following equality holds: 
 \begin{eqnarray}
 a=a^{\oplus }\ominus a^{\ominus }.
 \end{eqnarray}
This decomposition is unique if we allow only $a=x\ominus y$  
$(x,y\in \mathbb{S}^{\oplus})$  with either 
$x=\varepsilon$ or $y=\varepsilon$ or $x=y$.

\section{Topology and geometry in SMPA}

\subsection{Metrics on SMPA}

In this paragraph, we introduce  two distances on~$\mathbb{S}$.

\begin{proposition}[cf. \cite{SM}]
The formula
 \begin{eqnarray}
\left\vert a\right\vert=a^{\oplus}\oplus a^{\ominus }
\end{eqnarray}
defines an absolute value on $\mathbb{S}_{\max}$ as a dioid 
(or a norm with $\mathbb{S}_{\max}$ understood as a semimodule over 
the semifield $\mathbb{R}_{\max}$).
\end{proposition}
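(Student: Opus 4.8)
The plan is to reduce each axiom to an elementary max-plus computation performed on representatives in $\mathcal{P}_{\varepsilon}$, where the norm $|\cdot|_{\oplus}$ is already available, and then to transport the result to the quotient $\mathbb{S}$. The decisive preliminary step is to show that $|\cdot|_{\oplus}$ is constant on each $\sim$-class and that its value coincides with $a^{\oplus}\oplus a^{\ominus}$; that is, for every $a\in\mathbb{S}$ and every representative $u=(p,q)\in\mathcal{P}_{\varepsilon}$ with $a=\overline{u}$, one has $|a|=|u|_{\oplus}=p\oplus q$, so that in particular $|a|\in\mathbb{R}_{\varepsilon}=\mathbb{R}_{\max}$. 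I would verify this by inspecting the three kinds of classes listed in (6)--(8): on a max-plus positive class $\overline{(\alpha,\varepsilon)}$ every representative $(\alpha,b)$ satisfies $\alpha>b$, hence $|u|_{\oplus}=\alpha=a^{\oplus}=|a|$; the max-plus negative classes are symmetric; and on a balanced class $\overline{(\alpha,\alpha)}$, which is a singleton, $|u|_{\oplus}=\alpha=a^{\oplus}=a^{\ominus}=|a|$. This makes $|\cdot|$ a well-defined map $\mathbb{S}\to\mathbb{R}_{\max}$ agreeing with the displayed formula.

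With this identification, the axioms follow from the corresponding properties of $|\cdot|_{\oplus}$, each a one-line calculation. Definiteness: $|u|_{\oplus}=p\oplus q=\varepsilon$ forces $p=q=\varepsilon$, i.e. $u=(\varepsilon,\varepsilon)$, so $|a|=\varepsilon$ if and only if $a=\zero$. Multiplicativity: writing $u=(p,q)$ and $v=(r,s)$ and expanding the product by (3), the two entries of $u\otimes v$ are $p\otimes r\oplus q\otimes s$ and $p\otimes s\oplus q\otimes r$, so $|u\otimes v|_{\oplus}$ is the maximum of the four numbers $p\otimes r,\,p\otimes s,\,q\otimes r,\,q\otimes s$, which equals $(p\oplus q)\otimes(r\oplus s)=|u|_{\oplus}\otimes|v|_{\oplus}$; since $\otimes$ descends to $\mathbb{S}$ this yields $|a\otimes b|=|a|\otimes|b|$. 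Behaviour under $\oplus$: the entries of $u\oplus v$ are $p\oplus r$ and $q\oplus s$ by (2), so $|u\oplus v|_{\oplus}=p\oplus q\oplus r\oplus s=|u|_{\oplus}\oplus|v|_{\oplus}$, giving $|a\oplus b|=|a|\oplus|b|$. In particular the triangle inequality $|a\oplus b|\le|a|\oplus|b|$ holds, in fact as an equality, as is typical in the idempotent setting.

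To read the statement as a norm on the semimodule $\mathbb{S}_{\max}$ over $\mathbb{R}_{\max}$, I would specialise multiplicativity to a scalar $\lambda\in\mathbb{R}_{\max}$, which is identified with $\oplus\lambda=\overline{(\lambda,\varepsilon)}$ and so has $|\oplus\lambda|=\lambda\oplus\varepsilon=\lambda$. This gives $|\lambda\otimes a|=\lambda\otimes|a|$ for all $a\in\mathbb{S}$, i.e. positive homogeneity; the degenerate case $\lambda=\varepsilon$ is automatic because $\varepsilon$ is absorbing for $\otimes$. Together with definiteness and the triangle inequality, this is the complete list of norm axioms.

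The only genuine obstacle is the well-definedness asserted in the first paragraph. Since $\sim$ is the modified relation (5) rather than the intransitive balance relation $\bigtriangledown$ of (4), one cannot appeal to $\bigtriangledown$-invariance of $|\cdot|_{\oplus}$ directly and must instead argue constancy class by class, as above. Once that point is secured, everything else is routine max-plus arithmetic carried out on representatives, and no appeal to any form of the axiom of choice is needed.
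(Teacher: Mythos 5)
Your proof is correct, but it follows a genuinely different route from the paper's. The paper argues directly in $\mathbb{S}$: definiteness from $\max(a^{\oplus},a^{\ominus})=-\infty$, then multiplicativity by a case analysis on signs ($a,b\in\mathbb{S}^{\oplus}$; $a,b\in\mathbb{S}^{\ominus}$; $a\in\mathbb{S}^{\oplus}$, $b\in\mathbb{S}^{\ominus}$), and it only sketches the triangle property (``Similarly\dots''). You instead work with representatives in $\mathcal{P}_{\varepsilon}$ and make the compatibility statement $|\overline{u}|=|u|_{\oplus}$ your key lemma, proved class by class from the explicit descriptions (6)--(8) of the $\sim$-classes; after that, every axiom is a one-line identity in $\mathbb{R}_{\max}$ --- multiplicativity is distributivity, $\max(p+r,\,p+s,\,q+r,\,q+s)=\max(p,q)+\max(r,s)$, and additivity is exact, $|u\oplus v|_{\oplus}=|u|_{\oplus}\oplus|v|_{\oplus}$ --- and these identities descend to $\mathbb{S}$ because the quotient operations are computed on representatives. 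What your route buys: uniformity. It needs no sign case analysis at all, and in particular it covers the combinations involving balanced elements ($a$ or $b$ in $\mathbb{S}^{\bullet}$), which the paper's Step 2 never treats even though it concludes ``so in all cases''; it also turns the triangle equality, left as a sketch in the paper, into a trivial computation, and it cleanly delivers the homogeneity $|\lambda\otimes a|=\lambda\otimes|a|$ needed for the semimodule-norm reading. What the paper's route buys: by never leaving $\mathbb{S}$ and working with the decomposition $a=a^{\oplus}\ominus a^{\ominus}$, it avoids what you rightly call the only genuine obstacle of your approach, the well-definedness lemma; your handling of that obstacle is nevertheless correct, and you are right that it must be argued class by class, since $\sim$ is the modified relation (5) and invariance under the non-transitive balance relation $\bigtriangledown$ would not suffice.
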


\begin{proof}
Step 1). 
If $\left\vert a\right\vert=\varepsilon $, then 
\[
\max \left( a^{\oplus },a^{\ominus }\right) =-\infty \Leftrightarrow
a^{\oplus }=a^{\ominus }=-\infty \Leftrightarrow a=\varepsilon \text{.}
\].

Step 2). Naturally, we have
\[
\left\vert a\otimes b\right\vert=\left( a\otimes b\right)
^{\oplus }\oplus \left( a\otimes b\right) ^{\ominus }\text{.}
\]
We discuss three cases:

If $a$, $b\in \mathbb{S}^{\oplus }$, then 
\[
a=a^{\oplus }\text{, }b=b^{\oplus }\Rightarrow \left( a\otimes b\right)
^{\oplus }=a^{\oplus }\otimes b^{\oplus }\text{, }\left( a\otimes b\right)
^{\ominus }=\varepsilon 
\]
and 
\[
\left\vert a\right\vert=a^{\oplus }\text{, }\left\vert
b\right\vert=b^{\oplus }\text{,}
\]
which gives 
\[
\left\vert a\otimes b\right\vert=a^{\oplus }\otimes b^{\oplus
}=\left\vert a\right\vert\otimes \left\vert b\right\vert \text{.}
\]

If $a$, $b\in \mathbb{S}^{\ominus }$, then $a^{\ominus }=\ominus a$ and 
$a^{\oplus }=\varepsilon $ , $b^{\ominus }=\ominus b$ and $b^{\oplus}=
\varepsilon $, which yields 
\[
\left( \ominus a\right) \otimes \left( \ominus b\right) =\left( a\otimes
b\right) ^{\oplus }\text{,}
\]
then, 
\[
\left( a\otimes b\right) ^{\ominus }=\varepsilon \text{, }\left\vert
a\right\vert =\ominus a\text{, }\left\vert b\right\vert =\ominus b\text{,}
\]
which gives 
\[
\left\vert a\otimes b\right\vert =\left( a\otimes b\right)
^{\oplus }=\left( \ominus a\right) \otimes \left( \ominus b\right)
=\left\vert a\right\vert \otimes \left\vert b\right\vert \text{.}
\]

If $a\in \mathbb{S}^{\oplus }$, $b\in \mathbb{S}^{\ominus }$, then 
$a=a^{\oplus }$, $b^{\ominus }=\ominus b$ and $b^{\oplus }=\varepsilon $,
which yields 
\[
a\otimes \left( \ominus b\right) =\ominus \left( a\otimes b\right) =\left(
a\otimes b\right) ^{\ominus }\text{,}
\]
then, 
\[
\left( a\otimes b\right) ^{\oplus }=\varepsilon \text{, }\left\vert
a\right\vert =a,\left\vert b\right\vert =\ominus b\text{,}
\]
which gives
\[
\left\vert a\otimes b\right\vert =\left( a\otimes b\right)
^{\ominus }=a\otimes \left( \ominus b\right) =\left\vert a\right\vert \otimes 
\left\vert b\right\vert \text{.}
\]
So in all cases, the $\left\vert \cdot \right\vert $ is multiplicative.

Step 3). Similarly, we can prove the triangle inequality (equality, in fact).
\end{proof}

\begin{remark}
The max-plus absolute value $\left\vert \cdot \right\vert $ does not
produce a distance on $\mathbb{S}$ in the similar way as the usual absolute value 
produces a distance on $\mathbb{R}$ 
 because, if we define $\rho(x, y)=\vert x\ominus y\vert$ for $x, y\in\mathbb{S}$, then,  for $a\in\mathbb{S}$ with $a\neq \zero $, we get $
\rho\left( a,a\right) =\left\vert a\ominus a\right\vert =|a^{\bullet}|=|a|\neq \varepsilon$.
\end{remark}

\begin{corollary}
Each $a\in \mathbb{S}$ can be written as
\begin{eqnarray}
 a = \sgn(a) \: |a|
 \end{eqnarray}
where $\sgn(a)\in \{ \oplus, \ominus, ^{\bullet} \}$. If $a\neq \varepsilon^{\bullet}$, then this formula is  unique. 
\end{corollary}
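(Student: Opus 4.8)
The plan is to read the statement off the partition $\mathbb{S}=\mathbb{S}^{\oplus}\cup\mathbb{S}^{\ominus}\cup\mathbb{S}^{\bullet}$ together with the description of $a^{\oplus},a^{\ominus}$ in Definition 24 and the defining formula $|a|=a^{\oplus}\oplus a^{\ominus}$. For existence I would split into three cases according to which class contains $a$. If $a\in\mathbb{S}^{\oplus}$, then $a=\oplus b$ for some $b\in\mathbb{R}_{\varepsilon}$, and Definition 24 gives $a^{\oplus}=a$, $a^{\ominus}=\varepsilon$, whence $|a|=b$; setting $\sgn(a)=\oplus$ yields $\sgn(a)\,|a|=\oplus b=a$. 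If $a\in\mathbb{S}^{\ominus}$, then $a=\ominus b$, so $a^{\oplus}=\varepsilon$ and $a^{\ominus}=b$, giving $|a|=b$ and $\sgn(a)=\ominus$ with $\ominus b=a$. If $a\in\mathbb{S}^{\bullet}$, then $a=\overline{(x,x)}=x^{\bullet}$ with $x\in\mathbb{R}_{\varepsilon}$, so $a^{\oplus}=a^{\ominus}=x$, $|a|=x$, and $\sgn(a)={}^{\bullet}$ recovers $x^{\bullet}=a$. Each verification is a single substitution of the case-specific values of $a^{\oplus},a^{\ominus}$ into the formula for $|a|$, so existence is essentially a restatement of the decomposition $a=a^{\oplus}\ominus a^{\ominus}$ recorded in (10).

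For uniqueness when $a\neq\zero$, the crucial fact is that the three sets $\mathbb{S}^{\oplus},\mathbb{S}^{\ominus},\mathbb{S}^{\bullet}$ share only the single point $\zero$, as was noted when the classes were introduced; hence $a$ belongs to exactly one of them, and this determines $\sgn(a)$. I would first observe that $|a|\neq\varepsilon$: by Step 1 of Proposition 25, $|a|=\varepsilon$ forces $a=\zero$, so for $a\neq\zero$ the magnitude $|a|$ is a genuine element of $\mathbb{R}$. Then, given any representation $a=s_1\,m_1=s_2\,m_2$ with $s_i\in\{\oplus,\ominus,{}^{\bullet}\}$ and $m_i\in\mathbb{R}_{\varepsilon}$, each $m_i\neq\varepsilon$ (otherwise $s_i m_i=\zero$, since $\oplus\varepsilon=\ominus\varepsilon=\varepsilon^{\bullet}$), so that $\oplus m_i$, $\ominus m_i$, $m_i^{\bullet}$ are respectively a nonzero max-plus positive, negative and balanced element. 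The membership of $a$ in its unique class then forces $s_1=s_2=\sgn(a)$, and within a fixed sign the equality of the representing classes (for instance $\overline{(m_1,\varepsilon)}=\overline{(m_2,\varepsilon)}$ in the positive case) forces $m_1=m_2=|a|$.

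I expect no genuine obstacle here; the content is purely bookkeeping already prepared by the construction of $\mathbb{S}$. The one point that deserves care is to keep the scalar bottom $\varepsilon$ of $\mathbb{R}_{\varepsilon}$ distinct from the semiring zero $\zero=\varepsilon^{\bullet}$ of $\mathbb{S}_{\max}$, and to record explicitly why uniqueness fails precisely at $\zero$: there the convention $\oplus\varepsilon=\ominus\varepsilon=\varepsilon^{\bullet}$ makes all three admissible signs produce the same element, so $\sgn(\zero)$ is not determined.
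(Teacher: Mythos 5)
Your proof is correct and takes exactly the route the paper intends: the paper states this corollary without any proof, treating it as immediate from Definition 24, the formula $|a|=a^{\oplus}\oplus a^{\ominus}$, and the fact that $\mathbb{S}^{\oplus}$, $\mathbb{S}^{\ominus}$, $\mathbb{S}^{\bullet}$ intersect only in $\varepsilon^{\bullet}$. Your three-case verification of existence and the class-membership argument for uniqueness (including the observation that all three signs collapse to $\varepsilon^{\bullet}$ when the magnitude is $\varepsilon$) are precisely the bookkeeping the authors leave to the reader.
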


\begin{corollary}
One can write the formulas for actions in $\mathbb{S}$:
\begin{eqnarray}
a\oplus b= \sgn(a\oplus b) ( \: |a|\oplus |b|),\\
a\otimes b= \sgn(a)\: \sgn(b)\: (|a|\otimes |b|)
\end{eqnarray}
where $\sgn(a \oplus b)=\left\{
\begin{array}{ll}
\sgn(a), & |a|>|b|,\\ 
\sgn(b), & |b|>|a|,\\
\sgn(a), & |a|=|b|, \sgn(a)= \sgn(b),\\
\bullet, & |a|=|b|, \sgn(a)\neq \sgn(b),\end{array}
\right.$\\
and the multiplication table for signs is obvious.
\end{corollary}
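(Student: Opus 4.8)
The plan is to read both displayed formulas directly off the quotient construction, working in ``polar coordinates'' $(\sgn(a),|a|)$ via the canonical representatives supplied by Corollary 27. Recall that an element $a\in\mathbb{S}$ with $|a|=\alpha\in\mathbb{R}_\varepsilon$ is represented by $(\alpha,\varepsilon)$ when $\sgn(a)=\oplus$, by $(\varepsilon,\alpha)$ when $\sgn(a)=\ominus$, and by $(\alpha,\alpha)$ when $\sgn(a)=\bullet$; in each case the max-plus norm $|u|_\oplus=x\oplus y$ of the representative equals $\alpha$, and by (7)--(9) this value is constant on the equivalence class. Thus an element of $\mathbb{S}$ is completely determined by the pair $(\sgn(a),|a|)$, and it suffices to compute $\sgn$ and $|\cdot|$ of $a\oplus b$ and of $a\otimes b$ from the operations (3)--(4) of $\mathcal{P}_{\max}$ descended to the quotient.

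First I would dispose of the magnitude factors. For $\oplus$, if $(x_1,y_1)$ and $(x_2,y_2)$ represent $a$ and $b$, then $(x_1\oplus x_2,\,y_1\oplus y_2)$ represents $a\oplus b$, whose norm is $x_1\oplus y_1\oplus x_2\oplus y_2=|a|\oplus|b|$; this is exactly the magnitude factor in the first formula. For $\otimes$, the equality $|a\otimes b|=|a|\otimes|b|$ is precisely the multiplicativity of $|\cdot|$ proved in Step 2 of the proof of Proposition 25, so nothing new is needed there.

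It then remains to pin down the signs, and here I would simply run through the finitely many combinations of $\sgn(a),\sgn(b)\in\{\oplus,\ominus,\bullet\}$, substituting the canonical representatives into (3) and (4) and identifying the class of the resulting pair by (7)--(9). For $\oplus$ one finds: if $|a|>|b|$ the coordinatewise maxima place the sum in the same ray as $a$, so $\sgn(a\oplus b)=\sgn(a)$, and symmetrically for $|b|>|a|$; while if $|a|=|b|=\alpha$ the sum is represented by $(\alpha,\varepsilon)$, $(\varepsilon,\alpha)$ or $(\alpha,\alpha)$ according as the two equal signs agree or disagree, the disagreement forcing the balanced representative $(\alpha,\alpha)$ and hence $\sgn(a\oplus b)=\bullet$. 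This reproduces the four-way case distinction in the statement. For $\otimes$, inserting the canonical representatives into $(x_1\otimes x_2\oplus y_1\otimes y_2,\ x_1\otimes y_2\oplus y_1\otimes x_2)$ yields a positive pair when both signs are non-balanced and equal, a negative pair when they are non-balanced and opposite, and the balanced pair $(\alpha+\beta,\alpha+\beta)$ whenever at least one factor is balanced; this is exactly the asserted sign table, with $\bullet$ absorbing and the two non-balanced signs obeying the usual rule of signs.

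All of these computations are elementary, so the only genuine care needed is the bookkeeping of checking that each resulting pair really falls into the claimed class of (7)--(9) rather than onto a boundary between them. I expect the single delicate point to be the equal-norm case of $\oplus$ with opposite signs, where the two competing coordinates coincide and collapse the sum onto the balanced ray; once this is verified, both formulas and the sign table follow at once. The degenerate instances $|a|=\varepsilon$ or $|b|=\varepsilon$ (a summand or factor equal to the zero $\zero$) are consistent with the conventions $\oplus\varepsilon=\ominus\varepsilon=\zero$ and need no separate treatment.
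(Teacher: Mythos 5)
Your proof is correct, and it is essentially the verification the paper itself relies on: the paper states this corollary without proof, treating it as immediate from the sign--magnitude decomposition of Corollary 27, and your case-by-case computation with canonical representatives in $\mathcal{P}_{\max}$ is exactly that omitted routine check (including the one genuinely delicate case, equal norms with distinct signs collapsing onto the balanced ray). The only slight imprecision is your citation of Step 2 of Proposition 25 for $|a\otimes b|=|a|\otimes|b|$ — the paper's Step 2 only treats factors in $\mathbb{S}^{\oplus}\cup\mathbb{S}^{\ominus}$ — but your own substitution of representatives covers the balanced factors anyway, so nothing is missing.
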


\begin{definition} The \textbf{canonical embedding} of $\mathbb{S}$ into $\mathbb{C}$ is the mapping  $\Phi :\mathbb{S} \to \mathbb{C}$ defined in the following way:
for  $e^{-\infty}=0$, $\theta=\frac{-1+\sqrt{3}i}{2}$ and any $r\in\mathbb{R}_{\varepsilon}$, we put
$$\Phi(\oplus r)=\theta e^r, \quad \Phi(\ominus r)=\theta^2 e^r, \quad \Phi(r^{\bullet})= e^r.$$
\end{definition}

We denote the Euclidean metric in $\mathbb{R}^n$ by $d_{e,n}$ or by $d_{e}$ when it clear what $n$ is. In particular, for the plane $\mathbb{C}=\mathbb{R}^2$, we have $d_e=d_{e,2}$.  As we have already mentioned, $\mathbb{R}^n$ will be usually considered with its Euclidean metric $d_e$ and with the natural topology induced by $d_e$. 

\begin{definition} The \textbf{natural} (or \textbf{usual}) topology of $\mathbb{S}$ is the smallest topology in $\mathbb{S}$ which makes the mapping $\Phi:\mathbb{S}\to\mathbb{C}$ continuous.
\end{definition}

 We shall consider two natural metrics in $\mathbb{S}$ that are defined below:

\begin{definition}
Let $d_1:$ $\mathbb{S\times S}\mathbb{\longrightarrow }\mathbb{R}$ be defined as follows:  
$$d_1(a,b)=d_e(\Phi(a),\Phi(b))$$
for all $a, b\in\mathbb{S}$.  The metric $d_1$ in $\mathbb{S}$  will be called the \textbf{Euclidean distance} in $\mathbb{S}$. 
\end{definition}

\begin{definition}
Let $d_2: \mathbb{S\times S}\mathbb{\longrightarrow }\mathbb{R}$ be defined as follows: 
\[
d_2\left( a,b\right) =\left\{ 
\begin{array}{ccc}
\left\vert \exp |a|-\exp |b|\right\vert & \text{if} & \sgn(a)=\sgn(b) \\ 
\exp |a|+\exp |b| & \text{if} & \sgn(a)\neq \sgn(b)
\end{array}
\right. \text{} 
\]
for all $a, b\in\mathbb{S}$.  The metric $d_2$  in $\mathbb{S}$ will be called the \textbf{inner distance} in $\mathbb{S}$.
\end{definition}

\begin{remark}
It is clear that both Euclidean and inner distances in $\mathbb{S}$ are equivalent and they induce the natural topology of $\mathbb{S}$; moreover, the metrics $d_1$ and $d_2$ are uniformly equivalent and equal  on each of the lines $\mathbb{S}^{\oplus}$, $\mathbb{S}^{\ominus}$, $\mathbb{S}^{\bullet}$. As a topological space, we shall consider $\mathbb{S}$ only with its natural topology. The space $\mathbb{S}$ is homeomorphic with the subspace $\Phi(\mathbb{S})$ of $\mathbb{C}=\mathbb{R}^2$. The mapping $\Phi$ is an isometry of the metric space $(\mathbb{S}, d_1)$ onto the metric subspace $\Phi(\mathbb{S})$ of $(\mathbb{C}, d_e)$. The metric space $(\mathbb{S}, d_2)$ is isometric with:

1)  the metric subspace $(\mathbb{R}\times\{0\})\cup(\{0\}\times [0, +\infty))$ of the metric space $(\mathbb{R}^2, \rho_2)$ where $\rho_2(x, y)=\vert x(0)-y(0)\vert+\vert x(1)-y(1)\vert$ for all $x, y\in\mathbb{R}^2$;

2) the metric subspace $\{z\in \mathbb{R}^2:  z(1)=\vert z(0)\vert \vee (z(0)=0 \wedge z(1) <0 \}$ of the metric space $(\mathbb{R}^2, \rho_0)$ where $\rho_0(x, y)=
\max(\vert x(0)-y(0)\vert,\vert x(1)-y(1)\vert )$ for all $x, y\in\mathbb{R}^2$.
\end{remark}

Let $X$ be a topological space. We recall that it is said that a point $c\in X$ \textbf{disconnects} $X$ \textbf{between points} $a, b\in X$ if there exists a pair $U, V$ of disjoint open sets in $X$ such that $X\setminus\{ c\}= U\cup V$ and $a\in U$, while $b\in V$. The space $X$ is a \textbf{dendrite} if it is a continuum such that, for each pair $a, b$ of distinct points of $X$, there exists a point $c\in X$ which disconnects $X$ between points $a$ and $b$.  It is said that $X$ is a \textbf{semicontinuum} if, for each pair $a, b$ of points of $X$, there is a continuum $C$ in $X$ such that $a, b\in C$.  Let us say that  $X$  is a \textbf{semidendrite} if it is a semicontinuum such that, for each pair $a, b$ of distinct points of $X$, there exists a point $c\in X$ which disconnects $X$ between $a$ and $b$.  

\begin{proposition} The space $\mathbb{S}$ is a semidendrite.
\end{proposition}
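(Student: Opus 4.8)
The plan is to transport the problem to the image $T=\Phi(\mathbb{S})\subseteq\mathbb{C}$, which is legitimate because, as recorded in the Remark following Definition of $d_2$, $\Phi$ is a homeomorphism of $\mathbb{S}$ onto the subspace $T$. Since $\theta=\frac{-1+\sqrt{3}i}{2}$ is a primitive cube root of unity, the three directions $1,\theta,\theta^{2}$ make angles $0,\tfrac{2\pi}{3},\tfrac{4\pi}{3}$, and the defining formulas for $\Phi$ show that $T$ is the union of the three closed rays
$$R_{\bullet}=\{t:t\ge 0\},\quad R_{\oplus}=\{\theta t:t\ge 0\},\quad R_{\ominus}=\{\theta^{2}t:t\ge 0\},$$
which pairwise meet only at the origin $0=\Phi(\zero)$. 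Thus $T$ is a planar triod, and I shall verify the two defining properties of a semidendrite directly on $T$.

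First I would show that $T$ is a semicontinuum. Given $p,q\in T$, let $S_{p}$ be the straight segment in $\mathbb{C}$ joining $p$ to $0$ and $S_{q}$ the segment joining $q$ to $0$; by the structure of $T$ each of these segments lies in a single ray, hence in $T$, and each is a continuous image of $[0,1]$, so each is compact and connected in $\mathbf{ZF}$. Their union $S_{p}\cup S_{q}$ is then compact, being a union of two compact sets, and connected, since its two pieces share the point $0$; that is, $S_{p}\cup S_{q}$ is a continuum containing both $p$ and $q$. Hence any two points of $T$ lie in a common continuum.

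Next I would establish the disconnection property. Let $p,q\in T$ be distinct. If some single ray contains both $p$ and $q$ (in particular whenever one of them is $0$), write $p=\theta^{j}s$ and $q=\theta^{j}t$ with $0\le s<t$ for the appropriate $j\in\{0,1,2\}$; choosing $c=\theta^{j}u$ with $s<u<t$, I take $U=\{\theta^{j}r:r>u\}$ to be the far stub and let $V$ be the remainder of $T\setminus\{c\}$, namely the near stub together with the two other full rays, all joined through $0$. Otherwise $p$ and $q$ are nonzero and lie on two different rays; then I take $c=0$, let $U$ be the punctured ray through $p$, and let $V$ be the union of the other two punctured rays. In every case $U$ and $V$ are disjoint, cover $T\setminus\{c\}$, and separate $p$ from $q$, so it remains only to check that they are open in the subspace topology of $T$.

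The one place demanding a little care is precisely this openness, and especially its behaviour at the branch point $0$: here I would observe that the three rays separate near the origin, so a sufficiently small Euclidean disc about $0$ meets $T$ in three disjoint initial arcs, which keeps $0$ in the interior of whichever piece contains it; at every other point of $U$ or $V$ openness is immediate because a small disc meets $T$ inside one ray only. Apart from this bookkeeping, the argument is wholly constructive, using only that segments and rays are connected and that closed bounded segments are compact, both theorems of $\mathbf{ZF}$, so no form of choice enters. I therefore expect the main obstacle to be merely the careful openness verification at the branch point rather than any substantive difficulty.
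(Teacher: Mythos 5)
Your proof is correct and takes essentially the same approach as the paper: the paper's entire argument is the one-line observation that $\mathbb{S}$ is homeomorphic via $\Phi$ to the planar triod $\Phi(\mathbb{S})$, which it declares to be a semidendrite. Your explicit verification that this triod is a semicontinuum with the disconnection property (including the openness check at the branch point $0$) simply supplies, correctly and in $\mathbf{ZF}$, the details the paper leaves to the reader.
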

\begin{proof} It suffices to notice that $\mathbb{S}$ is homeomorphic with the semidendrite $\Phi(\mathbb{S})$ on the plane $\mathbb{C}$.
\end{proof}

Basic properties of dendrites are nicely described in \cite{Ch} and \cite{M}. Some of the properties can be easily proved in \textbf{ZF} and they are also properties of semidendrites. For example, that every semidendrite is a Hausdorff space and that every semicontinuum contained in a semidendrite is a semidendrite are statements obviously true in \textbf{ZF}.  However, the axiom of choice is heavily used, for instance,  in an adaptation to  metrizable semidendrites of the known proof for metrizable dendrites of the following theorem:

\begin{theorem} It holds true in \textbf{ZFC} that, for each pair $a, b$ of distinct points of a metrizable semidendrite $X$,  there exists in $X$ exactly one arc with end-points $a$ and $b$. 
\end{theorem}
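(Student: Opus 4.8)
The plan is to reduce both the existence and the uniqueness of the connecting arc to the corresponding classical facts about metrizable dendrites, so that no separate ``semidendrite version'' of the arc construction has to be carried out by hand. The two structural observations that make this possible are already recorded in the text above: every semicontinuum contained in a semidendrite is again a semidendrite (a \textbf{ZF} statement), and a semidendrite that happens to be a continuum is, directly from the definitions given before the statement, a dendrite. Since metrizability is inherited by subspaces, every subcontinuum of $X$ is automatically a metrizable dendrite.

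First I would prove existence. Fix distinct points $a, b\in X$. Since $X$ is a semicontinuum, there is a continuum $C\subseteq X$ with $a, b\in C$. Then $C$ is a semicontinuum inside the semidendrite $X$, hence a semidendrite, and being a continuum it is a metrizable dendrite; it is nondegenerate because it contains the two distinct points $a$ and $b$. By the classical theorem on arcwise connectedness of metrizable dendrites (see \cite{Ch} and \cite{M}), there is an arc $A\subseteq C$ with end-points $a$ and $b$. Because the topology that $A$ inherits from $X$ coincides with the one it inherits from $C$, the set $A$ is an arc in $X$ with end-points $a$ and $b$.

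Next I would prove uniqueness. Let $A_1$ and $A_2$ be arcs in $X$, each having end-points $a$ and $b$. Their union $K=A_1\cup A_2$ is compact, and it is connected because $A_1$ and $A_2$ both contain $a$; thus $K$ is a nondegenerate continuum. As before, $K$ is a metrizable dendrite, and each $A_i$ is an arc in $K$ with end-points $a$ and $b$ (again by transitivity of the subspace topology). By the classical theorem asserting that a metrizable dendrite is \emph{uniquely} arcwise connected, $K$ contains at most one arc with the prescribed end-points, whence $A_1=A_2$.

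I expect the genuine difficulty to be entirely concentrated in the invoked classical dendrite theorem, which is exactly where the axiom of choice enters and why the statement must be phrased in \textbf{ZFC} rather than in \textbf{ZF}; the passage from dendrites to semidendrites is itself choice-free. Should one insist on a self-contained argument, the hard part would be to re-run the standard arc construction inside the semidendrite directly. There the only clause that transfers for free is that every point $c$ disconnecting $X$ between $a$ and $b$ must lie on every arc from $a$ to $b$ (otherwise that arc, being connected and containing $a$ and $b$, would split nontrivially across the disjoint open sets witnessing the disconnection), so that all such arcs share their separating points; turning this into full coincidence still requires the cut-point and local-structure machinery of dendrite theory, which is the step where choice is actually consumed.
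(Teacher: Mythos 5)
Your argument is correct, but it follows a genuinely different route from the paper's: the paper in fact gives no written proof of Theorem 35 at all --- the remark preceding it merely indicates that one should adapt, with heavy use of the axiom of choice, the known arc-construction proof for metrizable dendrites to the semidendrite setting. You instead reduce the statement to the dendrite theorem used as a black box: under the paper's definitions every subcontinuum of a semidendrite is a semicontinuum, hence a semidendrite, hence (being a continuum) a dendrite, so both the continuum $C$ witnessing the semicontinuum property and the union $K=A_1\cup A_2$ of two competing arcs are metrizable dendrites, and existence and uniqueness of the connecting arc follow from the classical theorem applied inside $C$ and inside $K$ respectively. This reduction buys something the suggested adaptation does not make explicit: the passage from dendrites to semidendrites is choice-free, so over \textbf{ZF} Theorem 35 is equivalent to its special case for dendrites; in particular, Problem 36 thereby reduces to the question of whether the classical unique-arcwise-connectedness theorem for metrizable dendrites is itself provable in \textbf{ZF}. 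One input deserves a more precise citation than the blanket reference you give: since the paper defines dendrites by the point-separation property, applying the classical theorem requires Whyburn's characterization (recorded in \cite{Ch}) that a metric continuum in which every two distinct points are separated by a third point is a dendrite in the usual sense, i.e.\ a locally connected continuum containing no simple closed curve; with that equivalence in hand, your two applications of the classical theorem are exactly right.
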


\begin{problem} Can $\mathbf{ZFC}$ be replaced by $\mathbf{ZF}$ in Theorem 35?
\end{problem}

\subsection{Continuity of the inner operations}

In this paragraph, we study the continuity and the discontinuity of the inner
operations in $\mathbb{S}_{\max}$ with respect to the usual topology in ~$\mathbb{S}$. 
The following proposition is obvious:

\begin{proposition}
The operation $\otimes $ is continuous in the usual topology of~$\mathbb{S}$.
\end{proposition}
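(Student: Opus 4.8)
The plan is to transport everything to the complex plane through the canonical embedding $\Phi$ of Definition 29. By Remark 33 the map $\Phi$ is a homeomorphism of $\mathbb{S}$ onto the subspace $S=\Phi(\mathbb{S})$ of $\mathbb{C}$, and, by Definition 30, the natural topology of $\mathbb{S}$ is exactly the one induced by $\Phi$; hence $\Phi\times\Phi$ is a homeomorphism of $\mathbb{S}\times\mathbb{S}$ onto $S\times S$. Consequently, $\otimes$ is continuous if and only if the transported operation $*\colon S\times S\to S$, defined by $\Phi(a)*\Phi(b)=\Phi(a\otimes b)$, is continuous. Since $S$ carries the subspace topology of $\mathbb{C}$, a map into $S$ is continuous precisely when it is continuous as a map into $\mathbb{C}$, so it is enough to prove that $*\colon S\times S\to\mathbb{C}$ is continuous.

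First I would record the two structural facts that drive the argument. The set $S$ is the union of the origin $\Phi(\zero)=0$ and the three open rays emanating from it at arguments $0$, $\tfrac{2\pi}{3}$, $\tfrac{4\pi}{3}$, namely the images of $\mathbb{S}^{\bullet}$, $\mathbb{S}^{\oplus}$, $\mathbb{S}^{\ominus}$ with $\zero$ removed. From Definition 29 the complex modulus of $\Phi(x)$ equals $\exp|x|$ (the real exponential, with $\exp(-\infty)=0$), because $|\theta|=|\theta^{2}|=1$. Combining this with the multiplicativity of $|\cdot|$ established in Proposition 25, namely $|a\otimes b|=|a|\otimes|b|=|a|+|b|$, I obtain the key identity
\[
|\Phi(a)*\Phi(b)|=\exp(|a|+|b|)=\exp|a|\cdot\exp|b|=|\Phi(a)|\cdot|\Phi(b)| .
\]
Thus the modulus of a $*$-product is the product of the moduli, a function continuous on all of $\mathbb{C}\times\mathbb{C}$. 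The remaining datum, the ray on which $\Phi(a\otimes b)$ lies, is controlled by the sign rule $\sgn(a\otimes b)=\sgn(a)\,\sgn(b)$ of Corollary 28, which depends only on the sectors containing the two factors.

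Next I would check continuity at each point $(z_{0},w_{0})\in S\times S$ separately. If $z_{0}\neq0$ and $w_{0}\neq0$, then, since the three rays are disjoint away from the origin, every sufficiently small perturbation of each factor inside $S$ stays on the same ray; hence the signs of both factors, and therefore the sign (ray) of the product, are locally constant, while the modulus $|z|\,|w|$ varies continuously. Writing $z*w$ as its modulus times the fixed unit direction of that ray shows that $*$ is continuous at $(z_{0},w_{0})$. If instead a factor vanishes, say $z_{0}=0$, then $\zero$ is absorbing for $\otimes$, so $z_{0}*w_{0}=0$; and by the displayed identity $|z*w|=|z|\,|w|\to0$ as $(z,w)\to(0,w_{0})$, because $|w|$ stays bounded near $w_{0}$. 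Hence $z*w\to0$ irrespective of its argument, and $*$ is continuous at $(z_{0},w_{0})$ as well. These two cases exhaust $S\times S$.

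The only delicate point, and thus the main obstacle, is the behaviour at the origin, where the three rays meet and the sign (direction) assignment is genuinely discontinuous. What makes the proof go through is that this discontinuity is harmless: it is swallowed by the vanishing of the modulus factor, so the $*$-product still depends continuously on its arguments. Finally, since $S$ and $S\times S$ are explicit metric subspaces of Euclidean spaces, all the neighbourhood (equivalently, $\varepsilon$--$\delta$) estimates above are carried out directly and use no form of choice; the argument is therefore valid in $\mathbf{ZF}$.
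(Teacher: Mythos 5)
Your proposal is correct. Note that the paper offers no proof at all here: Proposition 37 is introduced with the words ``The following proposition is obvious,'' so your argument simply supplies the details the authors left implicit, and it is exactly the intended reasoning: transport the operation through $\Phi$, observe that moduli multiply (by Corollary 28, $|a\otimes b|=|a|+|b|$, so $|\Phi(a\otimes b)|=|\Phi(a)|\,|\Phi(b)|$) while the sign rule is locally constant on the three punctured rays, and check that the only genuine danger --- the discontinuity of the sign assignment at $\varepsilon^{\bullet}$ --- is absorbed by the vanishing of the modulus factor, since $\varepsilon^{\bullet}$ is absorbing for $\otimes$. This last observation is what makes $\otimes$ continuous even though $\oplus$ (Proposition 38) is not, so making it explicit is a worthwhile addition rather than mere pedantry.
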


\begin{proposition}
The operation $\oplus $ is discontinuous in the usual topology of 
$\mathbb{S}$.
\end{proposition}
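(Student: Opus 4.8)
The plan is to show that $\oplus$ fails to be continuous at a single carefully chosen point, which suffices to establish discontinuity of the operation. First I would exploit the decomposition $\mathbb{S}=\mathbb{S}^{\oplus}\cup\mathbb{S}^{\ominus}\cup\mathbb{S}^{\bullet}$ together with the sign-based formula
$$a\oplus b=\sgn(a\oplus b)\,(|a|\oplus|b|)$$
from Corollary 30, and in particular its fourth clause: when $|a|=|b|$ but $\sgn(a)\neq\sgn(b)$, the result jumps to a balanced element. This is exactly where the trouble lives, because a small perturbation of $a$ or $b$ can break the equality $|a|=|b|$ and thereby flip the sign of $a\oplus b$ away from $\bullet$, producing a discontinuity.

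Concretely, I would take a point such as $a=\oplus r$ and $b=\ominus r$ for a fixed real $r$, so that $|a|=|b|=r$ and $a\oplus b=r^{\bullet}$, a balanced element. The idea is then to produce a sequence (or, to stay safely inside $\mathbf{ZF}$ and the topological definition of continuity, a net or simply an explicit family of nearby points) converging to $(a,b)$ in the product topology $\mathbb{S}\times\mathbb{S}$ along which $\oplus$ does not converge to $r^{\bullet}$. For instance, replacing $b=\ominus r$ by $b_s=\ominus s$ with $s<r$ and $s\to r^{-}$ keeps $b_s\to b$ in the natural topology, yet $a\oplus b_s=\oplus r$ for every such $s$ since $|a|=r>s=|b_s|$ forces $\sgn(a\oplus b_s)=\sgn(a)=\oplus$. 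Thus $a\oplus b_s=\oplus r$ stays constantly on the line $\mathbb{S}^{\oplus}$ and cannot approach the balanced point $r^{\bullet}\in\mathbb{S}^{\bullet}$, because the three lines $\mathbb{S}^{\oplus},\mathbb{S}^{\ominus},\mathbb{S}^{\bullet}$ meet only at $\zero$ and $r^{\bullet}\neq\oplus r$ when $r\neq\varepsilon$.

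To make this rigorous in the natural topology I would translate the claim through the canonical embedding $\Phi$ of Definition 31, since by Remark 34 the space $\mathbb{S}$ is homeomorphic to $\Phi(\mathbb{S})\subseteq\mathbb{C}$ and $\Phi$ is an isometry onto its image for $d_1$. Under $\Phi$ the point $r^{\bullet}$ maps to $e^r$ on the positive real axis, while $\oplus r$ maps to $\theta e^r$; these are at positive Euclidean distance $|1-\theta|\,e^r>0$, so the values $\Phi(a\oplus b_s)=\theta e^r$ remain bounded away from the intended limit $\Phi(r^{\bullet})=e^r$. This makes the failure of continuity quantitative and verifiable directly from the metric $d_1$, avoiding any appeal to sequential arguments that could be problematic in $\mathbf{ZF}$.

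The main obstacle, such as it is, is purely bookkeeping: one must confirm that the chosen family $(a,b_s)$ genuinely converges to $(a,b)$ in the product natural topology, which again reduces via $\Phi$ to the elementary fact that $\theta^2 e^{s}\to\theta^2 e^{r}=\Phi(\ominus r)=\Phi(b)$ as $s\to r^{-}$, and that the sign-clause computation of $\sgn(a\oplus b_s)$ is correct for all $s$ in the approaching range. No deep idea is required beyond locating the discontinuity at the juncture of the three half-lines; I expect the entire argument to be short once the point $(\oplus r,\ominus r)$ and its approximating family are fixed.
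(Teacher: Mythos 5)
Your proposal is correct and follows essentially the same route as the paper: both exploit the tie-breaking clause of the sign formula at a point where $|a|=|b|$ but $\sgn(a)\neq\sgn(b)$ (your point $(\oplus r,\ominus r)$ is in fact the first one the paper lists), and both perturb one coordinate along a single half-line so that the output stays on $\mathbb{S}^{\oplus}$ at fixed positive distance from the balanced value $r^{\bullet}$. The paper merely states this as a comparison of two one-sided limits at $(\oplus r, r^{\bullet})$, while you spell out the metric estimate via $\Phi$; the substance is identical.
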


\begin{proof}
For $r\in \mathbb{R}$, we have discontinuity of $\oplus : \mathbb{S}\times \mathbb{S}\to \mathbb{S}$ at $(\oplus r,\ominus r)$,  $(\ominus r, \oplus r)$, $(\oplus r, r^{\bullet})$,  $( r^{\bullet}, \oplus r)$, $(\ominus r, r^{\bullet})$,  and $( r^{\bullet}, \ominus r)$. For example:
$$ \limes\limits_{s\to r^-} \oplus(\oplus s, r^{\bullet})=r^{\bullet}, \qquad
  \limes\limits_{s\to r^+} \oplus(\oplus s, r^{\bullet})=\oplus r.
 $$
\end{proof} 

\begin{remark}
Let $\tau$ be the quotient topology induced by the mapping $|\cdot |: \mathbb{S}\to \mathbb{R}_{\varepsilon}$ where $\mathbb{R}_{\varepsilon}$ is equipped with its standard topology. The topology $\tau$ is not $T_0$; however, both operations $\oplus$ and $\otimes$ from $(\mathbb{S}, \tau)\times(\mathbb{S}, \tau)$ to $(\mathbb{S}, \tau)$ are continuous.
\end{remark}

\section{Topology and geometry of finite products of SMPA}

Let $n\in\omega \setminus \{ 0\}$. Then the set $\mathbb{S}^n$ of all functions $x: n\to\mathbb{S}$  is identified with the $n$-Cartesian product of $\mathbb{S}$. In particular, $\mathbb{S}^1$ is identified with $\mathbb{S}$. The \textbf{usual} or \textbf{natural topology} of $\mathbb{S}^n$ is the product topology of the space $\mathbb{S}^n$ where $\mathbb{S}$ is equipped with its usual topology (see Definition 30). 

\subsection{The main metrics in $\mathbb{S}^n$}

We consider the Euclidean metric $d_1$ and the inner metric $d_2$ in $\mathbb{S}$. For $j\in\{1, 2\}$,  it is most natural to associate with the distance function $d_j$ the following metrics $\rho_{0,j}, \rho_{1,j}$ and $\rho_{2,j}$ in $\mathbb{S}^n$:
\begin{eqnarray*}
\rho_{0,j}(x, y)& = & \max\{d_j(x(i), y(i)): i\in n\},\\
\rho_{1,j}(x, y)&=&\sqrt{\sum_{i\in n}d_j(x(i), y(i))^2},\\
\rho_{2,j}(x, y)& = & \sum_{i\in n}d_j(x(i), y(i))
\end{eqnarray*}
 where $x,y\in\mathbb{S}^n$. Of course, the metrics $\rho_{k,j}$  are all equivalent and they induce the natural topology of $\mathbb{S}^n$. We shall pay a special attention to the metrics $\rho_{1,1}$ and $\rho_{1,2}$.
 
\begin{definition} The metric $\rho_{1,1}$ will be denoted by $D_1$ and called the \textbf{Euclidean metric} in $\mathbb{S}^n$, while the metric $\rho_{1,2}$ will be denoted by $D_2$ and called the \textbf{inner metric} in $\mathbb{S}^n$.
 \end{definition}
 
\begin{definition} For the canonical embedding $\Phi$ of $\mathbb{S}$ into $\mathbb{C}$, let $\Phi_i=\Phi$ where $i\in n$. Then the product $\prod_{i\in n}\Phi_i$ will be denoted by $[^n\Phi]$ and called the \textbf{canonical embedding} of $\mathbb{S}^n$ into $\mathbb{C}^n$.
\end{definition}

\begin{remark} The mapping $[^n\Phi]$ is an isometry of the metric space $(\mathbb{S}^n, D_1)$ onto the metric subspace $[^n\Phi](\mathbb{S}^n)$ of $(\mathbb{C}^n, d_{e,2n})$ where the set $\mathbb{C}^n$ is identified with $\mathbb{R}^{2n}$ in an obvious way.
\end{remark}
 
\begin{remark} For $x, y\in\mathbb{S}^n$, let $L_{x,y}$ be the shortest broken line in $\mathbb{C}^n$ which connects $[^n\Phi](x)$ with 
$[^n\Phi](y)$ and is contained in $[^n\Phi](\mathbb{S}^n)$. Then $D_2(x, y)$ is the length in $\mathbb{C}^n$ of the curve $L_{x,y}$, while
$$[x, y]_{D_2}=[^n\Phi]^{-1}(L_{x,y}).$$  
\end{remark}

\subsection{Geometric segments as broken lines}

For $x, y\in\mathbb{R}^n$ (or $x, y\in\mathbb{C}^n$, resp.), let $[x, y]=\{ (1-t)x+ty: 0\leq t\leq 1\}$ be the traditional segment with end-points $x, y$.  

\begin{definition} Let $x, y\in\mathbb{S}^n$. Then:
\begin{enumerate}
\item[(i)]  the $D_2$-segment in $\mathbb{S}^n$ between $x$ and $y$ will be denoted by $[x, y]_g$ and called the \textbf{geometric segment} between $x$ and $y$;
\item[(ii)] if the traditional segment $[[^n\Phi](x), [^n\Phi](y)]$ in $\mathbb{C}^n$ is a subset of $[^n\Phi](\mathbb{S}^n)$, then $[^n\Phi]^{-1}([[^n\Phi](x), [^n\Phi](y)])$ will be called the \textbf{traditional segment} in $\mathbb{S}^n$ between $x$ and $y$.
\end{enumerate}
\end{definition}

For an arbitrary geometric segment $[a, b]_g$, let us demonstrate a method of finding a possibly small finite collection $\mathcal{C}$ of traditional segments such that $[a, b]_g=\bigcup\mathcal{C}$.

\begin{definition} For distinct $u,v\in\{\oplus, \ominus, \bullet\}$, we define an embedding $\Psi_{(u,v)}:\mathbb{S}^{u}\cup\mathbb{S}^{v}\to\mathbb{R}$ as follows:  for each $r\in\mathbb{R}$, we put ${\bullet}r=r^{\bullet}$,  
$\Psi_{(u,v)}(ur)= e^{r}$, while $\Psi_{(u,v)}(vr)= -e^{r}$. We put $\Psi_{(u,v)}(\varepsilon^\bullet)=0$. Moreover, let $\Psi_{(u, u)}:\mathbb{S}^u\to [0; +\infty)$ be defined by the equality $\Psi_{(u,u)}(x)=\Psi_{(u, v)}(x)$ for each $x\in\mathbb{S}^u$ and each $v\in\{\oplus, \ominus, \bullet\}\setminus\{u\}$.
\end{definition}

Now, let us fix $n\in\omega \setminus \{ 0\}$ and let $a, b\in\mathbb{S}^n$ be given. Assume that $a\neq b$.  For each $j\in n$ we choose an ordered pair $(u(j),v(j))\in\{\oplus, \ominus, \bullet\}\times\{\oplus, \ominus, \bullet\}$ such that $a(j)\in S^{u(j)}$ and $b(j)\in S^{v(j)}$. Let $\Psi=\prod_{j\in n}\Psi_{(u(j), v(j))}$. To simplify notation, put $\Psi_j=\Psi_{(u(j),v(j))}$ for $j\in n$. Consider the traditional segment $[\Psi(a), \Psi(b)]$ in $\mathbb{R}^n$. Let  $J$ be the set of all $j\in n$ which have the property that there exists a unique $t_j\in [0, 1]$ such that $(1-t_j)\Psi_j(a(j))+t_j\Psi_j(b(j))=0$ and let $K\subseteq J$ be a maximal subset of $J$ such that, for all $l,p\in K$, $t_l\neq t_p$ if $l\neq p$.  Suppose that the set $K$ is nonempty and it has exactly $k$ elements. Let $j_0, \dots, j_{k-1}$ be the injective sequence of all elements of $K$ such that $t_{j_l}\leq t_{j_p}$ if $l, p\in k$ and $l\leq p$. Put $x_p= (1-t_{j_p})\Psi(a)+t_{j_p}\Psi(b)$ for each $p\in K$. We get the broken line $L=[\Psi(a), x_0]\cup\bigcup_{p\in k-1}[x_p, x_{p+1}]\cup [x_{k-1}, \Psi(b)]$. Then $\Psi^{-1}(L)$ is  the geometric segment $[a, b]_g$, while the length $\vert L\vert$ of $L$ is equal to $D_2(a, b)$.  Now, it is easily seen that every geometric segment in $\mathbb{S}^n$ is a union of at most $n+1$ traditional segments. The following example shows how to use this method in practice. 

\begin{example}
Let us consider points $a, b\in\mathbb{S}^3$ where $a=(\oplus 0, \ominus\text{ln}3, [\text{ln}2]^{\bullet})$ and $b=(\ominus 0,  0^{\bullet} ,\oplus 0)$. To follow the method described above, we put $u(0)=\oplus, v(0)=\ominus, u(1)=\ominus, v(1)=\bullet,  u(2)=\bullet, v(2)=\oplus$ and $\Psi=\prod_{j\in 3}\Psi_{(u(j), v(j))}$. For $t\in [0, 1]$, we have $ (1-t)\Psi(a)+t\Psi(b)=(1-2t, 3-4t, 2-3t)$. Then $K=\{0, 1, 2\}$, while $t_0=\frac{1}{2}, t_1=\frac{3}{4},  t_2=\frac{2}{3}$. In consequence, $x_0=(0, 1, \frac{1}{2}), x_1=(-\frac{1}{3}, \frac{1}{3}, 0), x_2=(-\frac{1}{2},0, -\frac{1}{4})$, For the broken line $L=[\Psi(a),x_0]\cup [x_0, x_1]\cup [x_1, x_2]\cup[x_2, \Psi(b)]$, we have $\vert L\vert=D_2( a, b)=\sqrt{29}$. In this case,  $[a, b]_g$ is a union of four traditional segments in $\mathbb{S}^3$.
\end{example}

\begin{remark} Let $u,v\in\{\oplus, \ominus, \bullet\}$ be distinct. Suppose that $a\in\mathbb{S}^u\setminus\{\varepsilon^\bullet\}$, while $b\in\mathbb{S}^v\setminus\{\varepsilon^\bullet\}$. Then $[a, b]_{d_1}=\{a, b\}$. 
\end{remark}

\section{Completeness in $\mathbb{S}^n$}
 
According to \cite{Gut} and \cite{Ker}, to consider completeness in metric spaces in the absence of the axiom of choice, it is necessary to make a distinction between the notions of metric completeness and Cantor completeness. 

\subsection{Completeness and Cantor completeness of metrics}

To make our terminology clear, we recall the following definitions with an additional notation which is more suitable to our aims:
\begin{definition}  Let $(X, d)$ be a metric space and let $Y\subseteq X$. It is said that:
\begin{enumerate}
\item[(i)]  the set $Y$  is \textbf{complete} in $(X, d)$ or, equivalently, \textbf{$d$-complete} in $X$ if each $d$-Cauchy sequence of points of $Y$ converges in $(X, d)$ to a point from $Y$;
\item[(ii)] the set $Y$ is \textbf{Cantor complete} in $(X,d)$ or, equivalently, $Y$ is \textbf{Cantor $d$-complete} in $X$ if, for each sequence $(F_n)$ of nonempty $d$-closed in $Y$ sets such that $F_{n+1}\subseteq F_n$ for each $n\in\omega \setminus \{ 0\}$ and $\lim_{n\to +\infty} \text{diam}_d(F_n)=0$, we have $\bigcap_{n\in\omega \setminus \{ 0\}} F_n\neq\emptyset$ where $\text{diam}_d(F_n)$ denotes the diameter of $F_n$ in the metric space $(X,d)$. 
\end{enumerate}
\end{definition}

All Cantor complete subspaces of the metric space $(X, d)$ are closed in $(X, d)$ but, in some models of $\mathbf{ZF}$,  complete subsets of metric spaces need not be closed (see \cite{Gut},  Proposition 6 of \cite{Ker} and Theorem 4.55 of \cite{Her}).  

Let us recall that $\mathbf{CC}$ is the axiom of countable choice which states that every nonempty countable collection of nonempty sets has a choice function (see Definition 2.5 of \cite{Her} and Form 8  in \cite{HoR}). In the light of Theorem 7 of \cite{Ker}, that every complete metric space is Cantor complete is an equivalent of $\mathbf{CC}$. 

\subsection{Completeness in $\mathbb{S}$}

\begin{theorem}
Let $i\in\{1,2\}$. Then $(\mathbb{S}, d_i)$ is a  Cantor complete metric space. Therefore, all  closed subspaces of $\mathbb{S}$ are $d_i$-complete. That all  $d_i$-complete subspaces of $\mathbb{S}$ are closed in $\mathbb{S}$ is an equivalent of $\mathbf{CC}(c\mathbb{R})$. 
\end{theorem}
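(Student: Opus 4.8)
The plan is to prove the three assertions of the theorem in order, exploiting the explicit isometric descriptions of $(\mathbb{S}, d_i)$ given in Remark~32. First I would establish Cantor completeness of $(\mathbb{S}, d_i)$. Since the metrics $d_1$ and $d_2$ are equivalent and Cantor completeness is a metric (not merely topological) notion, I would work directly with the concrete models: $(\mathbb{S}, d_1)$ is isometric to the subspace $\Phi(\mathbb{S})$ of the Euclidean plane $(\mathbb{C}, d_e)$, and $(\mathbb{S}, d_2)$ is isometric to a closed subset of $(\mathbb{R}^2, \rho_2)$ as described in Remark~32. The key observation is that $\Phi(\mathbb{S})$ is a \emph{closed} subset of the plane (it is the union of three rays emanating from the origin, which is plainly closed), and the plane $\mathbb{R}^2$ is Cantor complete in $\mathbf{ZF}$. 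The crucial point to verify is that Cantor completeness in $\mathbf{ZF}$ requires \emph{no choice}: given a decreasing sequence $(F_n)$ of nonempty closed sets with $\operatorname{diam}_d(F_n)\to 0$, one does not need to choose points, because the intersection point, if it exists, is unique and can be \emph{canonically} recovered. Concretely, for each $n$ one forms $\overline{F_n}$ in the ambient complete space and uses the finite-intersection/nested-closed-sets property of $\mathbb{R}^2$; the vanishing diameter forces the intersection to be a single point, and a single point requires no choice function to select.

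The second assertion, that all closed subspaces of $\mathbb{S}$ are $d_i$-complete, follows immediately from the first together with the general fact stated implicitly in the excerpt (after Definition~52): every Cantor complete metric space has the property that each $d$-Cauchy sequence in a closed subset, being Cauchy, has vanishing-diameter tails whose closures form a nested family, hence converges. More directly, I would argue that a closed subspace $Y$ of a Cantor complete space $(X,d)$ is itself Cantor complete (a nested sequence of sets closed in $Y$ is, since $Y$ is closed in $X$, also closed in $X$), and then invoke that a $d$-Cauchy sequence in $Y$ produces a nested vanishing-diameter family $F_n = \operatorname{cl}_d\{x_m : m \ge n\}$ whose unique intersection point is the limit — all in $\mathbf{ZF}$, since the convergent subsequence machinery is replaced by the canonical intersection point.

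The third and main assertion is the equivalence with $\mathbf{CC}(c\mathbb{R})$, and this is where the real work lies. The plan is to prove both implications. For the forward direction (assuming all $d_i$-complete subspaces of $\mathbb{S}$ are closed, derive $\mathbf{CC}(c\mathbb{R})$), I would transport the hypothesis through the isometry $\Phi$ so that the line $\mathbb{S}^{\bullet}$, which $\Phi$ carries isometrically onto the ray $[1,+\infty)$ on the real axis (via $r^{\bullet}\mapsto e^r$), becomes a copy of $\mathbb{R}$; a complete-but-not-closed subset of $\mathbb{R}$ would then induce, via $\Phi^{-1}$, a $d_i$-complete-but-not-closed subset of $\mathbb{S}$, contradicting the hypothesis. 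The hard part will be running this correspondence in reverse with enough care to invoke the characterization from Theorem~4.55 of \cite{Her}: I must show that the failure of $\mathbf{CC}(c\mathbb{R})$ — equivalently, by that theorem, the failure of $\mathbb{R}$ to be sequential — manifests as a sequentially-closed-but-not-closed, hence complete-but-not-closed, subset sitting on one of the three lines of $\mathbb{S}$. For the converse (assuming $\mathbf{CC}(c\mathbb{R})$, show every $d_i$-complete subspace is closed), I would use that under $\mathbf{CC}(c\mathbb{R})$ the real line is sequential, so sequentially closed subsets of $\mathbb{R}^2$ are closed; since a $d_i$-complete subset is always sequentially closed (a $d$-Cauchy sequence converging in $\mathbb{S}$ has its limit in the complete subset), completeness upgrades to closedness. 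The delicate step throughout is ensuring that every appeal to sequential closedness versus closedness is routed precisely through $\mathbf{CC}(c\mathbb{R})$ and the sequentiality of $\mathbb{R}$, rather than through the stronger $\mathbf{CC}(\mathbb{R})$, so that the equivalence is tight.
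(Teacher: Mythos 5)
Your first two assertions and the forward direction of the equivalence follow essentially the paper's own route: Cantor completeness of $(\mathbb{S},d_i)$ is inherited by the closed set $\Phi(\mathbb{S})$ from the plane, which is Cantor complete in $\mathbf{ZF}$ because points of the nested compact sets can be selected canonically (e.g.\ lexicographically least points), so no choice is needed; and the failure of $\mathbf{CC}(c\mathbb{R})$ yields, via Theorem 4.55 of Herrlich, a sequentially closed, non-closed --- hence complete, non-closed --- subset of $\mathbb{R}$, which can be arranged to be bounded and positive and then transported isometrically onto one of the three rays of $\mathbb{S}$. (A small slip here: $\Phi$ carries $\mathbb{S}^{\bullet}$ onto $[0,+\infty)$, not $[1,+\infty)$, and onto a copy of a closed half-line rather than of $\mathbb{R}$; this is harmless, since the bad set can be taken inside $\mathbb{R}_+$.)

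The genuine gap is in your converse direction. You argue: under $\mathbf{CC}(c\mathbb{R})$ the real line is sequential, ``so sequentially closed subsets of $\mathbb{R}^2$ are closed.'' That ``so'' is a non sequitur: sequentiality of $\mathbb{R}$ gives closedness of sequentially closed subsets of $\mathbb{R}$ only, and whether $\mathbf{CC}(c\mathbb{R})$ forces sequentially closed (equivalently, complete) subsets of $\mathbb{R}^2$ to be closed is exactly what the paper leaves open (Problems 53 and 59; Theorem 57 needs the additional ``well placed'' hypothesis precisely so that canonical rational choices become available, and Corollary 58 obtains only one direction for $\mathbb{S}^n$ with $n>1$). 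If your step were provable in $\mathbf{ZF}$, you would have solved those open problems. The repair is to exploit the one-dimensional structure of $\mathbb{S}$ instead of embedding into $\mathbb{R}^2$: $\mathbb{S}$ is the union of the three rays $\mathbb{S}^{\oplus}$, $\mathbb{S}^{\ominus}$, $\mathbb{S}^{\bullet}$, each closed in $\mathbb{S}$ and each, with $d_1=d_2$ restricted to it, isometric to $[0,+\infty)\subseteq\mathbb{R}$ via $a\mapsto \exp|a|$. Given a $d_i$-complete $Y\subseteq\mathbb{S}$, each intersection $Y\cap\mathbb{S}^{u}$, $u\in\{\oplus,\ominus,\bullet\}$, is complete (the intersection of a complete set with a set closed in the ambient space is complete), hence corresponds to a complete subset of $\mathbb{R}$, hence is closed in $\mathbb{R}$ by Theorem 4.55 of Herrlich under $\mathbf{CC}(c\mathbb{R})$; therefore $Y$ is a union of three sets closed in $\mathbb{S}$, and so is closed. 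This ray-by-ray reduction to the real line, not sequentiality of the plane, is what makes the converse go through.
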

\begin{proof} The subspace $\Phi(\mathbb{S})$ of the metric space $(\mathbb{C}, d_e)$ is Cantor complete. This, together with Remark 33, implies that the metric spaces $(\mathbb{S}, d_1)$ and $(\mathbb{S}, d_2)$ are Cantor complete.  Now, suppose that $\mathbf{CC}(c\mathbb{R})$ fails. Then, by Theorem 4.55 of \cite{Her},  $\mathbb{R}$ is not sequential. Let $X$ be a subset of ${\mathbb{R}}_+$ such that $X$ is sequentially closed but not closed in $\mathbb{R}$. Let us consider $X$ as a metric subspace of the complete metric space $(\mathbb{R}, d)$ where $d(x, y)=\vert x-y\vert$ for $x, y\in\mathbb{R}$. Then the metric $d$ is complete on $X$. Since the metric space $X$ is isometric with a metric subspace of the metric subspace $\Phi(\mathbb{S}^\oplus)$ of the semidendrite $\Phi(\mathbb{S})$, it follows that, for $i\in\{1, 2\}$,  not all complete subspaces of $(\mathbb{S}, d_i)$ are closed in $\mathbb{S}$. 

Finally, let us assume that $\mathbf{CC}(c\mathbb{R})$  holds. Then, by  Theorem 4.55 of \cite{Her}, all complete subspaces of $\mathbb{R}$ are closed in $\mathbb{R}$. This implies that all complete subspaces of $(\mathbb{S}, d_i)$ are closed in $\mathbb{S}$ where $i\in\{1, 2\}$.
 \end{proof}
 
 \begin{remark}
 For $i\in\{1, 2\}$,  it is independent of $\mathbf{ZF}$ that all $d_i$-complete subspaces of 
 $\mathbb{S}$ are closed in $\mathbb{S}$. 
 In the light of Remark 7, for instance, in model $\mathcal{M}1$ of \cite{HoR}, there exist metric subspaces of $(\mathbb{S}, d_i)$ that are simultaneously complete and not closed. 
\end{remark}

 \begin{remark} There is a mistake in condition 7 of Theorem 4.54 of \cite{Her}. Namely, condition 7 of Theorem 4.54 on page 74 of \cite{Her} is the following statement:\\
\centerline{ (*)\emph{ Each second countable topological space is separable.}}\\
It is worth to notice that (*) is an equivalent of $\mathbf{CC}$.
To show that (*) implies $\mathbf{CC}$, one can use the method of the proof to Theorem 4.64 in \cite{Her}. It is well known by topologists and set theorists that $\mathbf{CC}$ implies (*). 
Condition 7 of Theorem 4.54 of \cite{Her} should be replaced by the following statement: \\
\centerline{\emph{Every second countable $T_0$-space is separable.}}  \\
We know from a private communication of S. da Silva with E. Wajch that Horst Herrlich was aware of his mistake after \cite{Her} had appeared in print.
 \end{remark}
 
\subsection{Completeness of the main metrics of $\mathbb{S}^n$}

One can deduce from Theorem 49 and from the definitions of $\rho_{k, i}$ (see subsection 5.1) that the following theorem holds: 

\begin{theorem}
Let $i\in\{1, 2\}$ and $k\in 3$. Then, for each $n\in\omega \setminus \{ 0\}$, the metric space $(\mathbb{S}^n, \rho_{k,i})$ is Cantor complete, so all closed subspaces of $\mathbb{S}^n$ are $\rho_{k,i}$-complete.  
\end{theorem}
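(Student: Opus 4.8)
The plan is to reduce Cantor completeness of $(\mathbb{S}^n,\rho_{k,i})$ to that of the single factor $(\mathbb{S},d_i)$, which is supplied by Theorem 49, while exploiting that only finitely many coordinates occur so that no form of choice is needed. First I would record two elementary inequalities, each immediate from the displayed formulas for $\rho_{0,j},\rho_{1,j},\rho_{2,j}$ in subsection 5.1: for every coordinate $j\in n$ and all $x,y\in\mathbb{S}^n$ one has $d_i(x(j),y(j))\le\rho_{k,i}(x,y)$, and conversely $\rho_{k,i}(x,y)\le c\cdot\max_{j\in n}d_i(x(j),y(j))$, where $c=1,\sqrt{n},n$ for $k=0,1,2$ respectively.

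Then, given a descending sequence $(F_m)_{m\in\omega\setminus\{0\}}$ of nonempty $\rho_{k,i}$-closed subsets of $\mathbb{S}^n$ with $\lim_{m\to+\infty}\text{diam}_{\rho_{k,i}}(F_m)=0$, I would, for each $j\in n$, form the sets $G^j_m=\text{cl}_{\mathbb{S}}(\{x(j):x\in F_m\})$. These are nonempty, $d_i$-closed, and descending, and by the first inequality their $d_i$-diameters are bounded by $\text{diam}_{\rho_{k,i}}(F_m)$, hence tend to $0$. Theorem 49 applied to $(\mathbb{S},d_i)$ gives $\bigcap_m G^j_m\neq\emptyset$, and since the diameters shrink to $0$ this intersection has diameter $0$, so it is a single point, which I would call $z_j$. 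The point crucial for a $\mathbf{ZF}$ proof is that each $z_j$ is \emph{uniquely determined}, so the tuple $z=(z_0,\dots,z_{n-1})\in\mathbb{S}^n$ is defined with no appeal to choice; and in any case $n$ is finite, so finitely many selections would be harmless.

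It remains to verify $z\in\bigcap_m F_m$, and this is where I expect the only genuine care is required. Fixing $m$ and $\varepsilon>0$, I would choose $m'\ge m$ with $c\cdot\text{diam}_{\rho_{k,i}}(F_{m'})<\varepsilon$, select a single point $q\in F_{m'}$ (a legitimate one-set instantiation in $\mathbf{ZF}$), and observe that for each $j$ both $q(j)$ and $z_j$ lie in $G^j_{m'}$, whence $d_i(q(j),z_j)\le\text{diam}_{\rho_{k,i}}(F_{m'})$; the reverse inequality then yields $\rho_{k,i}(q,z)<\varepsilon$ with $q\in F_{m'}\subseteq F_m$. Thus $z\in\text{cl}_{\mathbb{S}^n}(F_m)=F_m$ for every $m$, so $\bigcap_m F_m\neq\emptyset$ and $(\mathbb{S}^n,\rho_{k,i})$ is Cantor complete. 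Because this is a $\forall\varepsilon\,\exists q$ statement requiring selection from only one nonempty set at a time, it is valid in $\mathbf{ZF}$.

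Finally, the closing clause follows from two standard $\mathbf{ZF}$ facts: a closed subspace of a Cantor complete space is Cantor complete (a descending family of sets closed in the subspace is closed in the whole space), and Cantor completeness implies completeness (a $d$-Cauchy sequence generates the descending closures $\text{cl}(\{x_l:l\ge m\})$, whose unique intersection point is its limit). As an alternative route, one could instead establish Cantor completeness of $(\mathbb{S}^n,D_1)$ directly via the isometric embedding $[^n\Phi]$ of Remark 43 into $\mathbb{C}^n$ identified with $\mathbb{R}^{2n}$, and then transfer it to every $\rho_{k,i}$ using that all these product metrics are uniformly equivalent, Cantor completeness being preserved under uniform equivalence in $\mathbf{ZF}$; but this still rests on the same finite-product principle and appears no shorter than the coordinatewise argument above.
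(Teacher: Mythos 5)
Your proof is correct and follows exactly the route the paper indicates: the paper gives no written argument for this theorem, saying only that it ``can be deduced from Theorem 49 and from the definitions of $\rho_{k,i}$,'' and your coordinatewise reduction (projecting the descending closed sets to each factor, using Cantor completeness of $(\mathbb{S},d_i)$ to obtain the unique coordinate limits $z_j$, and reassembling them with no appeal to choice) is precisely that deduction carried out in detail. The verification that $z\in\bigcap_m F_m$ and the two closing $\mathbf{ZF}$ facts are all sound, so nothing further is needed.
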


\begin{problem} Let $i\in\{1, 2\}, k\in 3$. Consider any $n\in\omega \setminus \{ 0\}$ with $n>1$.  Does $\mathbf{CC}(c\mathbb{R})$ imply in $\mathbf{ZF}$ that all $\rho_{k,i}$-complete subspaces of $\mathbb{S}^n$ are closed in $\mathbb{S}^n$?
\end{problem}

A satisfactory solution to Problem 53 is unknown to us and it may be complicated. However, the following proposition follows easily from known facts:

\begin{proposition} Let $i\in\{ 1, 2\}$, $k\in 3$ and let $n\in\omega \setminus \{ 0\}$. If $\mathbf{CC}(\mathbb{R})$ holds, then
each $\rho_{k,i}$- complete subspace of $\mathbb{S}^n$ is closed in $\mathbb{S}^n$.
\end{proposition}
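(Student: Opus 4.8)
The plan is to deduce the statement from the already-established Cantor completeness of $(\mathbb{S}^n, \rho_{k,i})$ (Theorem 52) together with the hypothesis $\mathbf{CC}(\mathbb{R})$, via the standard fact that under countable choice for the reals the notions of completeness and Cantor completeness coincide for metric subspaces of separable spaces. The key observation is that $\mathbb{S}^n$ is separable: it embeds isometrically (under $[^n\Phi]$, for $i=1$; and up to equivalence of metrics for general $k,i$) into $\mathbb{C}^n = \mathbb{R}^{2n}$, and $\mathbb{R}^{2n}$ is second countable in $\mathbf{ZF}$, hence so is every subspace, and under $\mathbf{CC}(\mathbb{R})$ second countability yields separability. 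Thus every subspace of $\mathbb{S}^n$ is separable.

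First I would fix $i\in\{1,2\}$, $k\in 3$, $n\in\omega\setminus\{0\}$, and let $Y$ be a $\rho_{k,i}$-complete subspace of $\mathbb{S}^n$. The goal is to show $Y$ is closed, i.e.\ that $Y$ is sequentially closed and that sequential closedness implies closedness here. Let $x\in\mathrm{cl}(Y)$. I would first show, using $\mathbf{CC}(\mathbb{R})$, that there is a sequence $(y_m)$ of points of $Y$ converging to $x$: since $\mathbb{S}^n$ is second countable, $x$ has a countable neighbourhood base $(U_m)$, and by countable choice applied to the nonempty sets $Y\cap U_m$ one extracts a sequence in $Y$ converging to $x$. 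This is precisely where $\mathbf{CC}(\mathbb{R})$ (which implies $\mathbf{CC}(c\mathbb{R})$ and hence that $\mathbb{R}$, and therefore $\mathbb{S}^n$, is sequential) is used.

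Next, the chosen sequence $(y_m)$ converges in $\mathbb{S}^n$, so it is $\rho_{k,i}$-Cauchy; by $\rho_{k,i}$-completeness of $Y$ it converges to a point of $Y$; since limits in a metric (Hausdorff) space are unique, that limit is $x$, whence $x\in Y$. Therefore $\mathrm{cl}(Y)=Y$ and $Y$ is closed. I would phrase the separability/sequentiality step by invoking that $\mathbf{CC}(\mathbb{R})$ implies $\mathbb{R}$ is Fr\'echet (Theorem 4.54 of \cite{Her}, Form 94 of \cite{HoR}), and that a finite product and subspaces of a Fr\'echet second-countable space remain sequential, so that closure equals sequential closure in $\mathbb{S}^n$.

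The only genuine obstacle is the passage from a limit point to a convergent \emph{sequence} inside $Y$, which is not available in plain $\mathbf{ZF}$; everything else (Cauchy-ness of a convergent sequence, uniqueness of limits, the isometric/quasi-isometric embedding into $\mathbb{R}^{2n}$) is routine and choice-free. Since the statement explicitly assumes $\mathbf{CC}(\mathbb{R})$, which is strictly stronger than the $\mathbf{CC}(c\mathbb{R})$ that makes $\mathbb{R}$ sequential, this step is secured, and the proof is short. I would note in passing that this is why the proposition asks for the stronger $\mathbf{CC}(\mathbb{R})$ rather than $\mathbf{CC}(c\mathbb{R})$: the latter suffices to make $\mathbb{S}^n$ sequential but the clean extraction of a convergent sequence from a neighbourhood base is most transparently handled with $\mathbf{CC}(\mathbb{R})$, leaving the sharper $\mathbf{CC}(c\mathbb{R})$-only question as the open Problem~53.
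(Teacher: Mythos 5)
Your core argument is correct and is essentially the paper's own: under $\mathbf{CC}(\mathbb{R})$ one produces, for each $x$ in the closure of the complete subspace $Y$, a sequence of points of $Y$ converging to $x$, and then Cauchyness, completeness of $Y$ and uniqueness of limits in a metric space force $x\in Y$. The only routing difference is where the choice is invoked: the paper argues contrapositively, first citing Theorem 4.54 of Herrlich to conclude that the subspace itself is \emph{separable}, and then builds the convergent sequence from a countable dense subset with no further appeal to choice, whereas you apply countable choice directly to the traces $Y\cap U_m$ of a countable neighbourhood base. Your route works, but it silently needs one transfer step: $\mathbf{CC}(\mathbb{R})$ speaks of subsets of $\mathbb{R}$, so to choose from the sets $Y\cap U_m\subseteq\mathbb{S}^n$ you must push them through the embedding $[^n\Phi]$ into $\mathbb{R}^{2n}$ and use the bijection, provable in $\mathbf{ZF}$, between $\mathbb{R}^{2n}$ and $\mathbb{R}$. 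The same remark is needed to repair your blanket assertion that ``under $\mathbf{CC}(\mathbb{R})$ second countability yields separability'': for arbitrary second countable spaces that principle is an equivalent of full $\mathbf{CC}$ (see Remark 51 of the paper); $\mathbf{CC}(\mathbb{R})$ suffices only for spaces that embed into $\mathbb{R}^{2n}$, which is all you need here.

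Two of your side remarks, however, are wrong and should be deleted. You claim (in the parenthetical of your second paragraph and again in your closing paragraph) that $\mathbf{CC}(c\mathbb{R})$ already makes $\mathbb{S}^n$ sequential. For $n>1$ this is precisely what is unknown: in $\mathbf{ZF}$ every complete subspace is sequentially closed (a sequence of its points converging in $\mathbb{S}^n$ is Cauchy, hence converges in the subspace, and limits are unique), so if $\mathbf{CC}(c\mathbb{R})$ implied that $\mathbb{S}^n$ is sequential, then every complete subspace would be closed and the paper's open Problem 53 would be solved affirmatively --- your own closing sentence, which simultaneously asserts this implication and defers to Problem 53, is internally inconsistent. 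The conclusion that $\mathbb{S}^n$ is sequential under $\mathbf{CC}(\mathbb{R})$ is true, but the correct chain is not $\mathbf{CC}(\mathbb{R})\Rightarrow\mathbf{CC}(c\mathbb{R})\Rightarrow(\mathbb{R}\text{ sequential})\Rightarrow(\mathbb{S}^n\text{ sequential})$; it is through separability of all subspaces of $\mathbb{S}^n$, which is exactly how the paper's proof proceeds. Since these remarks are commentary rather than load-bearing steps, your proof stands once they are removed and the transfer step above is made explicit.
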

\begin{proof} Suppose that $\mathbf{CC}(\mathbb{R}) $ holds. Let $X\subseteq\mathbb{S}^n$. In view of Theorem 4.54 of \cite{Her}, the subspace $X$ of $\mathbb{S}^n$ is separable. If $X$ is not closed in $\mathbb{S}^n$, it follows from the separability of $X$ and from the first-countability of $\mathbb{S}^n$ that there exists a sequence of points of $X$ which converges in $\mathbb{S}^n$ to a point from $\mathbb{S}^n\setminus X$. Then $X$ is not $\rho_{k,i}$-complete.
\end{proof}

\begin{definition} For a positive integer $n$, let us say that a set $A\subseteq\mathbb{R}^n$ is \textbf{well placed} if, for each open subset $V$ of $\mathbb{R}^n$ such that $V\cap A\neq\emptyset$, there exist $p,q\in V\cap\mathbb{Q}^n$ such that $[p, q]\cap A\neq\emptyset$ where $[p, q]$ is the standard segment in $\mathbb{R}^n$ with end-points $p$ and $q$.
\end{definition}

The following theorem leads to a partial solution to Problem 53:

\begin{theorem} For each positive integer $n>1$, the following conditions are all equivalent in $\mathbf{ZF}$:
\begin{enumerate}
\item[(i)] $\mathbf{CC}(c\mathbb{R})$;
\item[(ii)] each well placed complete subset of $\mathbb{R}^n$ is closed in $\mathbb{R}^n$;
\item[(iii)] each well placed, simultaneously connected and complete subset of $\mathbb{R}^n$ is closed in $\mathbb{R}^n$;
\item[(iv)] each well placed,  simultaneously connected and complete subspace of the plane $\mathbb{R}^2$ is closed in $\mathbb{R}^2$.
\end{enumerate}
\end{theorem}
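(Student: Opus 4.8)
The plan is to establish the cycle of implications (i) $\Rightarrow$ (ii) $\Rightarrow$ (iii) $\Rightarrow$ (iv) $\Rightarrow$ (i). Two of these are essentially formal. First, (ii) $\Rightarrow$ (iii) holds because (iii) only asserts closedness for those well placed complete sets that are in addition connected, so it is a special case of (ii). Second, (iii) $\Rightarrow$ (iv) is immediate when $n=2$, and for $n>2$ it follows via the embedding $A\mapsto A\times\{0\}^{n-2}$: this map is an isometry of $A$ onto a subset of the closed linear subspace $\mathbb{R}^2\times\{0\}^{n-2}$, so it preserves connectedness and completeness, and I will check that it preserves well-placedness by taking the rational witnesses $p,q$ inside $\mathbb{R}^2\times\{0\}^{n-2}$; since that subspace is closed in $\mathbb{R}^n$, closedness of the image in $\mathbb{R}^n$ is equivalent to closedness of $A$ in $\mathbb{R}^2$. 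Thus the real content lies in (i) $\Rightarrow$ (ii) and (iv) $\Rightarrow$ (i).

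For (i) $\Rightarrow$ (ii) I assume $\mathbf{CC}(c\mathbb{R})$ and let $A\subseteq\mathbb{R}^n$ be well placed and complete. Arguing by contradiction, suppose $x^*\in\mathrm{cl}(A)\setminus A$. For each $k\in\omega\setminus\{0\}$ the ball $V_k=B_{d_e}(x^*,\frac1k)$ meets $A$, so well-placedness supplies rational $p,q\in V_k\cap\mathbb{Q}^n$ with $[p,q]\cap A\neq\emptyset$; because $\mathbb{Q}^n\times\mathbb{Q}^n$ carries a canonical well-ordering in $\mathbf{ZF}$, I may select the least such pair $(p_k,q_k)$ with no appeal to choice. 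Put $A_k=[p_k,q_k]\cap A$. As the intersection of the complete set $A$ with a closed segment, each $A_k$ is a nonempty complete subspace, and (discarding the trivial case $p_k=q_k$) the canonical affine parametrization of the segment identifies it with a nonempty complete subspace $T_k$ of $\mathbb{R}$. A single application of $\mathbf{CC}(c\mathbb{R})$ to the sequence $(T_k)$ then yields points $a_k\in A_k\subseteq A$; since $p_k,q_k\in V_k$ and balls are convex, $a_k\in\overline{B}_{d_e}(x^*,\frac1k)$, so $a_k\to x^*$. Hence $(a_k)$ is a $d_e$-Cauchy sequence in $A$, and completeness forces $x^*\in A$, a contradiction. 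The point is that well-placedness is exactly what reduces the uncountably many approaches to $x^*$ to countably many complete subsets of $\mathbb{R}$, so that one instance of $\mathbf{CC}(c\mathbb{R})$ suffices.

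For (iv) $\Rightarrow$ (i) I argue contrapositively. If $\mathbf{CC}(c\mathbb{R})$ fails, then by Theorem 4.55 of \cite{Her} the line $\mathbb{R}$ is not sequential, so there is a sequentially closed, non-closed $S\subseteq\mathbb{R}$. Intersecting with a bounded interval about a point of $\mathrm{cl}(S)\setminus S$ and applying an increasing affine map, I may assume that $X\subseteq(\frac13,\frac23)$ is sequentially closed in $\mathbb{R}$, not closed, with some $c\in\mathrm{cl}(X)\setminus X$. I then set
$$Y=\bigl([0,1]\times\{1\}\bigr)\cup\bigcup_{x\in X}\bigl(\{x\}\times[0,1]\bigr)\subseteq\mathbb{R}^2,$$
a comb whose full teeth stand over the points of $X$ and which is connected through its top bar. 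I will verify that $(c,0)\in\mathrm{cl}(Y)\setminus Y$, so $Y$ is not closed; that every convergent sequence in $Y$ has its limit in $Y$ (if the limiting height is below $1$, the first coordinates eventually lie in $X$ and sequential closedness of $X$ applies, while if it equals $1$ the limit lies on the top bar), so $Y$ is sequentially closed and therefore complete; and that $Y$ is well placed, since any open set meeting a tooth at $(x_0,b_0)$ contains a short horizontal segment between two rational points straddling $x_0$ at a rational height in $(0,1)$, and such a segment meets the full tooth $\{x_0\}\times[0,1]\subseteq Y$ (points on the top bar are handled even more directly). Thus $Y$ is a well placed, connected, complete, non-closed subset of $\mathbb{R}^2$, contradicting (iv).

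I expect the main obstacle to be the verification in (iv) $\Rightarrow$ (i) that the comb $Y$ is simultaneously complete and well placed. Completeness depends on the precise way sequential closedness of $X$ is inherited by $Y$, which is why the teeth must be complete segments and no partial tooth may be attached over $c$ (otherwise $(c,0)$ would become a genuine sequential limit of points of $Y$ and destroy completeness). Well-placedness, in turn, must be checked for every open set meeting $Y$, including those meeting a single tooth standing over an irrational point of $X$, where the rational witnesses cannot be placed on the tooth itself. By contrast, (i) $\Rightarrow$ (ii) is conceptually the heart of the theorem but technically routine once each $A_k$ is recognized as a complete subspace of $\mathbb{R}$, and the two remaining implications are bookkeeping.
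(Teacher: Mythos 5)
Your proof is correct, and its overall architecture coincides with the paper's: the same cycle of implications, the same mechanism of rational segments plus $\mathbf{CC}(c\mathbb{R})$ for (i)$\Rightarrow$(ii), and the same comb-space counterexample for (iv)$\Rightarrow$(i). The differences are local but worth recording. In (i)$\Rightarrow$(ii) the paper applies the choice function to the family of \emph{all} rational pairs $(p,q)$ with $[p,q]\cap X\neq\emptyset$, concludes that $X$ is separable, and then invokes the ZF fact that separable complete subsets of metric spaces are closed; you instead argue by contradiction at a single closure point $x^{*}$, choosing one point per ball $B_{d_e}(x^{*},\frac{1}{k})$. Both versions hinge on the same trick (the affine parametrization turning $[p,q]\cap A$ into a complete subspace of $\mathbb{R}$), but the paper's yields the stronger intermediate conclusion that every well placed complete set is separable, while yours is more economical and localized. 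In (iv)$\Rightarrow$(i), your comb $([0,1]\times\{1\})\cup\bigcup_{x\in X}(\{x\}\times[0,1])$ is the paper's set $(A\times[0,1])\cup(\mathbb{R}\times\{1\})$ with a bounded bar; the paper proves completeness by a direct case analysis on Cauchy sequences (according to whether finitely or infinitely many teeth are hit), whereas you verify sequential closedness and then use the ZF completeness of $\mathbb{R}^{2}$ --- equally valid and arguably cleaner. You also spell out the embedding argument for (iii)$\Rightarrow$(iv) when $n>2$, which the paper dismisses as clear, and you verify well-placedness of the comb, which the paper asserts with ``of course.'' One small detail to fix in your normalization step: intersect $S$ with a \emph{closed} bounded interval around the bad closure point (an open interval need not preserve sequential closedness, since a sequence in the intersection may converge to an endpoint); after that, the increasing affine map into $(\frac{1}{3},\frac{2}{3})$ works as you describe.
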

\begin{proof} It is clear that the implications $(ii)\Rightarrow  (iii)\Rightarrow (iv)$ hold. Let us assume $(i)$ and use some ideas of the proof that (7) implies (5) in Theorem 4.55 in \cite{Her}. Consider a well placed, nonempty complete subset $X$ of $\mathbb{R}^n$. Let $J=\{ (p,q)\in\mathbb{Q}^n\times\mathbb{Q}^n: [p, q]\cap X\neq\emptyset\}$. For each pair $(p, q)\in J$, let $Y(p, q)=\{ t\in [0, 1]: (1-t)p+tq\in X\}$. Then $Y(p, q)$ is a complete subspace of $\mathbb{R}$. Under the assumption that 
$\mathbf{CC}(c\mathbb{R})$ holds, there exists $f\in\prod_{(p,q)\in J} Y(p, q)$. Since $X$ is well placed, the set $\{f(p, q): (p, q)\in J\}$ is dense in $X$. Hence $X$ is separable. Since every separable complete subset of a metric space is closed, the set $X$ is closed in $\mathbb{R}^n$. In consequence, $(i)\Rightarrow (ii)$. 

Now, suppose that $\mathbf{CC}(c\mathbb{R})$ fails. In view of Theorem 4.55 of \cite{Her}, there exists a sequentially closed subset $A$ of $\mathbb{R}$ such that $A$ is not closed in $\mathbb{R}$. Let $I=[0, 1]$ be the unit interval of $\mathbb{R}$ and let  $X=(A\times I)\cup(\mathbb{R}\times\{1\})$. Of course $X$ is well placed and connected in $\mathbb{R}^2$. Let $(z_n)_{n\in\omega \setminus \{ 0\}}$ be a Cauchy sequence in $X$ and let  $x_n, y_n$ be real numbers such that $z_n$ is the ordered pair $(x_n, y_n)$ for $n\in\omega \setminus \{ 0\}$. Put $C=\{a\in A: (\{a\}\times I)\cap\{z_n: n\in\omega \setminus \{ 0\}\}\neq\emptyset\}$. Suppose that $C$ is infinite.   Then there exists an injective subsequence $(z_{n_k})$ of $(z_n)$ such that $x_{n_k}\in A$. The sequence $(x_{n_k})$ is a Cauchy sequence of points of $A$. Let $x_0$ be the limit of $(x_{n_k})$ in $\mathbb{R}$. Since $A$ is sequentially closed, we have $x_0\in A$. Suppose that the segment $\{x_0\}\times I$ does not contain accumulation points of the sequence $(z_{n_k})$. Then, by the compactness of $\{ x_0\}\times I$,  there exists an open set $V$ in $\mathbb{R}^2$ such that $(\{x_0\}\times I)\subseteq V$ and $V\cap\{ z_{n_k}: k\in\omega \setminus \{ 0\} \}=\emptyset$. This is impossible because $x_0$ is the limit of $(x_{n_k})$. The contradiction obtained shows that the sequence $(z_{n_k})$ has an accumulation point $z_0\in\{x_0\}\times I$. Then $z_0$ is the limit of $(z_n)$ in $X$.  Now, suppose that $C$ is finite. Then the subspace $Y=(\mathbb{R}\times\{1\})\cup\bigcup_{a\in C}(\{a\}\times I)$ is closed in $\mathbb{R}^2$ and, moreover,  there exists $n_0\in\omega \setminus \{ 0\}$ such that $z_n\in Y$ for each natural number $n>n_0$. Since $Y$ is complete and closed in $\mathbb{R}^2$, the limit of $(z_n)$ in $\mathbb{R}^2$ belongs to $Y$. In consequence, $X$ is complete. Of course, $X$ is not closed in $\mathbb{R}^2$.
\end{proof}

\begin{corollary} Let $n>1$ be a positive integer. Let $i\in\{1, 2\}$ and $k\in 3$. If all simultaneously $\rho_{k,i}$- complete and connected subspaces of $\mathbb{S}^n$ are closed in $\mathbb{S}^n$, then $\mathbf{CC}(c\mathbb{R})$ holds.
\end{corollary}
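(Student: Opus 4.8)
The plan is to prove the contrapositive: assuming that $\mathbf{CC}(c\mathbb{R})$ fails, I shall produce a subspace of $\mathbb{S}^n$ that is simultaneously $\rho_{k,i}$-complete and connected but fails to be closed. First I would observe that the truth of the hypothesis does not actually depend on the chosen pair $(k,i)$. By Remark 33 the coordinate metrics $d_1$ and $d_2$ are uniformly equivalent on $\mathbb{S}$, and for a fixed coordinate metric the three product constructions $\rho_{0,\cdot},\rho_{1,\cdot},\rho_{2,\cdot}$ satisfy inequalities of the form $\rho_{0,\cdot}\le\rho_{1,\cdot}\le\rho_{2,\cdot}\le n\,\rho_{0,\cdot}$; hence all six metrics $\rho_{k,i}$ are pairwise uniformly equivalent on $\mathbb{S}^n$. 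A uniform homeomorphism maps Cauchy sequences to Cauchy sequences and convergent sequences to convergent sequences, a fact needing no choice, so it preserves completeness (which, by Definition 46(i), is an intrinsic property of the subspace) as well as closedness and connectedness. It therefore suffices to exhibit the required subspace for one convenient pair, say $i=2$ and $k=2$.

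Since $n>1$, I would build the example inside $\mathbb{S}^2$ and then transport it into $\mathbb{S}^n$ by fixing the last $n-2$ coordinates equal to $\varepsilon^{\bullet}$; the slice $\mathbb{S}^2\times\{(\varepsilon^{\bullet},\dots,\varepsilon^{\bullet})\}$ is a closed isometric copy of $\mathbb{S}^2$ in $\mathbb{S}^n$, so completeness, connectedness and non-closedness all descend to $\mathbb{S}^n$. The key geometric point is that $\mathbb{S}^2$ contains an isometric copy of a taxicab half-strip. Indeed, by Definition 42 the map $\Psi_{(\oplus,\ominus)}$ is a $d_2$-isometry of $L:=\mathbb{S}^{\oplus}\cup\mathbb{S}^{\ominus}$ onto $(\mathbb{R},|\cdot|)$, while $\Psi_{(\bullet,\bullet)}$ is a $d_2$-isometry of the arm $\mathbb{S}^{\bullet}$ onto $[0,+\infty)$. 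Consequently $\Psi:=\Psi_{(\oplus,\ominus)}\times\Psi_{(\bullet,\bullet)}$ is an isometry of $(L\times\mathbb{S}^{\bullet},\rho_{2,2})$ onto $\mathbb{R}\times[0,+\infty)$ equipped with the metric $\rho_2(x,y)=|x(0)-y(0)|+|x(1)-y(1)|$.

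Now I would invoke the set constructed in the proof of Theorem 56, namely $X=(A\times[0,1])\cup(\mathbb{R}\times\{1\})$, where $A\subseteq\mathbb{R}$ is sequentially closed but not closed (such $A$ exists because $\mathbf{CC}(c\mathbb{R})$ fails, by Theorem 4.55 of \cite{Her}); that proof already shows $X$ is connected and complete but not closed in $\mathbb{R}^2$. Since $X\subseteq\mathbb{R}\times[0,1]\subseteq\mathbb{R}\times[0,+\infty)$, I may set $\tilde X:=\Psi^{-1}(X)\subseteq L\times\mathbb{S}^{\bullet}\subseteq\mathbb{S}^2$. Being isometric to $X$, the set $\tilde X$ is $\rho_{2,2}$-complete by the intrinsic invariance of completeness, and, being homeomorphic to $X$, it is connected. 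To see that $\tilde X$ is not closed, I would fix $a^{\ast}\in\overline{A}\setminus A$ together with a sequence $(a_m)$ in $A$ with $a_m\to a^{\ast}$; then the points $\Psi^{-1}(a_m,0)\in\tilde X$ converge in $\mathbb{S}^2$ to $q:=\Psi^{-1}(a^{\ast},0)$, whereas $q\notin\tilde X$ because $(a^{\ast},0)\notin X$. Transporting $\tilde X$ into $\mathbb{S}^n$ as described yields a simultaneously $\rho_{k,i}$-complete and connected, non-closed subspace, which is exactly the negation of the hypothesis and completes the contrapositive.

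I expect the only genuinely delicate points to be the following two. First, verifying that the maps of Definition 42 are honest $d_2$-isometries of $L$ onto $\mathbb{R}$ and of $\mathbb{S}^{\bullet}$ onto $[0,+\infty)$; this amounts to unwinding the case definition of $d_2$ together with the identities $\exp|ur|=e^{r}$ and $\Psi(\varepsilon^{\bullet})=0$, and checking surjectivity. Second, the bookkeeping that completeness is intrinsic and preserved by isometry and by uniform equivalence, so that the single counterexample fabricated for $\rho_{2,2}$ simultaneously witnesses the failure of the hypothesis for every admissible pair $(k,i)$. Everything else is a routine transfer of topological properties along homeomorphisms.
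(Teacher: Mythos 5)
Your overall architecture is sound and is essentially the derivation the paper intends (the paper states Corollary 57 without proof, as an implicit consequence of Theorem 56): reduce to a single metric pair via uniform equivalence of the six metrics $\rho_{k,i}$, embed a closed isometric copy of the taxicab half-plane $\mathbb{R}\times[0,+\infty)$ into $(\mathbb{S}^2,\rho_{2,2})$ via $\Psi_{(\oplus,\ominus)}\times\Psi_{(\bullet,\bullet)}$, pull back the set $X=(A\times[0,1])\cup(\mathbb{R}\times\{1\})$ constructed in the proof of Theorem 56, and pass to a closed slice of $\mathbb{S}^n$. All of those transfers are legitimate, and your verification that the maps of Definition 42 are $d_2$-isometries is correct.

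However, your argument that $\tilde X$ is not closed is genuinely wrong. You ``fix $a^{\ast}\in\overline{A}\setminus A$ together with a sequence $(a_m)$ in $A$ with $a_m\to a^{\ast}$.'' No such sequence can exist: $A$ was chosen to be sequentially closed but not closed, so every sequence from $A$ that converges in $\mathbb{R}$ has its limit in $A$; a sequence converging to $a^{\ast}\notin A$ would contradict exactly the property that makes $A$ usable. Worse, no sequential witness of non-closedness of $\tilde X$ can exist at all: the proof of Theorem 56 shows $X$ is complete, complete subsets of metric spaces are always sequentially closed in $\mathbf{ZF}$, and hence the isometric copy $\tilde X$ is sequentially closed in $\mathbb{S}^2$. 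This is not a cosmetic slip; the divergence between sequential closure and closure in the absence of $\mathbf{CC}(c\mathbb{R})$ is the entire content of the statement you are proving. The repair is immediate and needs no sequences: $\text{cl}_{\mathbb{R}^2}(X)=(\overline{A}\times[0,1])\cup(\mathbb{R}\times\{1\})$ lies inside the half-plane $\mathbb{R}\times[0,+\infty)$, so $(a^{\ast},0)$ is a closure point of $X$ relative to the half-plane that does not belong to $X$; since $\Psi$ is a homeomorphism, $q=\Psi^{-1}(a^{\ast},0)$ lies in $\text{cl}_{L\times\mathbb{S}^{\bullet}}(\tilde X)\setminus\tilde X$; and since $L\times\mathbb{S}^{\bullet}$ is closed in $\mathbb{S}^2$, this closure agrees with the closure taken in $\mathbb{S}^2$. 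With that replacement (and noting, for the completeness transfer, that the Euclidean and taxicab metrics on $\mathbb{R}^2$ are uniformly equivalent, so the completeness of $X$ established in Theorem 56 for $d_e$ holds for $\rho_2$ as well), your proof goes through.
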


Problem 53 can be regarded as equivalent to the following one:

\begin{problem} Let $n\in\omega \setminus \{ 0\}$ and $n>1$. If $\mathbf{M}$ is a model of $\mathbf{ZF}$ such that $\mathbf{CC}(c\mathbb{R})$ holds in $\mathbf{M}$, is it true in $\mathbf{M}$ that every complete subspace of $\mathbb{R}^n$ is closed in $\mathbb{R}^n$?
\end{problem}

\section{Product semimodules from SMPA}

Let $n\in\omega \setminus \{ 0\}$. For all $x,y\in\mathbb{S}^n$, one can define $x\oplus y\in\mathbb{S}^n$ and $x\otimes y\in\mathbb{S}^n$ in an obvious way:
\begin{eqnarray}
(x\oplus y)(i) &=& x(i)\oplus y(i)
\end{eqnarray}
\begin{eqnarray}
(x\otimes y)(i) &=& x(i)\otimes y(i)
\end{eqnarray}
where, for each $i\in n$, $x(i)\oplus y(i)$ and $x(i)\otimes y(i)$ are defined as in Corollary 28. Then $(\mathbb{S}^n, \oplus, \otimes)$ is a semiring. 

\subsection{$\mathbb{S}$ as a semimodule}

By a scalar we mean an element of $\mathbb{R}_{\varepsilon }$. 
For $a\in \mathbb{S}$, $\lambda \in \mathbb{R}_{\varepsilon }$, we define 
\begin{eqnarray}
\lambda \otimes a =\sgn(a) (\lambda +|a|).
\end{eqnarray} 
The operation 
$\otimes: \mathbb{R}_{\varepsilon }\times \mathbb{S\rightarrow S}$ is well defined, and we can consider
it as an outer operation from $\mathbb{R}_{\varepsilon}\times\mathbb{S}$ to $\mathbb{S}$ or the restriction to $\mathbb{R}_{\varepsilon}\times\mathbb{S}$ of the inner operation $\otimes$
in $\mathbb{S}_{\max}$ because
$\mathbb{R}_{\max }\subset \mathbb{S}_{\max}$. Considering $\otimes$ as the outer operation, we can look at $\mathbb{S}_{\max}$ as at a semimodule (``semi-vector-space'') over the semifield $\mathbb{R}_{\max}$. 

\subsection{$\mathbb{S}^n$ as a semimodule}

Let $n\in\omega \setminus \{ 0\}$.   We consider $\mathbb{S}^n$ as the product semimodule with the inner operation $\oplus$ defined by equation (15) and the outer operation $\otimes:\mathbb{R}_{\varepsilon}\times\mathbb{S}^n\to\mathbb{S}^n$ defined as follows:

\begin{eqnarray}
(\lambda \otimes a)(i)=\lambda \otimes a(i)\text{,}
\end{eqnarray}%
where $\lambda\in\mathbb{R}_{\varepsilon},  a\in\mathbb{S}^n$ and $i\in n$, while $\lambda \otimes
a(i)$ is as in  (17).

We deduce the following two corollaries from Propositions 37 and 38, respectively:

\begin{corollary}
The operation $\otimes:\mathbb{S}^n\times\mathbb{S}^n\to\mathbb{S}^n $ is continuous in the usual topology of~$\mathbb{S}^{n}$.
\end{corollary}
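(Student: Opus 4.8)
The plan is to reduce this coordinatewise to the already-proved continuity of $\otimes$ on $\mathbb{S}$, namely Proposition 37. Recall that, by the definition of the usual topology of $\mathbb{S}^n$ given at the beginning of Section 5, the space $\mathbb{S}^n$ carries the product topology, and that by equation (16) the operation $\otimes:\mathbb{S}^n\times\mathbb{S}^n\to\mathbb{S}^n$ is defined by $(x\otimes y)(i)=x(i)\otimes y(i)$ for each $i\in n$, where the right-hand side is the inner operation of $\mathbb{S}_{\max}$. The first step I would take is to invoke the universal property of the product topology: a mapping into the product space $\mathbb{S}^n=\prod_{i\in n}\mathbb{S}$ is continuous if and only if its composition with each coordinate projection $\pi_i:\mathbb{S}^n\to\mathbb{S}$ is continuous. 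This reduces the claim to showing that, for every $i\in n$, the mapping $(x,y)\mapsto x(i)\otimes y(i)$ from $\mathbb{S}^n\times\mathbb{S}^n$ to $\mathbb{S}$ is continuous.

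To handle each such coordinate, I would factor the map $\pi_i\circ\otimes$ through the intermediate space $\mathbb{S}\times\mathbb{S}$. For fixed $i\in n$, consider the mapping $q_i:\mathbb{S}^n\times\mathbb{S}^n\to\mathbb{S}\times\mathbb{S}$ given by $q_i(x,y)=(x(i),y(i))$. Its two coordinate functions are the projections $(x,y)\mapsto x(i)$ and $(x,y)\mapsto y(i)$ of the product space $\mathbb{S}^n\times\mathbb{S}^n$, each of which is continuous; hence $q_i$ is continuous, again by the universal property of the product topology. Then $\pi_i\circ\otimes$ equals the composite of $q_i$ with the inner multiplication $\otimes:\mathbb{S}\times\mathbb{S}\to\mathbb{S}$, which is continuous by Proposition 37. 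Since a composition of continuous maps is continuous, $\pi_i\circ\otimes$ is continuous for each $i\in n$, and therefore $\otimes:\mathbb{S}^n\times\mathbb{S}^n\to\mathbb{S}^n$ is continuous.

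I expect no serious obstacle here: the entire analytic content—locating where multiplication on $\mathbb{S}$ behaves well—is already encapsulated in Proposition 37, and the passage to $\mathbb{S}^n$ is a purely formal argument about finite product topologies. The only point deserving a moment's care is the identification of $\mathbb{S}^n\times\mathbb{S}^n$ with a product on which the maps $q_i$ are genuine projections, but this is immediate from the definition of the product topology. I would also note explicitly that every tool used—the universal property of the product topology, the continuity of projections, and the continuity of a composite of continuous maps—is valid in \textbf{ZF} without any appeal to a choice principle (the index set $n$ being finite), so that the corollary is a \textbf{ZF} result, consistently with the overall framework of the paper.
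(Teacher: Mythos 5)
Your proof is correct and matches the paper's approach: the paper states this corollary as an immediate deduction from Proposition 37, and your coordinatewise reduction via the universal property of the product topology is exactly the argument being implicitly invoked. You merely spell out the formal details that the paper leaves to the reader.
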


\begin{corollary}
The operation $\oplus:\mathbb{S}^n\times\mathbb{S}^n\to\mathbb{S}^n $ is discontinuous in 
the usual topology of~$\mathbb{S}^{n}$.
\end{corollary}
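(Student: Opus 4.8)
The plan is to deduce the discontinuity of $\oplus$ on $\mathbb{S}^n$ from the already established discontinuity of $\oplus$ on $\mathbb{S}$ (Proposition 38) by a slice-restriction argument. The key idea is that the product operation acts coordinatewise by (15), so a single-coordinate slice of $\mathbb{S}^n$ carries a faithful topological and algebraic copy of the one-dimensional operation, and continuity of $\oplus$ on $\mathbb{S}^n$ would force continuity of $\oplus$ on $\mathbb{S}$.

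First I would fix the slice embedding $\iota:\mathbb{S}\to\mathbb{S}^n$ that sends $a\in\mathbb{S}$ to the tuple whose $0$-th coordinate equals $a$ and whose remaining coordinates all equal the zero element $\zero$; that is, $\iota(a)(0)=a$ and $\iota(a)(i)=\zero$ for $i\in n\setminus\{0\}$. Since the natural topology of $\mathbb{S}^n$ is the product topology, $\iota$ is a homeomorphism of $\mathbb{S}$ onto the subspace $A=\iota(\mathbb{S})$ of $\mathbb{S}^n$, and $\iota\times\iota$ is a homeomorphism of $\mathbb{S}\times\mathbb{S}$ onto the subspace $A\times A$ of $\mathbb{S}^n\times\mathbb{S}^n$. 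Next I would verify that $A$ is closed under $\oplus$ and that the restricted operation reproduces the one-dimensional one: because $\zero=\varepsilon^{\bullet}$ is the zero element of $\mathbb{S}_{\max}$, we have $\zero\oplus\zero=\zero$, so (15) gives $\iota(a)\oplus\iota(b)=\iota(a\oplus b)$ for all $a,b\in\mathbb{S}$, whence $\oplus$ maps $A\times A$ into $A$. Transporting along $\iota$ and $\iota\times\iota$, the restriction of $\oplus:\mathbb{S}^n\times\mathbb{S}^n\to\mathbb{S}^n$ to $A\times A$, corestricted to $A$, is conjugate to $\oplus:\mathbb{S}\times\mathbb{S}\to\mathbb{S}$.

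The conclusion then follows by contradiction: if $\oplus$ on $\mathbb{S}^n$ were continuous, its restriction to the subspace $A\times A$ would be continuous, and its corestriction to the subspace $A$ would remain continuous; transporting along the homeomorphisms would make $\oplus$ on $\mathbb{S}$ continuous, contradicting Proposition 38. The argument is routine once the slice is set up; the only point needing care is to confirm that $A$ is genuinely closed under $\oplus$ (so the corestriction to $A$ is defined) and that restricting continuity to the sub-product $A\times A$ and then corestricting to the subspace $A$ are both legitimate --- standard facts about the product and subspace topologies.

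Alternatively, and perhaps more transparently, one may avoid the contradiction and simply transport the explicit discontinuity witness of Proposition 38. Fixing $r\in\mathbb{R}$ and setting $x_s=\iota(\oplus s)$ and $y=\iota(r^{\bullet})$, the continuity of $r\mapsto\oplus r$ gives $x_s\to\iota(\oplus r)$ as $s\to r$, while the coordinatewise definition (15) yields $x_s\oplus y=\iota(\oplus s\oplus r^{\bullet})$, so that $\limes_{s\to r^-}(x_s\oplus y)=\iota(r^{\bullet})$ and $\limes_{s\to r^+}(x_s\oplus y)=\iota(\oplus r)$. These one-sided limits differ, exhibiting the discontinuity of $\oplus$ on $\mathbb{S}^n$ directly at the point $(\iota(\oplus r),\iota(r^{\bullet}))$.
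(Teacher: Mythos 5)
Your proposal is correct and takes essentially the same route as the paper: the paper offers no separate argument, stating only that this corollary is deduced from Proposition 38, and your slice-embedding $\iota(a)=(a,\zero,\dots,\zero)$ (using $\zero\oplus\zero=\zero$ to get closure under $\oplus$) is precisely the natural way to carry out that deduction. Your alternative version, transporting the explicit witness at $(\oplus r, r^{\bullet})$ along $\iota$, is likewise just the paper's one-dimensional counterexample from Proposition 38 read inside the slice.
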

 
\section{Convexities in $\mathbb{S}^n$}

If, for each pair of points $x,y$ of a space $X$, a segment $[x,y]$ with end-points $x$ and $y$ is defined, then a set $A\subseteq X$ is called convex if, for all $x,y\in A$, we have $[x,y]\subseteq A$ (see, for instance, \cite{H}). In particular, for $x,y\in\mathbb{S}^n$, we can consider distinct kinds of segments with end-points $x$ and $y$, among them, traditional segments, geometric segments and semimodule segments. These kinds of segments in $\mathbb{S}^n$ lead to distinct kinds of convexity in $\mathbb{S}^n$. Let us turn our attention to traditional, semimodule and geometric convexity in $\mathbb{S}^n$. One can also investigate $\rho_{k,j}$-convex sets in $\mathbb{S}^n$ for each $k\in 3$ and $j\in\{1,2\}$.

 \subsection{Traditionally convex sets in $\mathbb{S}^n$}
 
 Let $n\in\omega \setminus \{ 0\}$. A convex set in $\mathbb{C}^n$ is a set $A\subseteq\mathbb{C}^n$ such that, for each pair $x,y\in A$, the traditional segment $[x, y]$ in the affine space $\mathbb{C}^n$ is contained in $A$.
 
 \begin{definition} A set $C\subseteq \mathbb{S}^n$ will be called \textbf{traditionally convex} if  
 $[^n\Phi](C)$ is  convex in $\mathbb{C}^n$.
 \end{definition}
\begin{remark} 
Let us notice that a nonempty set $C\subseteq\mathbb{S}$ is simultaneously compact and traditionally convex in $\mathbb{S}$ if and only if $C$ is a traditional segment in $\mathbb{S}$.
\end{remark}
 All traditionally convex subsets of $\mathbb{S}^n$ are connected.

\subsection{Semimodule convex sets in $\mathbb{S}^n$}

The semimodule structure of $\mathbb{S}^n$ allows us to think about semimodule convexity of subsets of $\mathbb{S}^n$.
In analogy to conventional algebra, we define a \textbf{semimodule segment} $[a, b]_{sm}$
with end-points  $a\in\mathbb{S}^n$ and $b\in \mathbb{S}^n$ as follows:
\begin{eqnarray}
\left[ a,b\right]_{sm}=\{ \left( \lambda \otimes a\right) \oplus \left( \gamma \otimes
b\right): \lambda , \gamma \in 
\mathbb{R}_{\varepsilon }, \mbox{ with } \lambda \oplus \gamma =0 \}.
\end{eqnarray}

\begin{definition} A set\ $A\ \subseteq \mathbb{S}^n$
is said to be  \textbf{semimodule convex} 
if, for each pair $a, b$ of points of $A$, the  semimodule segment $\left[ a, b\right]_{sm}$ is contained in $A$.
\end{definition}

Convexity in arbitrary idempotent semimodules was introduced by Cohen,
Gaubert and Quadrat \cite{Co}. \ Some analogues of famous theorems of
functional analysis like Minkowski's theorem, separation theorem, and some
notions of geometry like simplices and convex polytopes were considered in
 Max-Plus Algebra (for instance, one can refer to \cite{De}, \cite{Ga}, \cite{Ka}). 

\begin{remark}
We have the following semimodule segments in $\mathbb{S}$ for $r,s\in \mathbb{R}_+$ with $r<s$:
\begin{eqnarray}
 [ \oplus r,\ominus r ]_{sm} = \{ \oplus r, \ominus r, r^{\bullet} \}   \text{,} 
 \end{eqnarray}
 \begin{eqnarray}
 [ \oplus r, r^{\bullet} ]_{sm} = \{  \oplus r, r^{\bullet} \}  \text{,}  
 \end{eqnarray}
 \begin{eqnarray}
  [\ominus  r,\oplus s ]_{sm} =\{\ominus r, r^{\bullet} \} \cup \{\oplus t: r<t\le s  \} 
\end{eqnarray}
 \begin{eqnarray}
  [  r^{\bullet} ,\oplus s ]_{sm} =\{ r^{\bullet} \} \cup \{\oplus t: r<t\le s  \} 
\end{eqnarray}
 \begin{eqnarray}
  [\ominus  r, s^{\bullet} ]_{sm} =\{\ominus r \} \cup \{ t^{\bullet}: r\le t\le s  \} 
\end{eqnarray}
Other cases of semimodule segments in $\mathbb{S}$ are similar or obvious.

Notice that the intersection of two semimodule segments may not be a semimodule segment. Also, if $[a, b]_{sm}$ and $[c, d]_{sm}$ are semimodule segments in $\mathbb{S}$ such that $[a, b]_{sm}\cap [c, d]_{sm}\neq\emptyset$, then, in general, the union $[a, b]_{sm}\cup [c, d]_{sm}$ need not be a semimodule segment.  However, if $c\in [a,b]_{sm}$, then $[a,b]_{sm}=[a,c]_{sm}\cup [c,b]_{sm}$. All disconnected semimodule segments in $\mathbb{S}$ have either two or three connected components. 
\end{remark}

\begin{example}
Consider the semimodule segment in $\mathbb{S}$ between $a=\oplus 1$ and $b=\ominus 0$.
Then $[a,b]_{sm}=\{b\} \cup \{0^{\bullet}\} \cup \{ \oplus t: t\in (0,1]\}$, so the segment $[a, b]_{sm}$ has three connected components. Moreover, this segment is not closed in $\mathbb{S}$. For $i\in\{1, 2\}$,  both $\ominus 0$ and $0^{\bullet}$ are $d_i$-nearest points to $\varepsilon^{\bullet}$ in $[a,b]_{sm}$. More precisely, $P_{d_i, [a, b]_{sm}}(\varepsilon^{\bullet})=\{\ominus 0, 0^{\bullet}\}$ for $i\in\{1, 2\}$.
\end{example}

\begin{example}
A semimodule segment in $\mathbb{S}^2$ can have five connected components. For example, consider
\begin{eqnarray*}
C=[(\oplus 0, \ominus 1),(\ominus 1, \oplus 0)]_{sm}=\{ (\oplus 0,\ominus 1),(\ominus 1, \oplus 0),(\ominus 1, 0^{\bullet}),(0^{\bullet}, \ominus 1)\} \cup \\
\cup \{(\ominus t, \ominus s): \max(t,s)=1, t,s  >0. \}
\end{eqnarray*}
Then $C$ has exactly four isolated points and exactly one connected component of $C$ is of the form $[^2\Phi]^{-1}(L)$ where $L\subseteq\mathbb{C}$ is a broken line with its end-points deleted.
\end{example}

\subsection{Geometric convexity in $\mathbb{S}^n$}

\begin{definition} A set\ $A\ \subseteq \mathbb{S}^n$
is said to be  \textbf{geometrically convex} 
if, for each pair $a, b$ of points of $A$, the  geometric segment $\left[ a, b\right]_{g}$ is contained in $A$.
\end{definition}

The notions of a geometrically convex set and a $D_2$-convex set in $\mathbb{S}^n$ coincide. Of course, each traditionally convex subset of $\mathbb{S}^n$ is geometrically convex but not every geometrically convex set in $\mathbb{S}^n$ is traditionally convex. Since geometric segments in $\mathbb{S}^n$ are connected sets, each geometrically convex subset of $\mathbb{S}^n$ is connected.  In the case of $\mathbb{S}$, we can state the following propositions:

\begin{proposition} The geometric segment $[a, b]_g$ between  points $a,b\in \mathbb{S}$ is the intersection of all connected subsets of $\mathbb{S}$ that contain both $a$ and $b$. 
\end{proposition}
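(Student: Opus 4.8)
The plan is to write $I$ for the intersection of all connected subsets of $\mathbb{S}$ that contain both $a$ and $b$, and to prove $[a,b]_g=I$ by two inclusions. I would dispose of $I\subseteq[a,b]_g$ first and immediately: as recorded just before the statement, geometric segments are connected, and $[a,b]_g$ contains its endpoints $a,b$ (being a $D_2$-segment); hence $[a,b]_g$ is itself one of the sets in the family over which $I$ is formed, so $I\subseteq[a,b]_g$.

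All the work lies in $[a,b]_g\subseteq I$, that is, in showing that an arbitrary connected $C\subseteq\mathbb{S}$ with $a,b\in C$ is forced to contain every point of $[a,b]_g$. First I would pass through the homeomorphism $\Phi$ of Remark 33, which identifies $\mathbb{S}$ with the tripod $\Phi(\mathbb{S})\subseteq\mathbb{C}$: three closed rays $[0,+\infty)$ issuing from $\Phi(\zero)=0$ at mutual angles $2\pi/3$, the rays being the images of $\mathbb{S}^{\oplus},\mathbb{S}^{\ominus},\mathbb{S}^{\bullet}$. By Remark 43 applied with $n=1$ we have $[a,b]_g=\Phi^{-1}(L_{a,b})$, where $L_{a,b}$ is the shortest path joining $\Phi(a)$ and $\Phi(b)$ inside this tree; concretely, $[a,b]_g$ is a sub-arc of a single ray when $\sgn(a)=\sgn(b)$, and it is the union of the sub-arc from $a$ to $\zero$ with the sub-arc from $\zero$ to $b$ when $\sgn(a)\ne\sgn(b)$.

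The heart of the argument is the claim that every point $p$ lying in the interior of $[a,b]_g$ disconnects $\mathbb{S}$ between $a$ and $b$. For $p\ne\zero$, say $p\in\mathbb{S}^{s}$ with $s\in\{\oplus,\ominus,\bullet\}$, I would exhibit the separation $\mathbb{S}\setminus\{p\}=\pi_s^{-1}((\exp|p|,+\infty))\cup\pi_s^{-1}([0,\exp|p|))$, where $\pi_s:\mathbb{S}\to[0,+\infty)$ sends a point of $\mathbb{S}^{s}$ to its modulus $\exp|\cdot|$ and collapses the other two rays to $0$; because the rays meet only at $\zero$, the map $\pi_s$ is continuous and both preimages are open. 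A short check of the two cases above shows that the interior position of $p$ puts $a$ and $b$ on opposite sides of this separation. The remaining instance $p=\zero$ arises only when $\sgn(a)\ne\sgn(b)$ with both endpoints nonzero, and there the three punctured rays of $\mathbb{S}\setminus\{\zero\}$ furnish the separation with $a$ and $b$ on different rays. In every case, if $C$ were connected with $a,b\in C$ but $p\notin C$, then $C$ would be split by this separation while meeting both halves, contradicting connectedness; hence $p\in C$. Since also $a,b\in C$, this yields $[a,b]_g\subseteq C$ and therefore $[a,b]_g\subseteq I$.

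The main obstacle is exactly this disconnection step, and within it the routine but necessary verifications that the two halves are genuinely open in $\mathbb{S}$ and that $a$ and $b$ really land on opposite halves; both rest entirely on the tree structure of $\Phi(\mathbb{S})$, so once that structure is recorded the verifications are brief. The degenerate case $a=b$ needs only a separate line: then $[a,b]_g=\{a\}$, and since $\{a\}$ is connected and lies in every member of the family, $I=\{a\}$ as well. I note that the whole argument proceeds in $\mathbf{ZF}$, using only explicit separations rather than any sequential or choice-based reasoning.
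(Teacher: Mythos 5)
Your proof is correct, but there is nothing in the paper to compare it against: immediately after Proposition 69 the authors write that ``the proofs to Propositions 68 and 69 are so easy that we omit them,'' so your write-up supplies details the paper suppressed. Both inclusions are sound. The easy direction is exactly as you say: $[a,b]_g$ is connected (the paper records this just above the statement) and contains $a$ and $b$, so it is one of the sets being intersected. For the hard direction, your cut-point argument works: the maps $\pi_s$ are continuous by the pasting lemma on the three closed rays (the collapsed rays take the value $0$, which agrees with $\exp|\zero|=0$ at the junction), the two preimages are open, disjoint, and their union is $\mathbb{S}\setminus\{p\}$ since $\pi_s$ is injective on $\mathbb{S}^{s}$ and every point off $\mathbb{S}^{s}$ has $\pi_s$-value $0<\exp|p|$; your case check that $a$ and $b$ fall on opposite sides for every $p\in[a,b]_g\setminus\{a,b\}$, including the case $p=\zero$, is accurate, and the standard connectedness contradiction then forces $p\in C$. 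Two small merits of your version worth noting: you correctly isolate the degenerate case $a=b$ (where there are no cut points at all), and the whole argument uses only explicit separations, so it is a genuine $\mathbf{ZF}$ proof, in keeping with the paper's stated foundational aims and with Remark 71, where the authors observe that for $X=\mathbb{S}$ the relevant facts need no choice.
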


\begin{proposition}
 A set $A\subseteq\mathbb{S}$ is geometrically convex if and only if $A$ is connected.
\end{proposition}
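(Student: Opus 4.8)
The plan is to reduce everything to the preceding Proposition~68, which states that for $a,b\in\mathbb{S}$ the geometric segment $[a,b]_g$ equals the intersection of all connected subsets of $\mathbb{S}$ that contain both $a$ and $b$. Alongside this I will use the fact that each geometric segment in $\mathbb{S}$ is itself connected: by Remark~43 taken with $n=1$, we have $[a,b]_g=\Phi^{-1}(L_{a,b})$, where $L_{a,b}$ is the shortest broken line joining $\Phi(a)$ to $\Phi(b)$ inside $\Phi(\mathbb{S})$. Since $\Phi(\mathbb{S})$ is the union of three rays meeting at $\Phi(\varepsilon^\bullet)=0$, this broken line is an arc, and, as $\Phi$ is a homeomorphism onto its image, $[a,b]_g$ is connected.

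For the implication that connectedness forces geometric convexity, I would suppose $A\subseteq\mathbb{S}$ is connected and fix $a,b\in A$. Then $A$ is itself one of the connected subsets of $\mathbb{S}$ containing both $a$ and $b$, so by Proposition~68 we get $[a,b]_g\subseteq A$. As $a$ and $b$ were arbitrary, $A$ is geometrically convex. This direction is immediate once Proposition~68 is in hand, with no calculation required.

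For the converse, suppose $A$ is geometrically convex. If $A=\emptyset$ or $A$ is a singleton there is nothing to prove, so I fix a point $a_0\in A$. Geometric convexity gives $[a_0,b]_g\subseteq A$ for every $b\in A$, while trivially $b\in[a_0,b]_g$, so $A=\bigcup_{b\in A}[a_0,b]_g$. This exhibits $A$ as a union of connected sets all containing the common point $a_0$, whence $A$ is connected.

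I expect no serious obstacle, since the substantive geometric content is carried entirely by Proposition~68. The only points deserving attention are the connectedness of the individual geometric segments, which I would justify via Remark~43 and the explicit tripod shape of $\Phi(\mathbb{S})$ as above, and the degenerate cases $A=\emptyset$ and $A$ a singleton, which are vacuously both connected and geometrically convex.
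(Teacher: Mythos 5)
Your proof is correct. The paper itself omits the proofs of Propositions 68 and 69, declaring them ``so easy that we omit them,'' so there is no written argument to compare against; but what you give is evidently the intended one, assembled from exactly the ingredients the paper puts in place. The direction ``geometrically convex $\Rightarrow$ connected'' is the union-of-connected-sets-with-a-common-point argument, with connectedness of each segment $[a_0,b]_g$ justified via Remark 43 and the tripod shape of $\Phi(\mathbb{S})$; this is precisely the reasoning the paper compresses into the sentence preceding the propositions (``Since geometric segments in $\mathbb{S}^n$ are connected sets, each geometrically convex subset of $\mathbb{S}^n$ is connected''). The converse follows at once from Proposition 68, as you say, and citing it is legitimate: it precedes Proposition 69 and its proof (a direct analysis of the three rays) does not depend on the statement being proved, so there is no circularity. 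The whole argument is choice-free, consistent with the paper's \textbf{ZF} setting. One minor point: whether the empty set counts as connected is a matter of convention, but you flag the degenerate cases explicitly, which is all that is needed.
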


The proofs to Propositions 68 and 69 are so easy that we omit them. Of course, if $n>1$ then, for distinct $a, b\in\mathbb{S}^n$, the intersection of all connected subsets of $\mathbb{S}^n$ that contain both $a$ and $b$ is the two-point set $\{a, b\}$. 

Let us recall that, if $X$ is a topological space, $a, b\in X$, while $C$ is a continuum in $X$ such that $a,b\in C$, then $C$ is called irreducible between $a$ and $b$ if, for each continuum $D\subseteq C$ such that $a,b\in D$, the equality $D=C$ holds. The axiom of choice is used in the standard proof of the theorem of $\mathbf{ZFC}$ asserting that, for each pair of distinct points of a Hausdorff continuum $X$ there exists a continuum $C$ in $X$ such that $C$ is irreducible between $a$ and $b$. 

\begin{problem} Is there a model $\mathbf{M}$ of $\mathbf{ZF}$ such that there exists a Hausdorff continuum $X$ in $\mathbf{M}$ which has the property that, for a pair $a, b$ of distinct points of $X$, there does not exist in $\mathbf{M}$ a subcontinuum $C$ of  $X$ such that $C$ is irreducible between $a$ and $b$?
\end{problem}

\begin{remark} Suppose that we work inside a model $\mathbf{M}$ of $\mathbf{ZFC}$.  Let $X$ be a semidendrite in $\mathbf{M}$ and let $a,b\in X$. We can generalize properties of dendrites given on page 125 in \cite{M} to semidendrites. Namely, we can prove that there exists in $\mathbf{M}$ a unique continuum $C\subseteq X$ such that $a, b\in C$ and $C$ is irreducible between $a$ and $b$. Let us denote this continuum $C$ by $[a, b]_{c-g}$ and call it the \textbf{c-geometric segment} of $X$ between $a$ and $b$.  Now, assume, in addition, that $X$ is metrizable in $\mathbf{M}$ and that $a\neq b$. Then  $[a, b]_{c-g}$ is an arc with end-points $a$ and $b$ (see Lecture VIII of \cite{M}).  Notice that $[a, b]_{c-g}$ is the intersection of all connected subspaces of $X$ that contain both $a$ and $b$, so $[a, b]_{c-g}$ is the smallest (with respect to inclusion) among all connected subsets of $X$ that contain both $a$ and $b$. Let us call a set $A\subseteq X$ \textbf{c-geometrically convex} in $X$ if, for each pair $a, b$ of distinct points of $A$, the inclusion $[a, b]_{c-g}\subseteq A$ holds. Then a set $A\subseteq X$ is connected if and only if it is c-geometrically convex in $X$. If $X=\mathbb{S}$, then $\mathbf{M}$ can be any model of $\mathbf{ZF}$.
\end{remark} 

\begin{remark} For a pair $a,b$ of points of $\mathbb{R}^2$, let $\mathcal{C}(a,b)$ be the collection of all connected subsets of $\mathbb{R}^2$ which contain both $a$ and $b$. Then each continuum $C\in\mathcal{C}(a,b)$ such that $C$ is irreducible between $a$ and $b$ is a minimal (with respect to inclusion) element of $\mathcal{C}(a, b)$. Since $\bigcap\{ C: C\in\mathcal{C}(a,b)\}=\{a, b\}$,  to define a c-geometric segment in 
$\mathbb{R}^2$ between $a$ and $b$ as the intersection $\bigcap\{ C: C\in\mathcal{C}(a,b)\}$  does not lead to anything reasonable. However, it may be still interesting to investigate in deep all topological spaces $X$ which have the property that, for each pair $a, b$ of points of $X$, the intersection of all connected subsets of $X$ that contain both $a$ and $b$ is connected in $X$. 
\end{remark}

\begin{remark}
Let $[a,b]_g$ and $[c, d]_g$ be geometric segments in $\mathbb{S}$. Then the intersection $[a, b]_g\cap [c, d]_g$ is a geometric segment in $\mathbb{S}$. Also, if $c\in [a,b]_g$, then $[a,b]_g=[a,c]_g\cup [c,b]_g$. However, if $[a, b]_g\cap [c,d]_g\neq\emptyset$, then $[a, b]_g\cup [c, b]_g$ is a dendrite which need not be a geometric segment in $\mathbb{S}$. 
\end{remark}

\section{The minimizing vector theorem in $\mathbb{S}^n$}

In what follows, all Hilbert spaces are assumed to be over the field $\mathbb{R}$.  Given a Hilbert space $H$, we consider $H$ as a metric space equipped with the metric $\rho_H$ of $H$ which is induced by the inner product of $H$, i. e. $\rho_H(x, y)= \left\Vert x-y\right\Vert $ where $\left\Vert \cdot \right\Vert$ is the norm defined by the inner product of $H$. The minimizing vector theorem, also called the Hilbert projection theorem, is a famous result in
convex analysis saying that it holds true in $\mathbf{ZF+CC}$ that, for every point $x\in H$ in a Hilbert space  $H$
and every nonempty closed convex subset $C$ of  $H$, there exists a unique point $y\in C$
for which $\left\Vert x-y\right\Vert $ is the distance of $x$ from $C$.  In particular, this is true in every model $\mathbf{M}$ of $\mathbf{ZF+CC}$ for any closed affine subspace $C$ of $H$ provided $H$ is in $\mathbf{M}$ and we work on $H$ inside $\mathbf{M}$. In the case when $C$ is a closed affine subspace of the $H$, a necessary and sufficient condition for $y\in C$ to be the nearest point to $x$ in $C$  is that the vector $x-y$ be orthogonal to $C$.  A result of $\mathbf{ZF}$ is the following version of the vector minimizing theorem:

\begin{theorem} For every Hilbert space $H$, for each nonempty, simultaneously convex and Cantor complete subset $C$ of $H$ and for each $x\in H$, there exists a unique $y\in C$ such that $\left\Vert x-y\right\Vert $ is the distance of $x$ from $C$.
\end{theorem}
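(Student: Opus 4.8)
The plan is to follow the classical Hilbert-space argument via the parallelogram law, but to replace its one genuinely choice-dependent step --- the extraction of a convergent minimizing sequence --- by a direct appeal to Cantor completeness, which requires no choice. First I would set $m=\inf\{\Vert x-c\Vert : c\in C\}$; this real number exists in $\mathbf{ZF}$ because $\{\Vert x-c\Vert : c\in C\}$ is a nonempty set of nonnegative reals and $\mathbb{R}$ is order-complete. For each $n\in\omega\setminus\{0\}$ I would then put $F_n=C\cap\bar{B}_{\rho_H}(x, m+\frac1n)$. Each $F_n$ is $\rho_H$-closed in $C$, being the intersection of $C$ with a closed ball of $H$; the inclusions $F_{n+1}\subseteq F_n$ are immediate from $m+\frac1{n+1}\le m+\frac1n$; and each $F_n$ is nonempty because $m+\frac1n$ is not a lower bound of $\{\Vert x-c\Vert : c\in C\}$, so some $c\in C$ satisfies $\Vert x-c\Vert<m+\frac1n$. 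I stress that nonemptiness of $F_n$ is a separate existential statement for each individual $n$, so no choice function is being invoked.

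Next I would prove that $\lim_{n\to+\infty}\text{diam}_{\rho_H}(F_n)=0$, and this is where the convexity and the inner-product structure enter. If $c_1,c_2\in F_n$, then convexity of $C$ gives $\frac{c_1+c_2}{2}\in C$, whence $\Vert x-\frac{c_1+c_2}{2}\Vert\ge m$. Combining this with the parallelogram identity $\Vert c_1-c_2\Vert^2=2\Vert x-c_1\Vert^2+2\Vert x-c_2\Vert^2-4\Vert x-\frac{c_1+c_2}{2}\Vert^2$ and the bounds $\Vert x-c_i\Vert\le m+\frac1n$ yields $\Vert c_1-c_2\Vert^2\le 4(m+\frac1n)^2-4m^2=\frac{8m}{n}+\frac{4}{n^2}$, which tends to $0$. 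Hence $\text{diam}_{\rho_H}(F_n)\to 0$.

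At this point I would invoke the hypothesis that $C$ is Cantor complete in $(H,\rho_H)$: the sequence $(F_n)$ consists of nonempty $\rho_H$-closed-in-$C$ sets, is nested, and has vanishing diameters, so by Cantor completeness $\bigcap_{n}F_n\neq\emptyset$. Fixing any $y\in\bigcap_n F_n$ (a single element of a single nonempty set, which is legitimate in $\mathbf{ZF}$), one has $y\in C$ and $\Vert x-y\Vert\le m+\frac1n$ for all $n$, so $\Vert x-y\Vert=m$, giving existence. For uniqueness, if $y_1,y_2\in C$ both realize the distance $m$, then $\frac{y_1+y_2}{2}\in C$ forces $\Vert x-\frac{y_1+y_2}{2}\Vert\ge m$, and the parallelogram identity gives $\Vert y_1-y_2\Vert^2\le 4m^2-4m^2=0$; thus $y_1=y_2$. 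Equivalently, since $\bigcap_n F_n$ is contained in every $F_n$, its diameter is $0$, so it is already the desired singleton.

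The hard part is conceptual rather than computational: the main obstacle is precisely to avoid selecting one witness $c_n\in C$ with $\Vert x-c_n\Vert<m+\frac1n$ for all $n$ simultaneously, since forming such a minimizing sequence would, in $\mathbf{ZF}$, require a choice function and would only deliver a $\rho_H$-Cauchy sequence whose limit need not lie in $C$ when $C$ is merely complete rather than closed (cf.\ Theorem 49 and Remark 50). The nested-closed-sets formulation sidesteps this entirely: the only existential commitments are the per-index nonemptiness of the $F_n$, and the single global conclusion $\bigcap_n F_n\neq\emptyset$ is produced in one stroke by Cantor completeness, which is exactly the hypothesis tailored to this choice-free setting.
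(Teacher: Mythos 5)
Your proof is correct: the infimum exists in $\mathbf{ZF}$, each $F_n=C\cap\bar{B}_{\rho_H}(x,m+\frac{1}{n})$ is nonempty (a per-index existential claim needing no choice function), closed in $C$, nested, and the parallelogram law together with convexity gives $\mathrm{diam}_{\rho_H}(F_n)^2\leq \frac{8m}{n}+\frac{4}{n^2}\to 0$, so Cantor completeness yields the minimizer and the same identity yields uniqueness. The paper in fact states this theorem without any proof, so there is nothing to compare against; your argument is precisely the choice-free adaptation of the classical Hilbert projection argument that the hypothesis of Cantor completeness (rather than closedness or mere completeness) is tailored to enable, and it is evidently what the authors intended.
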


As we have already mentioned in subsection 6.1, according to Theorem 7 of \cite{Ker}, in a model of $\mathbf{ZF}$,  a complete subset of a metric space need not be Cantor complete. Therefore, one may ask whether it holds true  in every model $\mathbf{M}$ of $\mathbf{ZF}$ that all simultaneously complete and convex subsets of a Hilbert space $H$ in $\mathbf{M}$ are closed in $H$. A partial answer to this question is given in the following proposition:

\begin{proposition} Let $H$ be a finitely dimensional Hilbert space and let $C$ be a nonempty convex complete subset of $H$. Then $C$ is closed in $H$.
\end{proposition}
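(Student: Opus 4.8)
The plan is to reduce everything to a subset of $\mathbb{R}^n$, to prove in $\mathbf{ZF}$ that a convex subset of $\mathbb{R}^n$ is separable with an \emph{explicitly enumerable} dense subset, and then to invoke the fact (already used in the proof of Theorem 57) that in $\mathbf{ZF}$ every separable complete subset of a metric space is closed. First I would fix a linear isometry of $H$ onto $\mathbb{R}^n$ equipped with $d_{e,n}$; since $\dim H=n$ is finite, such an isometry is available in $\mathbf{ZF}$, because a single orthonormal basis suffices and producing it requires only finitely many steps of Gram--Schmidt. Under this isometry $C$ becomes a nonempty, convex, complete subset of $\mathbb{R}^n$, so it is enough to treat $H=\mathbb{R}^n$.

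The heart of the argument is the separability of $C$, for which I would pass to the affine hull $\operatorname{aff}(C)$. Choosing a single point $c_0\in C$ (one instance of existential instantiation, harmless in $\mathbf{ZF}$), the direction space $V=\operatorname{aff}(C)-c_0$ is a linear subspace of $\mathbb{R}^n$ of some dimension $m\le n$, and $V$ determines canonically its orthogonal projection $P$; applying Gram--Schmidt in index order to $Pe_1,\dots,Pe_{n}$ yields a definite orthonormal basis $v_1,\dots,v_m$ of $V$ with no further choices. The affine isometry $T(t)=c_0+\sum_{i\in m}t_i v_i$ carries a convex set $C_0=T^{-1}(C)\subseteq\mathbb{R}^m$ with $\operatorname{aff}(C_0)=\mathbb{R}^m$ onto $C$. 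A convex set whose affine hull is all of $\mathbb{R}^m$ contains $m+1$ affinely independent points, hence a nondegenerate simplex, hence has nonempty interior in $\mathbb{R}^m$; fixing an interior point (for instance the barycenter of that simplex) and using that the interior of a convex set with nonempty interior is dense in it, I would conclude that $\operatorname{int}(C_0)\cap\mathbb{Q}^m$ is dense in $C_0$. This set is countable with the canonical enumeration inherited from $\mathbb{Q}^m$, so $T(\operatorname{int}(C_0)\cap\mathbb{Q}^m)$ is an explicitly enumerable dense subset of $C$.

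Finally, with a fixed enumeration $(d_k)_{k\in\omega}$ of such a dense subset $D\subseteq C$, closedness follows directly in $\mathbf{ZF}$: given $x\in\operatorname{cl}_{d_{e,n}}(C)$, density forces each set $S_p=\{k\in\omega: d_{e,n}(d_k,x)<\tfrac1p\}$ to be nonempty, and putting $k_p=\min S_p$ (the minimum exists by the well-ordering of $\omega$, so no choice is used) produces a $d_{e,n}$-Cauchy sequence $(d_{k_p})$ in $C$ converging to $x$; completeness of $C$ then gives $x\in C$. The main obstacle is entirely set-theoretic rather than geometric: one must carry out the separability step so that every selection is either finite in number (the base point $c_0$ and the simplex) or canonical (the basis from Gram--Schmidt, the rationals $\mathbb{Q}^m$), since the whole point of the surrounding section is that a complete set may fail to be closed precisely when genuinely infinitely many choices are required. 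Convexity together with finite dimensionality is exactly what makes a canonical countable dense set available in $\mathbf{ZF}$.
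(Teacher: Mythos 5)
Your proof is correct, but it reaches closedness by a genuinely different route than the paper. The paper's proof is a direct, coordinate-free version of the same convex geometry: with $Y$ the affine hull of $C$ (the intersection of all affine subspaces of $H$ containing $C$), finite dimensionality gives $\text{int}_Y(C)\neq\emptyset$; fixing $a\in\text{int}_Y(C)$ and taking an \emph{arbitrary} $b\in\text{cl}_Y(C)$, the segment $[a,b]$ lies in $\text{cl}_Y(C)$ while $[a,b]\setminus\{b\}\subseteq\text{int}_Y(C)\subseteq C$, so an explicitly defined sequence on this segment, say $x_k=\tfrac{1}{k}a+(1-\tfrac{1}{k})b$, is a Cauchy sequence of points of $C$ converging to $b$, and completeness forces $b\in C$; hence $C=\text{cl}_Y(C)=\text{cl}_H(C)$. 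Thus, where you manufacture a single canonical countable dense subset of $C$ (rational points of the relative interior, transported by Gram--Schmidt isometries) and then quote the $\mathbf{ZF}$ lemma that separable complete subsets of metric spaces are closed, the paper never mentions separability at all: convexity already hands it, for each individual closure point, an explicit convergent sequence, so no enumeration of $\mathbb{Q}^m$ and no reduction to $\mathbb{R}^n$ are needed. Both arguments rest on the same two convexity facts (nonempty relative interior in finite dimension, and the half-open segment from a relative interior point to a closure point staying in the relative interior), so nothing in your argument is wrong; it just carries heavier machinery. What that machinery buys is a stronger by-product --- every convex subset of $\mathbb{R}^n$ is canonically separable in $\mathbf{ZF}$ --- and your proof runs exactly parallel to the paper's own proof that $\mathbf{CC}(c\mathbb{R})$ implies that well placed complete subsets of $\mathbb{R}^n$ are closed, with convexity playing for you the role that ``well placed'' plays there. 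One small remark: your insistence that every selection be canonical is stricter than necessary; finitely many arbitrary choices (a basis, the point $c_0$, a simplex) are always admissible in $\mathbf{ZF}$, as the paper's own free choices of $a$ and $b$ illustrate, and canonicality genuinely matters only where infinitely many selections would otherwise be required --- which is precisely the step your rational-point construction, and equally the paper's explicit formula for $(x_k)$, is designed to avoid.
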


\begin{proof} Let $Y$ be the intersection of all affine subspaces of $H$ that contain $C$. Since $H$ is finitely dimensional, it is known that $\text{int}_Y(C)\neq\emptyset$. Let $a\in\text{int}_Y(C)$ and $b\in\text{cl}_Y(C)$. Then the interval $I=[a, b]=\{(1-t)a+tb: t\in\mathbb{R},  0\leq t\leq 1\}$ is contained in $\text{cl}_Y(C)$, while $I\setminus\{ b\}$ is a subset of $\text{int}_{Y}(C)$. Of course, there exists a sequence $(x_n)$ of points of $I\setminus\{b\}$ which converges in $H$ to $b$. Since $(x_n)$ is a Cauchy sequence of points of $C$, we have $b\in C$ because $C$ is complete. Hence $C=\text{cl}_Y(C)=\text{cl}_H(C)$. 
\end{proof}

At this moment, we do not know a satisfactory solution to the following problem:

\begin{problem} Is is true in $\mathbf{ZF}$ that, for an arbitrary infinitely dimensional Hilbert space $H$, all simultaneously convex and complete subsets of $H$ are Cantor complete in $H$?
\end{problem}

We recommend \cite{FM} as one of the newest survey articles on the minimizing vector theorem. We are going to state some simple versions of this theorem for $\mathbb{S}^n$.

\subsection{A characterization of Chebyshev sets in $\mathbb{S}$}

For $i\in\{1, 2\}$, the following theorem gives a complete characterization of $d_i$-Chebyshev sets in $\mathbb{S}$:

\begin{theorem}
For each nonempty  closed subset $C$ of $\:\mathbb{S}$,
 the following conditions are equivalent:
\begin{enumerate}
\item[(i)] $C$ is connected;
\item[(ii)] $C$ is a $d_1$-Chebyshev set;
\item[(iii)] $C$ is a $d_2$-Chebyshev set.
\end{enumerate}
\end{theorem}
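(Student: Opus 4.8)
The plan is to transport the statement into the plane for $d_1$ and to exploit the metric-tree structure for $d_2$, treating both metrics in parallel. Via the canonical embedding, $(\mathbb{S},d_1)$ is isometric to the planar ``tripod'' $\Phi(\mathbb{S})$, whose three legs $\Phi(\mathbb{S}^{\oplus})$, $\Phi(\mathbb{S}^{\ominus})$, $\Phi(\mathbb{S}^{\bullet})$ emanate from the origin $\Phi(\varepsilon^{\bullet})=0$ and meet pairwise at the angle $120^{\circ}$, while in $(\mathbb{S},d_2)$ the same set is the geodesic star in which each leg is isometric to a half-line and $\varepsilon^{\bullet}$ is the branch point. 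First I would record that $\mathbb{S}$ is $d_i$-boundedly compact (has the Heine--Borel property) in $\mathbf{ZF}$: every closed $d_i$-ball is a finite union of compact segments, and compactness of a real interval is a theorem of $\mathbf{ZF}$. Consequently every nonempty closed subset of $\mathbb{S}$ is $d_i$-proximinal (a nearest point exists because the continuous function $c\mapsto d_i(x,c)$ attains its infimum on the compact truncation $C\cap\bar{B}_{d_i}(x,r)$), and, by Corollary 13, every closed $d_i$-Chebyshev set carries a continuous metric projection.

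The implications $(ii)\Rightarrow(i)$ and $(iii)\Rightarrow(i)$ are then immediate and uniform. If $C$ is a nonempty closed $d_i$-Chebyshev set, then $C$ is $d_i$-boundedly compact, so Corollary 13 makes the projection $p_{d_i,C}\colon\mathbb{S}\to C$ continuous. Since $p_{d_i,C}(c)=c$ for every $c\in C$, this projection is a continuous surjection of $\mathbb{S}$ onto $C$; as $\mathbb{S}$ is path-connected, its continuous image $C$ is connected. This settles the whole ``Chebyshev $\Rightarrow$ connected'' half without any appeal to choice.

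For $(i)\Rightarrow(ii)$ and $(i)\Rightarrow(iii)$ existence of a nearest point is already guaranteed by proximinality, so only uniqueness is at stake. Here I would first classify the nonempty closed connected subsets of $\mathbb{S}$ using Proposition 69 (connectedness equals geometric convexity, i.e. stability under geodesic segments $[a,b]_g$): each such $C$ is either contained in a single closed leg, where it is a closed interval, or it contains $\varepsilon^{\bullet}$ and meets each leg in an initial segment reaching out to some $q_i$. Given a query point $x$, lying on a leg $R$ at arclength $\xi$ from $\varepsilon^{\bullet}$, the nearest point is pinned down by a single monotonicity lemma, and proving this lemma for the Euclidean metric $d_1$ is the main obstacle, precisely because $d_1$ is not the geodesic metric and the bent segments $[\,\cdot\,,\varepsilon^{\bullet}]\cup[\varepsilon^{\bullet},\,\cdot\,]$ are not Euclidean-convex.

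The lemma asserts that, for $x$ on a leg $R$ and a point $c$ at arclength $s$ on a different leg, the distance to $x$ is strictly increasing in $s$: for $d_2$ this is the defining formula $d_2(x,c)=\xi+s$, whereas for $d_1$ a direct computation gives $d_1(x,c)^2=\xi^2+\xi s+s^2$, the decisive $+\xi s$ term coming from $\cos 120^{\circ}=-\tfrac12$. Thus the obtuse $120^{\circ}$ angle is exactly what forces points on foreign legs to be dominated either by $\varepsilon^{\bullet}$ or by the point of $C\cap R$ closest to $\varepsilon^{\bullet}$; had the legs met at an acute angle the nearest foreign-leg point could be interior and uniqueness would break. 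Granting this lemma, a short case analysis (according to whether $x$ shares its leg with the relevant interval of $C$ and whether $\xi$ falls inside $C\cap R$) identifies the nearest point by an explicit formula --- the clamp of $x$ to $C\cap R$ in the on-leg case, and otherwise the unique $C$-point closest to $\varepsilon^{\bullet}$ on $x$'s leg, or $\varepsilon^{\bullet}$ itself --- and in every case this point is unique. Since the whole argument is constructive and uses only the $\mathbf{ZF}$-valid compactness of segments together with Corollary 13, the resulting proof is a proof in $\mathbf{ZF}$.
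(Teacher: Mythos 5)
Your proof is correct, but it takes a genuinely different route from the paper's in one of the two directions, namely Chebyshev $\Rightarrow$ connected. The paper proves that direction by contraposition: if $C$ were disconnected, one takes a connected component $K$ of $\mathbb{S}\setminus C$, finds two distinct points $a_1,a_2\in C\cap\mathrm{cl}_{\mathbb{S}}(K)$, and exhibits points of $K$ having both $a_1$ and $a_2$ as $d_i$-nearest points, so that $P_{d_i,C}$ fails to be single-valued. You instead argue directly: closedness of $C$ in the boundedly compact space $\mathbb{S}$ (Heine--Borel for $\mathbb{R}^2$ being a theorem of $\mathbf{ZF}$) makes $C$ itself $d_i$-boundedly compact, Corollary 13 then makes $p_{d_i,C}$ a continuous retraction of the path-connected space $\mathbb{S}$ onto $C$, and a continuous image of a connected space is connected. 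This is arguably cleaner: it reuses the paper's own $\mathbf{ZF}$ machinery (Theorem 12 and Corollary 13) and avoids the geometric construction of an equidistant point, which the paper only sketches. In the converse direction (connected $\Rightarrow$ Chebyshev) both arguments follow the same scheme --- split according to whether $\varepsilon^{\bullet}\in C$, observe that on a single leg connectedness coincides with convexity, and treat the star-like case separately --- but where the paper writes ``it is easily seen'', you supply the actual content: the law-of-cosines identity $d_1(x,c)^2=\xi^2+\xi s+s^2$ (coming from $\cos 120^{\circ}=-\tfrac12$) together with $d_2(x,c)=\xi+s$, whose strict monotonicity in $s$ pins down the nearest point uniquely in every configuration; this makes explicit why the obtuse angle of the embedded tripod is what the theorem really rests on. One small slip of wording: in your final case list, ``the unique $C$-point closest to $\varepsilon^{\bullet}$ on $x$'s leg'' should read ``the unique point of $C$ closest to $\varepsilon^{\bullet}$'' (in that configuration $C$ does not meet $x$'s leg at all); your monotonicity lemma nevertheless yields uniqueness there exactly as you claim, so this does not affect the validity of the argument.
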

\begin{proof}  Assume $C$ is connected and $x\in\mathbb{S}$.  Consider the following two cases (a) and (b):

(a)  $\varepsilon^{\bullet}\notin C$. Then $C$ is contained in one of the lines $\mathbb{S}^{\oplus}$, $\mathbb{S}^{\ominus}$, $\mathbb{S}^{\bullet}$, where connnectedness is equivalent with traditional convexity, so $P_{d_1, C}(x)=P_{d_2, C}(x)$ and $P_{d_1, C}(x)$ is a singleton.

(b) $\varepsilon^{\bullet} \in C$. Then $C$ is star-like; hence,  it is easily seen that $P_{d_1, C}(x)=P_{d_2, C}(x)$ and $P_{d_1, C}(x)$ is a singleton which lies on the same line as $x$.  

In consequence,  $(i)$ implies both $(ii)$ and $(iii)$. On the other hand, if $C$ were disconnected, there would exist a connected component $K$ of $\mathbb{S}\setminus C$, distinct points $a_1, a_2\in C\cap\text{cl}_{\mathbb{S}}(K)$  and points $y_1, y_2\in K$ such that $a_1, a_2\in P_{d_i, C}(y_i)$ for $i\in\{1, 2\}$. Therefore, each of $(ii)$ and $(iii)$ implies $(i)$. 
\end{proof}

\begin{corollary} If $C$ is a nonempty, simultaneously connected and closed subset of $\mathbb{S}$, then $p_{d_1, C}=p_{d_2, C}$.
\end{corollary}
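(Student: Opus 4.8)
The plan is to observe that this corollary is essentially immediate from the \emph{proof} of Theorem~76, not merely from its statement. First I would invoke Theorem~76: since $C$ is nonempty, closed and connected, $C$ is simultaneously a $d_1$-Chebyshev set and a $d_2$-Chebyshev set, so for every $x\in\mathbb{S}$ both $P_{d_1,C}(x)$ and $P_{d_2,C}(x)$ are singletons and the metric projections $p_{d_1,C},p_{d_2,C}\colon\mathbb{S}\to C$ are well defined. It therefore suffices to prove the pointwise set equality $P_{d_1,C}(x)=P_{d_2,C}(x)$ for each $x\in\mathbb{S}$, because then the unique elements of the two singletons coincide, i.e.\ $p_{d_1,C}(x)=p_{d_2,C}(x)$, and hence $p_{d_1,C}=p_{d_2,C}$.

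To establish this pointwise equality I would reuse the case analysis already carried out for Theorem~76. In the case $\varepsilon^{\bullet}\notin C$, connectedness confines $C$ to a single one of the three lines $\mathbb{S}^{\oplus},\mathbb{S}^{\ominus},\mathbb{S}^{\bullet}$, on which $d_1$ and $d_2$ are equal by Remark~33; in the case $\varepsilon^{\bullet}\in C$, the set $C$ is star-like about the branch point. In both cases the proof of Theorem~76 records exactly that $P_{d_1,C}(x)=P_{d_2,C}(x)$, so one route is simply to quote that computation and conclude.

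If instead a self-contained verification is wanted, the computation I would carry out is the following. Writing $\varepsilon^{\bullet}$ for the branch point, when $x$ and the relevant part of $C$ lie on different lines every path realizing $d_2$ passes through $\varepsilon^{\bullet}$, so $d_2(x,c)=d_2(x,\varepsilon^{\bullet})+d_2(\varepsilon^{\bullet},c)$; hence the $d_2$-nearest point is the point of $C$ closest to $\varepsilon^{\bullet}$ along its line. Under $\Phi$ the three lines meet at the origin at angles $2\pi/3$, so if $s,t\ge 0$ denote the radial parameters $e^{r}$ of $\Phi(x)$ and $\Phi(c)$, then $d_1(x,c)^2=s^2+t^2+st$, which is strictly increasing in $t$ for $s>0$ and is therefore also minimized at the point of $C$ closest to $\varepsilon^{\bullet}$. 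Thus the $d_1$- and $d_2$-nearest points agree, and on a single line they agree automatically because the two metrics coincide there. The one place a careless argument could slip — and thus the main obstacle — is confirming that the $d_1$-nearest point really lands on the same line as $x$ in the star-like case (and at the foot of $C$ nearest $\varepsilon^{\bullet}$ in the separated case); this is precisely what the monotonicity of $s^2+t^2+st$ in $t$ guarantees. Once that is in hand, $P_{d_1,C}(x)=P_{d_2,C}(x)$ for all $x$, yielding $p_{d_1,C}=p_{d_2,C}$.
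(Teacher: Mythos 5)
Your proposal is correct and matches the paper's intent exactly: the paper states this corollary without proof precisely because its proof of Theorem 76 already records, in both cases (a) and (b), that $P_{d_1,C}(x)=P_{d_2,C}(x)$ is a singleton for every $x\in\mathbb{S}$. Your supplementary law-of-cosines computation ($d_1(x,c)^2=s^2+t^2+st$, increasing in $t$) is a sound fleshing-out of the step the paper dismisses as ``easily seen,'' but it is the same case analysis, not a different route.
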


\begin{proposition}
For each nonempty closed set $C\subseteq \mathbb{S}$, that $C$ is geometrically convex in $\mathbb{S}$ is equivalent to each one of conditions $(i)-(iii)$ of Theorem 76.
\end{proposition}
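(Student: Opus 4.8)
The plan is to recognize that this proposition follows immediately by chaining two results already established in the excerpt, so essentially no new argument is required. First I would recall that Proposition 69 asserts, for an \emph{arbitrary} subset $A\subseteq\mathbb{S}$, that $A$ is geometrically convex if and only if $A$ is connected. The key observation is that this equivalence carries no closedness hypothesis whatsoever, so it applies verbatim to the nonempty closed set $C$ under consideration here, identifying geometric convexity of $C$ with connectedness of $C$.

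Next I would invoke Theorem 76, whose condition $(i)$ is precisely that $C$ be connected, and which establishes that, for a nonempty closed $C\subseteq\mathbb{S}$, the three conditions $(i)$ connectedness, $(ii)$ being a $d_1$-Chebyshev set, and $(iii)$ being a $d_2$-Chebyshev set are mutually equivalent. Chaining the two results yields
$$C\text{ geometrically convex}\iff C\text{ connected}\iff (ii)\iff (iii),$$
so geometric convexity of $C$ is equivalent to each one of the conditions $(i)$--$(iii)$, as claimed.

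Since conditions $(i)$--$(iii)$ are already known from Theorem 76 to be pairwise equivalent, the only fresh content to verify is the single equivalence between geometric convexity and connectedness, and that is exactly the statement of Proposition 69. Consequently there is no genuine obstacle to overcome: the one point worth flagging explicitly is that Proposition 69 is formulated for arbitrary subsets of $\mathbb{S}$, so the closedness assumption on $C$ (which is needed to apply Theorem 76) does not interfere with applying Proposition 69, and the two results compose cleanly.
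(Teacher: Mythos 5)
Your proposal is correct and matches the paper's intent exactly: the paper states this proposition without proof precisely because it follows immediately by chaining Proposition 69 (geometric convexity $\iff$ connectedness, for arbitrary subsets of $\mathbb{S}$) with the equivalence of conditions $(i)$--$(iii)$ for nonempty closed sets established in the preceding characterization theorem. Your remark that Proposition 69 needs no closedness hypothesis, while the closedness of $C$ is what licenses the appeal to that theorem, is exactly the right point to flag.
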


Let us have a brief look at semimodule convex closed subsets of $\mathbb{S}$. 

\begin{example}
The semimodule segment  $A=[\oplus 0,\ominus 0]_{sm}=\{ \oplus 0, \ominus 0, 0^{\bullet}\}$ is a closed, semimodule convex set in $\mathbb{S}$ with three connected components.  For each $i\in\{1, 2\}$, we have $P_{d_i, A}(\varepsilon^{\bullet})=A$.
\end{example}

\begin{proposition}
Let $C$ be a nonempty, simultaneously semimodule convex and closed subset of $\mathbb{S}$. Then $C$ is $d_i$-proximinal and, for each $x\in\mathbb{S}$, the set $P_{d_i, C}(x)$ consists of at most three points. 
\end{proposition}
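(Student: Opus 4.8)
The plan is to reduce everything to the three rays $\mathbb{S}^{\oplus}$, $\mathbb{S}^{\ominus}$, $\mathbb{S}^{\bullet}$ into which $\mathbb{S}$ decomposes, these being the three Euclidean half-lines issuing from the common vertex $\varepsilon^{\bullet}$, which $\Phi$ sends to the origin of $\mathbb{C}$. Writing $C_u=C\cap\mathbb{S}^{u}$ for $u\in\{\oplus,\ominus,\bullet\}$, so that $C=C_{\oplus}\cup C_{\ominus}\cup C_{\bullet}$, the first step is to extract the structural consequence of semimodule convexity \emph{along a single ray}. If $a,b\in\mathbb{S}^{u}$ (that is, $\sgn(a)=\sgn(b)$), then a direct evaluation of $[a,b]_{sm}=\{(\lambda\otimes a)\oplus(\gamma\otimes b):\lambda,\gamma\in\mathbb{R}_{\varepsilon},\ \lambda\oplus\gamma=0\}$, using $\lambda\otimes a=\sgn(a)(\lambda+|a|)$ (this is the ``obvious'' same-ray case omitted from Remark 64), yields exactly the radial interval $\{w\in\mathbb{S}^{u}:|a|\le|w|\le|b|\}$ between $a$ and $b$. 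Hence semimodule convexity forces each $C_u$ to be radially convex (closed under passing to intermediate radii), and since $C$ is closed, each nonempty $C_u$ is a closed radial interval on its ray: a point, a bounded closed segment, or an unbounded closed ray in the radius coordinate $e^{|\cdot|}$.

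Next I would show that for each $u$ with $C_u\neq\emptyset$ and each $x\in\mathbb{S}$, the point of $C_u$ nearest to $x$ exists, is unique, and is given by an explicit $\mathbf{ZF}$-formula. For $d_1$ this is the Euclidean orthogonal projection of $\Phi(x)$ onto the segment or half-line $\Phi(C_u)\subseteq\mathbb{C}$, clamped to its endpoints; existence is elementary and uniqueness follows from strict convexity of the Euclidean norm. For $d_2$ I would use the explicit distance formula of Definition 32: when $x\in\mathbb{S}^{u}$ the distance from $x$ to the radius-$r$ point of $C_u$ is $|e^{|x|}-r|$, minimized by clamping $e^{|x|}$ to the radial interval; when $x\notin\mathbb{S}^{u}$ the distance is $e^{|x|}+r$, minimized at the smallest radius occurring in $C_u$. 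In every case a nonempty closed radial interval attains the infimum at a single point, which I denote $q_u$, at distance $\delta_u=d_i(x,q_u)$.

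Finally I would assemble the global answer. Since $C$ is the union of at most three pieces, $d_i(x,C)=\min_u\delta_u=:\delta$ is a minimum over at most three already-attained values, hence attained; this gives $d_i$-proximinality. Moreover any $y\in C$ with $d_i(x,y)=\delta$ must lie in some $C_u$ and realize the minimum there, so $y=q_u$ for some $u$ with $\delta_u=\delta$. Therefore $P_{d_i,C}(x)=\{q_u:\delta_u=\delta\}$, a set of at most three points, the coincidences at the shared vertex $\varepsilon^{\bullet}$ only lowering the count, as in the value $P_{d_i,A}(\varepsilon^{\bullet})=\{\oplus 0,\ominus 0,0^{\bullet}\}$ of the preceding example.

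I expect the delicate part to be the very first step, namely confirming that semimodule convexity really forces radial convexity of each $C_u$: the cross-ray semimodule segments recorded in Remark 64 behave quite differently, since they jump between rays and need not even be connected, so one must isolate the same-ray computation and verify it for all three sign types, including the degenerate case in which one endpoint is $\varepsilon^{\bullet}$. A secondary but important point is that the whole argument stays within $\mathbf{ZF}$: every $q_u$ is produced by an explicit formula and only the three numbers $\delta_{\oplus},\delta_{\ominus},\delta_{\bullet}$ are compared, so no choice principle is invoked; the role of the closedness hypothesis is precisely to guarantee that each radial interval is closed, and hence that the relevant infimum is attained.
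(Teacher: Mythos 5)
Your proposal is correct and takes essentially the same approach as the paper: the paper's one-sentence proof rests precisely on the observation that, on each of the lines $\mathbb{S}^{\oplus}$, $\mathbb{S}^{\ominus}$, $\mathbb{S}^{\bullet}$, semimodule convexity agrees with traditional convexity, which is exactly your same-ray computation of $[a,b]_{sm}$. The explicit clamping formulas for the nearest point on each ray and the comparison of the three resulting distances are just the details the paper leaves implicit as ``clear.''
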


\begin{proof}
This is clear since, on each of the lines $\mathbb{S}^{\oplus}$, $\mathbb{S}^{\ominus}$, $\mathbb{S}^{\bullet}$, semimodule convexity agrees with the traditional convexity.
\end{proof}

\subsection{Chebyshev sets in $\mathbb{S}^{n}$}

\begin{definition}
A set $A\subseteq \mathbb{S}^{n}$ is said to be \textbf{box semimodule convex} if it
is a Cartesian product $\prod_{i\in n} A_i$ of semimodule convex subsets  $A_i$ of $\mathbb{S}$. 
\end{definition}

We can generalize Proposition 81 as follows:

\begin{proposition}
Let $j, k\in\{1, 2\}$. Suppose that $A$ is a nonempty, closed,  box semimodule convex subset of $\mathbb{S}^{n}$. Then $A$ is $\rho_{k,j}$-proximinal and, moreover, for each $x\in\mathbb{S}^n$, the set $P_{\rho_{k, j}, A}(x)$ consists of at most $3^{n}$ points.
\end{proposition}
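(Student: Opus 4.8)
The plan is to reduce the $n$-dimensional statement to the one-dimensional case treated in Proposition 81, using the product formula for nearest-point sets from Theorem 19. First I would observe that, since $A=\prod_{i\in n}A_i$ is nonempty, every factor $A_i$ is nonempty. Next I would check that each $A_i$ is closed in $\mathbb{S}$: fixing a coordinate $i_0\in n$ and a point $z\in\text{cl}_{\mathbb{S}}(A_{i_0})$, I complete $z$ to a point $w\in\mathbb{S}^n$ by setting $w(i_0)=z$ and choosing, for each of the finitely many remaining indices $i\in n\setminus\{i_0\}$, a base point $w(i)\in A_i$ from the nonempty factor $A_i$; then every neighbourhood of $w$ meets $A$, so $w\in\text{cl}_{\mathbb{S}^n}(A)=A$, whence $z=w(i_0)\in A_{i_0}$. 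Because $n$ is finite, the finitely many choices involved are available in $\mathbf{ZF}$, so no form of the axiom of choice enters here.

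Having established that each $A_i$ is a nonempty, closed, semimodule convex subset of $\mathbb{S}$, I would apply Proposition 81 to conclude that, for every $i\in n$ and every $y\in\mathbb{S}$, the set $P_{d_j, A_i}(y)$ is nonempty and consists of at most three points; in particular each $A_i$ is $d_j$-proximinal in $\mathbb{S}$. The central step is then Theorem 19(i), applied with $X_i=\mathbb{S}$ and $d_i=d_j$ for every $i\in n$, which is legitimate precisely because $k\in\{1,2\}$; it yields, for each $x\in\mathbb{S}^n$,
$$P_{\rho_{k,j}, A}(x)=\prod_{i\in n}P_{d_j, A_i}(x(i)).$$
Since every factor on the right-hand side is nonempty, the product is nonempty, so $A$ is $\rho_{k,j}$-proximinal; and since every factor has at most three elements, the number of elements of the product is at most $3^n$, which is the required bound.

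The argument is short, so the only genuine point to watch is that the reduction must use $k\in\{1,2\}$ and not $k=0$: Example 20 shows that the product formula for nearest-point sets fails for $\rho_0$, which is exactly the reason the hypothesis is stated with $k\in\{1,2\}$. The mildest technical obstacle is the passage from closedness of the product $A$ to closedness of each factor $A_i$, but as indicated above this is routine and choice-free for finite $n$. Everything else is a direct combination of Theorem 19(i) and Proposition 81.
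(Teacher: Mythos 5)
Your argument is correct, and for the cardinality bound it is exactly the paper's argument: both of you combine the product formula of Theorem 19(i) (legitimate since $k\in\{1,2\}$, as you note via Example 20) with the three-point bound of Proposition 81. The difference lies in how proximinality is obtained. The paper gets it without any use of the box structure: closed balls in $(\mathbb{S}^n,\rho_{k,j})$ are compact, so \emph{every} nonempty closed subset of $\mathbb{S}^n$ is $\rho_{k,j}$-proximinal, and the convexity hypothesis enters only for the bound on the number of nearest points. You instead derive proximinality from the same product formula, using that each factor $P_{d_j,A_i}(x(i))$ is nonempty by Proposition 81 and that a product of finitely many nonempty sets is nonempty in $\mathbf{ZF}$. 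Both routes are choice-free and sound. Yours has one genuine merit: it forces you to verify, as you do, that the factors $A_i$ inherit nonemptiness and closedness from $A$, which are hypotheses of Proposition 81 that the paper's one-line proof uses tacitly and never checks. The paper's route has the advantage of generality and economy: it isolates the fact that $\mathbb{S}^n$ is boundedly compact, so existence of nearest points holds for all nonempty closed sets with no convexity assumption, whereas your proof is tied to the product structure but never needs this local compactness of $\mathbb{S}^n$.
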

\begin{proof} Since closed balls in $(\mathbb{S}^n, \rho_{k, j})$ are compact, each nonempty closed set in $\mathbb{S}^n$ is $\rho_{k,j}$-proximinal. That $P_{\rho_{k, j}, A}(x)$ consists of at most $3^{n}$ points follows directly from Theorem 19 (i) and from Proposition 81.
\end{proof}

\begin{example}
In view of Example 80,  the set $A=\{ \oplus 0, \ominus 0, 0^{\bullet}  \}^n$ in $\mathbb{S}^n$ is box semimodule convex. Let $x_0\in\mathbb{S}^n$ be defined by: $x_0(i)=\varepsilon^{\bullet}$ for each $i\in n$. Then, if $k\in 3$ and $j\in\{1, 2\}$, the set $P_{\rho_{k, j}, A}(x_0)$ consists of exactly $3^n$ points. 
\end{example}

\begin{theorem}
Let $j,k \in\{1, 2\}$. Suppose that $A=\prod_{i\in n}A_i$ where each $A_i$ is nonempty and closed in $\mathbb{S}$. Then $A$ is $\rho_{k, j}$-Chebyshev in $\mathbb{S}^n$ if and only if  $A$ is connected. 
\end{theorem}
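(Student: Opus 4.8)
The plan is to reduce the statement to the one-dimensional characterization of Theorem 76 by means of the product formula of Theorem 19, and then to invoke the elementary, choice-free fact that a finite product is connected exactly when all of its factors are connected. Since here $j,k\in\{1,2\}$, the restriction $k\neq 0$ in Theorem 19 is respected, so no appeal to the max-type metric $\rho_0$ (which is excluded by Example 20) is needed, and the whole argument can be kept inside $\mathbf{ZF}$.

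First I would identify each factor of $\mathbb{S}^n$ with the metric space $(\mathbb{S}, d_j)$ and apply Theorem 19 with $(X_i,d_i)=(\mathbb{S},d_j)$ for every $i\in n$; under this identification the metric $\rho_k$ of Theorem 19 is precisely the metric $\rho_{k,j}$ of subsection 5.1. Part (ii) of Theorem 19 then yields at once that $A=\prod_{i\in n}A_i$ is $\rho_{k,j}$-Chebyshev in $\mathbb{S}^n$ if and only if every $A_i$ is $d_j$-Chebyshev in $\mathbb{S}$. Next, since each $A_i$ is nonempty and closed in $\mathbb{S}$, Theorem 76 applies to each factor and shows, for $j\in\{1,2\}$, that $A_i$ is $d_j$-Chebyshev if and only if $A_i$ is connected. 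Combining the two steps, $A$ is $\rho_{k,j}$-Chebyshev if and only if every $A_i$ is connected.

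It then remains to verify, in $\mathbf{ZF}$, that the finite product $A=\prod_{i\in n}A_i$ of nonempty sets is connected precisely when each $A_i$ is connected. The forward implication follows because each projection $A\to A_i$ is continuous and surjective, so a connected $A$ has connected image $A_i$. For the reverse implication I would argue by induction on $n$, using that for connected, nonempty $X$ and $Y$ the product $X\times Y$ is connected: fixing $b\in Y$, for each $x\in X$ the ``cross'' $(\{x\}\times Y)\cup(X\times\{b\})$ is the union of two connected sets meeting at $(x,b)$, hence connected; as $X\times Y$ is the union of all these crosses, each containing the common connected set $X\times\{b\}$, the product is connected. This requires no form of choice.

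Chaining the three equivalences gives the asserted statement. I expect the only point demanding genuine attention to be the bookkeeping in the first step, namely confirming that the identification of $\rho_{k,j}$ with the metric $\rho_k$ of Theorem 19 is exactly correct for all four pairs $(j,k)$ with $j,k\in\{1,2\}$; once that is granted, the remaining ingredients (Theorem 19, Theorem 76, and the finite-product connectedness lemma) are all already available in $\mathbf{ZF}$, so no deeper difficulty is anticipated.
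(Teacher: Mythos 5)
Your proposal is correct and follows essentially the same route as the paper, whose entire proof is the one-line remark that it suffices to apply Theorem 19(ii) together with the characterization of Chebyshev sets in $\mathbb{S}$ (Theorem 77 in the paper's numbering). The only difference is that you spell out the choice-free fact that a finite product of nonempty sets is connected exactly when each factor is, which the paper leaves implicit.
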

\begin{proof} It suffices to apply Theorems 19(ii) and 77.
\end{proof}

The following problem is still unsolved:
\begin{problem} Let $i\in\{1, 2\}$ and $A$ be a nonempty closed subset of $\mathbb{S}^n$ where $n>1$.  Is it true that $A$ is $D_i$-Chebyshev if and only if $A$ is geometrically convex in $\mathbb{S}^n$?
\end{problem}

Let us leave Problem 86 for future investigations in another work.

\textbf{Acknowledgement.}
This research was supported by King Abdulaziz University. The article is a part of research project initiated by Hanifa Zekraoui.

\end{document}